\documentclass[12pt]{amsart}
\usepackage[margin=1.2in]{geometry}
\usepackage{graphicx,latexsym}
\usepackage{amsfonts, amssymb, amsmath, amsthm, bm, verbatim, mathrsfs}
\usepackage{tikz}
\usetikzlibrary{matrix,arrows}
\allowdisplaybreaks
\usepackage{todonotes}

\newcommand{\N}{\mathbb{N}}
\newcommand{\Z}{\mathbb{Z}}
\newcommand{\R}{\mathbb{R}}
\newcommand{\C}{\mathbb{C}}

\newcommand{\SSS}{\mathcal{S}}

\newcommand{\DD}{\mathcal D}

\newcommand{\dx}{{\rm d}x }

\newcommand{\dt}{{\rm d}t }
\newcommand{\dxi}{{\rm d}\xi }

\newcommand{\loc}{\operatorname{loc}}
\newcommand{\beq}{\begin{eqnarray}}
\newcommand{\eeq}{\end{eqnarray}}

\newcommand{\beqs}{\begin{eqnarray*}}
\newcommand{\eeqs}{\end{eqnarray*}}

\newtheorem{theorem}{Theorem}[section]
\newtheorem{proposition}[theorem]{Proposition}
\newtheorem{lemma}[theorem]{Lemma}
\newtheorem{corollary}[theorem]{Corollary}

\theoremstyle{definition}
\newtheorem{definition}[theorem]{Definition}

\newtheorem{problem}[theorem]{Problem}
\newtheorem{examples}[theorem]{Examples}

\theoremstyle{remark}
\newtheorem{remark}[theorem]{Remark}

\numberwithin{equation}{section}

\usetikzlibrary{arrows,chains,matrix,positioning,scopes}
\makeatletter
\tikzset{join/.code=\tikzset{after node path={%
\ifx\tikzchainprevious\pgfutil@empty\else(\tikzchainprevious)%
edge[every join]#1(\tikzchaincurrent)\fi}}}
\makeatother
\tikzset{>=stealth',every on chain/.append style={join},
         every join/.style={->}}
\tikzstyle{labeled}=[execute at begin node=$\scriptstyle,
   execute at end node=$]

\begin{document}
\title[Gabor frame characterisations of modulation spaces]{Gabor frame characterisations of generalised modulation spaces}

\author[A. Debrouwere]{Andreas Debrouwere}
\thanks{A. Debrouwere was supported by  FWO-Vlaanderen through the postdoctoral grant 12T0519N}
\address{Department of Mathematics: Analysis, Logic and Discrete Mathematics\\ Ghent University\\ Krijgslaan 281\\ 9000 Gent\\ Belgium}
\email{andreas.debrouwere@UGent.be}

\author[B. Prangoski]{Bojan Prangoski}
\address{B. Prangoski, Faculty of Mechanical Engineering\\ University Ss. Cyril and Methodius \\ Karpos II bb \\ 1000 Skopje \\ Macedonia}
\email{bprangoski@yahoo.com}

\subjclass[2010]{\emph{Primary.} 42C15, 42B35, 46H25  \emph{Secondary.} 46F12 }
\keywords{Gabor frames; modulation spaces; translation-modulation invariant Banach spaces of distributions; amalgam spaces}

\begin{abstract}
We obtain Gabor frame characterisations of  modulation spaces defined via a class of translation-modulation invariant Banach spaces of distributions that was recently introduced in \cite{D-P-P-V}. We show that these spaces admit an atomic decomposition through Gabor expansions and that they are characterised by summability properties of their Gabor coefficients. Furthermore, we construct a large space of admissible windows. This generalises several fundamental results for the classical modulation spaces $M^{p,q}_{w}$. Due to the absence of solidity assumptions on the Banach spaces defining these modulation spaces, the methods used for the spaces $M^{p,q}_{w}$ (or, more generally, in coorbit space theory) fail in our setting and we develop here a new approach based on the twisted convolution.
\end{abstract}
\maketitle
\section{Introduction}
Modulation spaces, introduced by Feichtinger \cite{Feichtinger1983} in 1983,  are one of the principal objects in time-frequency analysis. Their properties were thoroughly studied by Feichtinger and Gr\"ochenig \cite{Feichtinger1983, F-G1,F-G2,F-G3, Grochenig1991}, often in the more general setting of coorbit spaces. Nowadays, they are widely accepted as an indispensable tool in various branches of analysis; see  e.g.\ \cite{ben-oh,CGN,CR,tof-na}. A key feature of the modulation spaces is that they can be described in a discrete fashion via Gabor frames. Apart from their inherent significance, such characterisations have also turned out to be very useful in applications, e.g., in the study of pseudo-differential operators; see \cite{CR} and the references therein. We refer to the monograph \cite{Grochenig} for an account of results and applications of modulation spaces.

Usually, modulation spaces are defined via weighted mixed-norm spaces \cite{Grochenig} or, more generally -- in the context of coorbit spaces -- via translation invariant solid Banach function spaces \cite{F-G1}. In \cite{D-P-P-V} a  new  class of Banach spaces was proposed to define modulation spaces: A Banach space $F$ is said to be a \emph{translation-modulation invariant Banach space of (tempered) distributions} (TMIB) on $\R^{2n}$ if $F$ satisfies the dense continuous inclusions  $\mathcal{S}(\R^{2n}) \hookrightarrow F \hookrightarrow \mathcal{S}'(\R^{2n})$, $F$ is translation and modulation invariant, and the operator norms of the translation and modulation operators on $F$ are polynomially bounded. We refer to \cite{D-P-P-V} (see also \cite{D-P-V}) for a systematic study of TMIB and their duals (called DTMIB). It is important to point out that TMIB are not necessarily solid (in the sense of \cite{F-G1}).  A natural example of a non-solid TMIB is given by $L^p \widehat{\otimes}_\pi L^p$, $1< p \leq 2$ \cite[Remark 3.10]{D-P-P-V}. The modulation space $\mathcal{M}^F$  associated to a TMIB or DTMIB $F$ on $\R^{2n}$ consists of all those tempered distributions on $\R^n$ whose short-time Fourier transform belongs to $F$.  The basic properties of these spaces were established in \cite[Section 4]{D-P-P-V}.

A natural question that arises is whether modulation spaces defined via TMIB and DTMIB may be described in terms of Gabor frames. The goal of the present paper is to give an affirmative answer to this question, namely, we show that these spaces admit an atomic decomposition through Gabor expansions and that they are characterised by summability properties of their Gabor coefficients. The significance of this lies in the fact that the modulation spaces give a scale of measurement of the regularity and decay properties of tempered distributions and such characterisations allow these properties to be quantified in a discrete way by means of Gabor coefficients.

We now describe the content of this paper in some more detail and point out the main difference between our setting and the classical one involving solid spaces. For $(x,\xi) \in \R^{2n}$, we write $\pi(x,\xi) = M_\xi T_x$, where $T_xf(t) = f(t-x)$ and $M_\xi f(t) = f(t)e^{2\pi i t \cdot \xi}$ denote the translation and modulation operators on $\R^n$. We also set $\check{f}(t) = f(-t)$. Fix a lattice $\Lambda$ in $\R^{2n}$ and a bounded open neighbourhood  $U$ of the origin in $\R^{2n}$ such that the family of sets $\{ \lambda + U \, | \, \lambda \in \Lambda \}$ is pairwise disjoint.  Given a solid TMIB $F$ on $\R^{2n}$, we associate to it the following discrete solid Banach  space on $\Lambda$ \cite[Definition 3.4]{F-G1}
$$
F_d(\Lambda) = \left\{  c = (c_\lambda)_{\lambda \in \Lambda}  \in \C^\Lambda  \, \Big| \,\sum_{\lambda \in \Lambda} c_\lambda  1_{\lambda+U} \in F \right\},
$$
where $1_{\lambda +U}$ is the characteristic function of the set $\lambda + U$, with norm $\| c \|_{F_d(\Lambda) } =  \| \sum_{\lambda \in \Lambda} |c_\lambda|  1_{\lambda+U} \|_F$. The modulation space $\mathcal{M}^F$ admits the following characterisation in terms of Gabor frames \cite{F-G1, Grochenig1991} (see  \cite[Section 12.2]{Grochenig} for the classical modulation spaces $M^{p,q}_{w} =\mathcal{M}^{L^{p,q}_{w}}$).
\begin{theorem}\label{intro} Let $F$ be a solid TMIB on $\R^{2n}$. Set
$$
\omega_F(x,\xi) = \| T_{(x,\xi)} \|_{\mathcal{L}(F)}, \qquad (x,\xi) \in \R^{2n}.
$$
Let $\psi, \gamma \in  M^{1,1}_{\max\{\omega_F,\check{\omega}_F\}}$. Then, the \emph{analysis operator}
$$
C_\psi: \mathcal{M}^F \rightarrow F_d(\Lambda), \, f \mapsto (V_\psi f(\lambda))_{\lambda \in \Lambda}
$$
and the \emph{synthesis operator}
$$
D_\gamma:  F_d(\Lambda) \rightarrow \mathcal{M}^F, \, (c_\lambda)_{\lambda \in \Lambda} \mapsto  \sum_{\lambda \in \Lambda} c_\lambda \pi(\lambda) \gamma
$$
are well-defined and continuous, and the series  $\sum_{\lambda \in \Lambda} c_\lambda \pi(\lambda) \gamma$ is unconditionally convergent in $F$ for each $c \in F_d(\Lambda)$. If in addition $(\psi,\gamma)$ is a pair of dual windows on $\Lambda$, then %
there are $A,B > 0$ such that
$$
A\| f \|_{\mathcal{M}^F} \leq \| (V_\psi f(\lambda))_{\lambda \in \Lambda} \|_{F_{d}(\Lambda)}  \leq B \| f \|_{\mathcal{M}^F}, \qquad f \in \mathcal{M}^F,
$$
and the following expansions hold
$$
f = \sum_{\lambda \in \Lambda} V_\psi f(\lambda) \pi(\lambda) \gamma = \sum_{\lambda \in \Lambda} V_\gamma f(\lambda) \pi(\lambda) \psi, \qquad f \in \mathcal{M}^F,
$$
where both series are unconditionally convergent in $F$.
\end{theorem}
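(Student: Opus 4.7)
The plan is to follow the classical Feichtinger--Gr\"ochenig coorbit-theoretic strategy, whose technical core is the pointwise convolution domination of short-time Fourier transforms: for any nonzero $\phi \in \SSS(\R^n)$,
$$
|V_\psi f(z)| \leq \|\phi\|_2^{-2}\bigl(|V_\phi f| \ast |V_\psi \phi|\bigr)(z), \qquad z \in \R^{2n}.
$$
Combined with the TMIB convolution-type inequality $\|g \ast h\|_F \leq C\|g\|_F \|h\|_{L^1_{\omega_F}}$, where $\omega_F(x,\xi) = \|T_{(x,\xi)}\|_{\mathcal{L}(F)}$, and with the solidity of $F$, this is the bridge that transfers continuous short-time Fourier information to discrete samples. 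The hypothesis $\psi, \gamma \in M^{1,1}_{\max\{\omega_F,\check\omega_F\}}$ is precisely designed to ensure that the relevant ``cross'' quantities $V_\psi \gamma$, $V_\gamma \psi$, $V_\phi \psi$, $V_\phi \gamma$ (and their reflections) lie in $L^1_{\omega_F}$, so every weighted convolution appearing below is finite.

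For the continuity of $C_\psi$, I would fix an auxiliary window $\phi \in \SSS(\R^n)$ and exploit the standard local-maximum estimate
$$
\sup_{w \in \lambda + U} |V_\psi f(w)| \leq C\bigl(|V_\phi f| \ast |V_\psi \phi|^U\bigr)(\lambda), \qquad \lambda \in \Lambda,
$$
where $|V_\psi \phi|^U(z) := \sup_{u \in U} |V_\psi \phi(z - u)|$. Multiplying by $1_{\lambda+U}$ and summing bounds $\sum_\lambda |V_\psi f(\lambda)|\, 1_{\lambda+U}$ pointwise by a translate of $|V_\phi f| \ast |V_\psi \phi|^U$. Solidity of $F$ together with the TMIB convolution inequality then yields $\|C_\psi f\|_{F_d(\Lambda)} \leq C\|V_\phi f\|_F \cdot \bigl\||V_\psi \phi|^U\bigr\|_{L^1_{\omega_F}} \leq C'\|f\|_{\mathcal{M}^F}$.

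For the synthesis operator $D_\gamma$, the covariance identity $|V_\psi(\pi(\lambda)\gamma)(z)| = |V_\psi \gamma(z - \lambda)|$ gives the majorant
$$
|V_\psi D_\gamma c|(z) \leq \sum_{\lambda \in \Lambda} |c_\lambda|\,|V_\psi \gamma(z - \lambda)|,
$$
and the right-hand side is controlled pointwise by $\bigl(\sum_\lambda |c_\lambda|\, 1_{\lambda+U}\bigr) \ast |V_\psi \gamma|^U$. Solidity of $F$ and the TMIB convolution inequality then give $\|D_\gamma c\|_{\mathcal{M}^F} \leq C \|c\|_{F_d(\Lambda)}$. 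Unconditional convergence of the synthesis series in $F$ follows because, for any finite $\Lambda_0 \subset \Lambda$, the $F$-norm of the tail $\sum_{\lambda \notin \Lambda_0} c_\lambda \pi(\lambda)\gamma$ is bounded by $\bigl\|\sum_{\lambda \notin \Lambda_0}|c_\lambda|\, 1_{\lambda+U}\bigr\|_F$, which tends to zero by solidity (the series $\sum_\lambda |c_\lambda|\, 1_{\lambda+U}$ converges unconditionally in any solid Banach space containing it).

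Assuming now that $(\psi,\gamma)$ are dual windows, I would establish the reconstruction formulas first on the dense subspace $\SSS(\R^n) \subset \mathcal{M}^F$, where the interchange of summation and duality is routine from the dual window property, and then extend to all of $\mathcal{M}^F$ using the already-proven continuities of $C_\psi$ and $D_\gamma$. The frame inequalities drop out: the upper bound is the continuity of $C_\psi$, while the lower bound follows from $\|f\|_{\mathcal{M}^F} = \|D_\gamma C_\psi f\|_{\mathcal{M}^F} \leq \|D_\gamma\|_{F_d(\Lambda) \to \mathcal{M}^F}\,\|C_\psi f\|_{F_d(\Lambda)}$. The main technical obstacle is the local-maximum estimate and ensuring it respects the weight $\omega_F$: membership in $M^{1,1}_{\max\{\omega_F,\check\omega_F\}}$ is precisely what guarantees $\bigl\||V_\psi \phi|^U\bigr\|_{L^1_{\omega_F}} < \infty$. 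Solidity is invoked at essentially every step, and it is exactly the feature absent from the setting pursued in the remainder of the paper, which explains why the authors must develop a new, twisted-convolution-based approach there.
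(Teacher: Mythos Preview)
Your proposal is correct and follows precisely the classical coorbit-theoretic route that the paper itself cites and sketches: Theorem~\ref{intro} is stated in the introduction as a known result from \cite{F-G1,Grochenig1991,Grochenig}, and the paper explicitly indicates (around equation~\eqref{eq-solid}) that the standard proof rests on the two pointwise properties $|V_\psi(\pi(z)f)|=|T_zV_\psi f|$ and $|V_\psi f|\le |(\gamma,\psi)_{L^2}|^{-1}|V_\psi f|\ast|V_\psi\gamma|$, together with solidity and the convolution inequality $\|e\ast f\|_F\le\|e\|_F\|f\|_{L^1_{\omega_F}}$---exactly the ingredients you use.

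One small point: your parenthetical justification for unconditional convergence, ``the series $\sum_\lambda |c_\lambda|\,1_{\lambda+U}$ converges unconditionally in any solid Banach space containing it,'' is too strong as written (it fails in $L^\infty$). What makes it work here is that $F$ is a \emph{TMIB}, so $\mathcal D(\R^{2n})$ is dense in $F$; one approximates $\sum_\lambda|c_\lambda|\,1_{\lambda+U}$ in $F$-norm by a compactly supported function and then uses solidity on the tail. The paper carries out exactly this argument in the unnamed lemma preceding Proposition~\ref{negative}.
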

In fact, Theorem \ref{intro} holds for  more irregular samplings sets than lattices \cite{F-G1, Grochenig1991}. The standard proof of Theorem \ref{intro}  \cite{F-G1, Grochenig1991, Grochenig} is based on the following two fundamental properties of the STFT:
\begin{equation}
|V_\psi ( \pi(x,\xi) f)| = |T_{(x,\xi)} V_\psi f|\quad \mbox{and}\quad |V_\psi f| \leq \frac{1}{|(\gamma,\psi)_{L^2}|}  |V_\psi f|\ \ast | V_\psi \gamma|,
\label{eq-solid}
\end{equation}
where $f, \psi, \gamma \in L^2(\R^n)$ with $(\gamma,\psi)_{L^2} \neq 0$; \eqref{eq-solid} may be extended to other spaces. Hence, Theorem \ref{intro} essentially reduces to prove that the mappings
$$
F \to F_d(\Lambda), \, G \mapsto (G \ast \Phi (\lambda))_{\lambda \in \Lambda} \quad \mbox{and} \quad  F_d(\Lambda) \to F, \, (c_\lambda)_{\lambda \in \Lambda} \mapsto   \sum_{\lambda \in \Lambda} c_\lambda T_\lambda \Psi
$$
are well-defined and continuous, and that the series $\sum_{\lambda \in \Lambda} c_\lambda T_\lambda \Psi$ is unconditionally convergent in $F$ for each $c \in F_d(\Lambda)$, where $\Phi,\Psi$ belong to suitable function spaces on $\R^{2n}$.

Our aim is to extend Theorem \ref{intro} to general TMIB.  However, the properties \eqref{eq-solid} are no longer applicable in this setting. The basic idea to overcome this problem is to view the STFT on $L^2(\R^{n})$ as the voice transform of the \emph{projective} representation \cite{C-D-O,Christensen}
$$
\pi : (\R^{2n},+) \rightarrow \mathcal{L}( L^2(\R^n)).
$$
The twisted translation and the twisted convolution associated to $\pi$ are given by
$$
T^\sigma_{(x,\xi)} f(t,\eta) =  f(t-x,\eta- \xi) e^{-2\pi i x\cdot (\eta-\xi)}
$$
and
$$
f \# g(t,\eta) = \iint_{\R^{2n}} f(x,\xi)  g(t-x,\eta- \xi) e^{-2\pi i x\cdot (\eta-\xi)} \dx \dxi.
$$
Then,
\begin{equation}
V_\psi ( \pi(x,\xi) f) = T^\sigma_{(x,\xi)} V_\psi f\quad \mbox{and}\quad V_\psi f =  \frac{1}{(\gamma,\psi)_{L^2} }  V_\psi f \# V_\psi \gamma,
\label{shift-eq}
\end{equation}
where  $f,\psi, \gamma \in L^2(\R^{n}) $ with $(\gamma,\psi)_{L^2} \neq 0$; \eqref{shift-eq} may be extended to other spaces.

From this point of view, it seems natural to define the discrete space associated to a TMIB $F$ via the twisted translation $T^\sigma$, i.e.,
$$
F^\sigma_d(\Lambda) = \left\{  c \in \C^\Lambda  \, \Big| \, \sum_{\lambda \in \Lambda} c_\lambda T^\sigma_\lambda \chi \in F \right\},
$$
where $\chi \in \mathcal{D}(U) \backslash \{0\}$, with norm $\| c \|_{F^B_d(\Lambda)} = \|  \sum_{\lambda \in \Lambda} c_\lambda T^\sigma_\lambda \chi  \|_{F}$. Then, $F^{\sigma}_d(\Lambda)$ is a Banach space that is independent of $\chi \in \mathcal{D}(U) \backslash \{0\}$ (Theorem \ref{basic-sequence}). Moreover, $F_d(\Lambda) =F^\sigma_d(\Lambda)$ if $F$ is solid. We shall  determine the discrete space associated to various TMIB for lattices $\Lambda = \Lambda_1 \times \Lambda_2$, where $\Lambda_1$ and $\Lambda_1$ are lattices in $\R^{n}$ (Subsection \ref{sect-examples}). Most notably,
\begin{gather}
(L^2 \widehat{\otimes}_\pi L^2)^\sigma_d (\Lambda_1 \times \Lambda_2) = \ell^1(\Lambda_1; \ell^2(\Lambda_2)), \label{examples-intro}\\
(L^2 \widehat{\otimes}_\epsilon L^2)^\sigma_d (\Lambda_1 \times \Lambda_2) = c_0(\Lambda_1; \ell^2(\Lambda_2)).\nonumber
\end{gather}

The main results of this paper (Theorem \ref{analysis-coorbit} and Corollary \ref{cor-for-rep11}) show that Theorem \ref{intro} holds for general TMIB $F$ provided that $F_d(\Lambda)$ is replaced by $F^\sigma_d(\Lambda)$, the function $\omega_F$ defining the admissible window class is changed to $\sigma_F$, where
$$
\sigma_F(x,\xi) =  \| T_{(x,\xi)}\|_{\mathcal{L}(F)} \max\{\|M_{(0,x)}\|_{\mathcal{L}(F)},1\},
$$
and the notion of unconditional convergence is weakened to convergence in the C\'esaro sense. Note that $\sigma_F = \omega_F$ if $F$ is solid. Furthermore, an example (Proposition \ref{negative}) shall show that unconditional convergence cannot longer be expected in the setting of TMIB.  We will also prove an analogue of Theorem \ref{intro} for DTMIB.
Similarly as in the solid case, but now by \eqref{shift-eq} instead of \eqref{eq-solid}, the essential problem becomes to show that the mappings
$$
F \rightarrow F^\sigma_d(\Lambda), \, G \mapsto (G \# \Phi (\lambda))_{\lambda \in \Lambda} \quad \mbox{and} \quad  F^\sigma_d(\Lambda) \rightarrow F, \, (c_\lambda)_{\lambda \in \Lambda} \mapsto   \sum_{\lambda \in \Lambda} c_\lambda T^\sigma_\lambda \Psi
$$
are well-defined and continuous, and that the series $\sum_{\lambda \in \Lambda} c_\lambda T_\lambda \Psi$ is C\'esaro summable in $F$ for each $c \in F^\sigma_d(\Lambda)$, where $\Phi,\Psi$ belong to suitable function spaces on $\R^{2n}$.

As an application, we mention that our main results may be used to give explicit descriptions of modulation spaces associated to TMIB and DTMIB. For example, \eqref{examples-intro} implies that $\mathcal{M}^{L^2 \widehat{\otimes}_\pi L^2} = \mathcal{F}M^{2,1}$ (cf.\ Corollary \ref{cor-mod-tensor}). This identity  and various related statements were recently shown in \cite{f-p-p} via different methods. We believe that our work might be used to improve some of the results from \cite{f-p-p}  and we plan to investigate this in the future (see also Problem \ref{tensor-problem}).

The paper is organised as follows. In the preliminary Sections \ref{sect-notation} and \ref{sect-TMIB}, we fix the notation and collect several results concerning TMIB and DTMIB. In Section \ref{sect-twisted}, we define and discuss the twisted translation and the twisted  convolution with respect to a real-valued $n\times n$-matrix; although we are mainly interested in $T^\sigma$ and $\#$, it will turn out that this general setting is technically more convenient. In Section \ref{sect-seq}, the technical core of this paper, we introduce and thoroughly study discrete spaces defined via a twisted translation and associated to a TMIB or DTMIB. Finally, in Section \ref{sect-Gabor}, we show our main results and discuss some applications.

\section{Notation}\label{sect-notation}
We use  standard notation from distribution theory \cite{Schwartz}. For a compact set $K \Subset \R^n$ we denote by $\mathcal{D}_K$ the Fr\'echet space of smooth functions $\varphi$ on $\R^n$ with $\operatorname{supp} \varphi \subseteq K$. Given an open set $U \subseteq \R^n$, we define
$$
\mathcal{D}(U) := \varinjlim_{K \Subset U} \mathcal{D}_K.
$$
We write $\mathcal{S}(\R^n)$ for the Fr\'echet space of rapidly decreasing smooth functions on $\R^n$ and use the following family of norms on  $\mathcal{S}(\R^n)$
$$
\| \varphi \|_{\mathcal{S}^N} := \max_{|\alpha| \leq N} \sup_{x \in \R^n} |\partial^{\alpha}\varphi(x)|(1+|x|)^N, \qquad N \in \N.
$$
The dual spaces $\mathcal{D}'(\R^n)$ and $\mathcal{S}'(\R^n)$ are the space of distributions on $\R^n$ and the space of tempered distributions on $\R^n$, respectively. Unless stated otherwise, we endow these spaces with their strong topology.

The constants in the Fourier transform are fixed as follows
$$
\mathcal{F}(f)(\xi) = \widehat{f}(\xi) := \int_{\R^n} f(x) e^{-2\pi i x\cdot \xi} \dx, \qquad f \in L^1(\R^n).
$$
The Fourier transform is a topological isomorphism from $\mathcal{S}(\R^n)$ onto itself and extends via duality to a topological isomorphism from $\mathcal{S}'(\R^n)$ onto itself. Given a Banach space  $X \subset \mathcal{S}'(\R^n)$, we define its associated Fourier space as the Banach space $\mathcal{F} X := \{ f \in  \mathcal{S}'(\R^n) \, | \, \mathcal{F}^{-1} f \in X \}$ with  norm $\| f \|_{\mathcal{F}X} := \| \mathcal{F}^{-1} f \|_X$.

The translation and modulation operators are defined as $T_xf(t) = f(t-x)$ and $M_\xi f(t) = f(t)e^{2\pi i t \cdot \xi}$,  $x,\xi \in \R^n$. They act continuously on $\mathcal{D}(\R^n)$ and  $\mathcal{S}(\R^n)$, and, by duality, therefore also on $\mathcal{D}'(\R^n)$ and $\mathcal{S}'(\R^n)$. We have that
$$
M_\xi T_x = e^{2\pi i x \cdot \xi} T_x M_\xi, \qquad \mathcal{F} T_x = M_{-x} \mathcal{F}, \qquad \mathcal{F} M_\xi = T_\xi \mathcal{F}.
$$
Furthermore, we write $\check{f}(t) = f(-t)$ for reflection about the origin.\\
\indent Let $\Omega$ be a locally compact, $\sigma$-compact Hausdorff space and let $(\Omega,\Sigma,\mu)$ be a measure space with $\mu$ a positive locally finite Borel measure. A Banach space $E$ is called a \emph{solid Banach function space} on $\Omega$ (cf.\ \cite{F-G1}) if $E \subset L^1_{\operatorname{loc}}(\Omega)$ with continuous inclusion and $E$ satisfies the following condition:
$$
\forall f \in E \, \forall g \in L^1_{\loc}(\Omega) \, : \; |g| \leq |f| \mbox{ a.e.} \Rightarrow g \in E \mbox{ and } \|g\|_E \leq \| f \|_E.
$$
\indent Throughout the article, $C, C', \ldots$ denote absolute constants that may vary from place to place.

\section{Translation-modulation invariant Banach spaces of distributions and their duals}\label{sect-TMIB}
\subsection{Definition and basic properties} We start with the following basic  definition from \cite{D-P-P-V}.
\begin{definition} \label{Def-1} A Banach space $E$ is called a \emph{translation-modulation invariant Banach space of distributions} (TMIB) on $\R^n$ if the following three conditions hold:
\begin{itemize}
\item[$(i)$] $E$ satisfies the dense continuous inclusions $\mathcal{S}(\R^n) \hookrightarrow E \hookrightarrow \mathcal{S}'(\R^n)$.
\item[$(ii)$] $T_x(E) \subseteq E$ and $M_\xi(E) \subseteq E$ for all $x, \xi \in \R^n$.
\end{itemize}
\begin{itemize}
\item[$(iii)$] There exist $\tau_j, C_j > 0$, $j=0,1$, such that
\begin{equation}
\omega_E(x) := \| T_{x} \|_{\mathcal{L}(E)} \leq C_0(1+|x|)^{\tau_0} \quad \mbox{and} \quad \nu_E(\xi) := \| M_{-\xi} \|_{\mathcal{L}(E)} \leq  C_1(1+|\xi|)^{\tau_1};
\label{twf}
\end{equation}
for $x, \xi \in \R^n$ fixed, the mappings $T_x: E \rightarrow E$ and $M_\xi: E \rightarrow E$  are continuous  by the closed graph theorem.
\end{itemize}
\end{definition}
\noindent In what follows, the constants  $\tau_j, C_j > 0$,  $j=0,1$,  will always refer to those occurring in \eqref{twf}.

Let $E$ be a TMIB. Then, $E$ is separable and, for $e \in E$ fixed, the mappings
\begin{equation}
\R^n \rightarrow E, \, x \mapsto T_x e \qquad \mbox{and} \qquad \R^n \rightarrow E, \, \xi \mapsto M_\xi e
\label{orbit}
\end{equation}
are continuous. The functions $\omega_E$ and $\nu_E$ are Borel measurable (as $E$ is separable) and submultiplicative.

An interesting feature of TMIB is that they are stable under taking completed tensor products with respect to the $\pi$- and $\epsilon$-topology \cite{ryan}. Namely, let $E_j$ be a TMIB on $\R^{n_j}$ for $j = 1,2$. Let $\tau$ denote either $\pi$ or $\epsilon$. Then, \cite[Theorem 3.6]{D-P-P-V} (and \cite[Lemma 2.3]{f-p-p} for $\tau = \pi$) yields that $E_1 \widehat{\otimes}_{\tau} E_2$ is a TMIB on $\R^{n_1+n_2}$ with $\omega_{E_1\widehat{\otimes}_{\tau} E_2}=\omega_{E_1}\otimes \omega_{E_2}$ and $\nu_{E_1\widehat{\otimes}_{\tau} E_2}=\nu_{E_1}\otimes \nu_{E_2}$.\\
\indent Next, we introduce dual translation-modulation invariant Banach spaces of distributions \cite{D-P-P-V}.
\begin{definition} \label{Def-2} A Banach space is called a  \emph{dual translation-modulation invariant Banach space of distributions} (DTMIB) on $\R^n$ if it is the strong dual of a TMIB on $\R^n$.
\end{definition}
Let $E$ be a DTMIB. Then, $E$ satisfies the continuous inclusions $\mathcal{S}(\R^n) \rightarrow E \rightarrow \mathcal{S}'(\R^n)$ and the conditions $(ii)$ and $(iii)$ from Definition \ref{Def-1}. If $E = E'_0$, where $E_0$ is a TMIB, then $\omega_E = \check{\omega}_{E_0}$ and $\nu_E = \nu_{E_0}$, whence $\omega_E$ and $\nu_E$ are Borel measurable. Moreover, for $e \in E$ fixed, the mappings in \eqref{orbit} are continuous with respect to  the weak-$\ast$ topology on $E$. In general, $E$ is not a TMIB. More precisely, the inclusion $\mathcal{S}(\R^n) \rightarrow E$ need not be dense and the mappings in \eqref{orbit} may fail to be continuous; consider, e.g., $E = L^\infty$. However, if $E$ is reflexive, then $E$ is in fact a TMIB \cite[Proposition 3.14]{D-P-V} (see also \cite[p.\ 827]{D-P-P-V}).

We now give some examples of TMIB and DTMIB; see also \cite[Section 3]{D-P-P-V}.
\begin{examples}\label{examples-TMIB}
$(i)$  A Banach space $E$ is called a \emph{solid TMIB (DTMIB)} on $\R^n$ if $E$ is both a TMIB (DTMIB) and a solid Banach function space on $\R^n$ (with respect to the Lebesgue measure). Then, $\| M_\xi e \|_E = \| e \|_E$ for all $e \in E$ and $\xi \in \R^n$.
A measurable function $w: \R^n \to (0,\infty)$ is called a \emph{polynomially bounded weight function} on $\R^n$ if there are $C,\tau > 0$ such that
$$
w(x+y) \leq Cw(x)(1+|y|)^{\tau}, \qquad x,y \in \R^n.
$$
For $1 \leq p \leq \infty$ we define $L^p_w = L^p_w(\R^n)$ as the Banach space consisting of all (equivalence classes of) measurable functions $f$ on $\R^n$ such that $\| f\|_{L^p_w} := \| f w \|_{L^p} < \infty$. We define $C_{0,w} = C_{0,w}(\R^n)$ as the closed subspace of $L^{\infty}_w$ consisting of all $f\in C(\R^n)$ such that $\lim_{|x| \to \infty} f(x)w(x) = 0$. Then, $L^p_w$, $1 \leq p < \infty$, is a solid TMIB, $L^{p}_w$, $1 < p \leq \infty$, is a solid DTMIB, and $C_{0,w}$ is a TMIB. Similarly, we may consider weighted mixed-norm spaces. Let  $w$  be a polynomially bounded weight function on $\R^{n_1+n_2}$. For $1 \leq p_1, p_2 \leq \infty$ we define $L^{p_1,p_2}_{w} = L^{p_1,p_2}_{w}(\R^{n_1+n_2})$ as the Banach space consisting of all (equivalence classes of) measurable functions $f$ on $\R^{n_1+n_2}$ such that $\| f\|_{L^{p_1,p_2}_{w}} := \| f w \|_{L^{p_1,p_2}}$. Then, $L^{p_1,p_2}_{w}$ is a solid TMIB if $1 \leq p_1, p_2 < \infty$ and a solid DTMIB if $1 < p_1, p_2 \leq \infty$.
\\
\noindent $(ii)$ Let $E$ be a TMIB (DTMIB). Then, $\mathcal{F}E$ is a TMIB (DTMIB) with $\omega_{\mathcal{F}E} = \check{\nu}_E$ and $\nu_{\mathcal{F}E} = \omega_E$. If $E$ is solid, we have that $\| T_x e \|_{\mathcal{F}E} = \| e \|_{\mathcal{F}E}$   for all $e \in \mathcal{F}E$ and $x \in \R^n$. The Sobolev spaces $\mathcal{F}L^p_w$, with $w$  a polynomially bounded weight function on $\R^{n}$, are of this type.
\\
\noindent $(iii)$  Let $w$ be a polynomially bounded weight function on $\R^{n_1}$ and let $E$ be a TMIB on $\R^{n_2}$. Then, the weighted Bochner-Lebesgue space $L^p_w(E) = L^p_w(\R^{n_1};E)$, $1 \leq p < \infty$, and the weighted vector-valued $C_0$-space $C_{0,w}(E) = C_{0,w}(\R^{n_1};E)$ are TMIB on $\R^{n_1+n_2}$. If $E'$ satisfies the Radon-Nikod\'ym property (in particular, if $E$ is reflexive), then (cf. \cite[Theorem 3.5]{Chaney})
$$
L^p_w(E') = (L^q_{1/w}(E))', \qquad 1 < p \leq \infty,
$$
where $q$ denotes the H\"older conjugate index to $p$. In particular, $L^p_w(E')$, $1< p \leq \infty$, is a DTMIB on $\R^{n_1+n_2}$. 
\\
\noindent $(iv)$ The spaces $L^{p_1}(\R^{n_1})  \widehat{\otimes}_{\pi} L^{p_2}(\R^{n_2})$ and  $L^{p_1}(\R^{n_1})  \widehat{\otimes}_{\epsilon} L^{p_2}(\R^{n_2})$, $1 \leq p_1, p_2 < \infty$, are TMIB on $\R^{n_1+n_2}$ consisting of locally integrable functions. In \cite[Remark 3.10]{D-P-P-V} it is shown that $L^{p}(\R^{n})  \widehat{\otimes}_\pi L^{p}(\R^{n})$, $1 < p \leq 2$, is not solid.
\end{examples}

\subsection{Convolution and multiplication}\label{subs-conv-mult}
Every TMIB or DTMIB $E$ is a Banach convolution module over the Beurling algebra $L^1_{\omega_E}$ and a Banach multiplication module over the Wiener-Beurling algebra $\mathcal{F}L^1_{\nu_E}$\footnote{The Wiener-Beurling algebra $\mathcal{F}L^1_{\nu_E}$ is sometimes denoted as $A_{\nu_E}$.}. More precisely, if $E$ is a TMIB, the convolution $\ast : \mathcal{S}(\R^n) \times \mathcal{S}(\R^n)  \rightarrow \mathcal{S}(\R^n)$ and multiplication $\cdot : \mathcal{S}(\R^n) \times \mathcal{S}(\R^n)  \rightarrow \mathcal{S}(\R^n)$ extend uniquely to continuous bilinear mappings $\ast : E \times L^1_{\omega_E}  \rightarrow E$ and  $\cdot : E \times \mathcal{F}L^1_{\nu_E}  \rightarrow E$ such that
\begin{equation}
\|e \ast f \|_E\leq \|e\|_E \|f\|_{L^1_{\omega_E}}, \qquad e \in E,\, f \in L^1_{\omega_E},
\label{norm-1}
\end{equation}
and
\begin{equation}
\|e\cdot f\|_E\leq \|e\|_E \|f\|_{\mathcal{F}L^1_{\nu_E}}, \qquad e \in E,\, f \in \mathcal{F}L^1_{\nu_E}.
\label{norm-2}
\end{equation}
Moreover, the following integral representations hold
\begin{equation}
e \ast f=\int_{\R^n} T_xef(x) \dx,\qquad  e \in E,\, f \in L^1_{\omega_E},
\label{int-1}
\end{equation}
and
\begin{equation}
e \cdot f =\int_{\R^n} M_{-x}e \mathcal{F}^{-1}f(x) \dx,  \qquad e \in E,\, f \in \mathcal{F}L^1_{\nu_E},
\label{int-22}
\end{equation}
where the integrals should be interpreted as  $E$-valued Bochner integrals  \cite[Proposition 3.2]{D-P-P-V}. Next, suppose  that $E$ is a DTMIB with $E = E_0'$, where $E_0$ is a TMIB. The convolution and multiplication on $E$ are defined via duality, namely, for $e \in E$, $f \in L^1_{\omega_E}$, and $g \in \mathcal{F}L^1_{\nu_E}$, we set
$$
\langle e \ast f, g \rangle := \langle e, g \ast \check{f} \rangle, \qquad g \in E_0,
$$
and
$$
\langle e \cdot f, g \rangle := \langle e, g \cdot f \rangle, \qquad g \in E_0.
$$
Then, the inequalities \eqref{norm-1} and \eqref{norm-2} hold true and the integral representations \eqref{int-1} and \eqref{int-22} are valid if the integrals are interpreted as $E$-valued Pettis integrals with respect to the weak-$\ast$ topology on $E$  \cite[Corollary 3.5]{D-P-P-V}. Hence, TMIB and DTMIB may be viewed as Banach spaces of distributions having two module structures in the sense of \cite{brfe83}.\\
\indent The goal of this subsection is to extend the previous results by showing that TMIB and DTMIB are in fact  Banach convolution and multiplication  modules  over a certain weighted space of Radon measures and its associated Fourier space, respectively. Our approach is based on the integral representations \eqref{int-1} and \eqref{int-22}. The following lemma will allow us to treat TMIB and DTMIB simultaneously. Its proof is standard and therefore we omit it.

\begin{lemma}\label{weak-Bochner} Let $X_0$ be a separable Banach space and set $X = X'_0$. Let $(\Omega,\Sigma,\mu)$ be a  measure space with $\mu$ a complex measure.  Let ${\bf f}: \Omega \rightarrow X$ be weak-$\ast$  measurable, i.e., the function $\Omega \rightarrow \C, \, x \mapsto \langle \mathbf{f} (x), g \rangle$ is measurable for every $g \in X_0$. Furthermore, suppose that
 \begin{equation}
\int_{\Omega} \| \mathbf{f} (x)  \|_X {\rm d}|\mu|(x) < \infty.
\label{w-int-1}
\end{equation}
Then, ${\bf f}: \Omega \rightarrow X$ is Pettis integrable with respect to the weak-$\ast$ topology on $X$ and
\begin{equation}
\left \|\int_{\Omega} \mathbf{f} (x) {\rm d}\mu(x) \right \|_X   \leq  \int_{\Omega} \| \mathbf{f} (x)  \|_X {\rm d}|\mu|(x).
\label{w-int-2}
\end{equation}
\end{lemma}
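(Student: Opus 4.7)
The plan is to construct the weak-$\ast$ Pettis integral directly via duality, exploiting the separability of $X_0$ to handle measurability issues. Since the alleged integral must be an element of $X = X_0'$, the natural candidate is the linear functional on $X_0$ defined by
\[
L(g) := \int_{\Omega} \langle \mathbf{f}(x), g \rangle \, {\rm d}\mu(x), \qquad g \in X_0,
\]
and the whole argument will consist in showing that this is well-defined, bounded, and equals the desired integral with the claimed norm bound.

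First, I would verify that the scalar function $x \mapsto \|\mathbf{f}(x)\|_X$ is measurable. This is where separability of $X_0$ is essential: if $\{g_n\}_{n \in \N}$ is a countable dense subset of the closed unit ball of $X_0$, then, by the definition of the dual norm,
\[
\|\mathbf{f}(x)\|_X = \sup_{n \in \N} |\langle \mathbf{f}(x), g_n \rangle|,
\]
which is a countable supremum of measurable functions and hence measurable. In particular, the hypothesis \eqref{w-int-1} makes sense, and for every $g \in X_0$ the estimate $|\langle \mathbf{f}(x), g \rangle| \leq \|\mathbf{f}(x)\|_X \|g\|_{X_0}$ combined with \eqref{w-int-1} shows that $x \mapsto \langle \mathbf{f}(x), g\rangle$ is $|\mu|$-integrable (and thus $\mu$-integrable). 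Hence $L(g)$ is a well-defined complex number for each $g \in X_0$.

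Next, $L$ is clearly linear, and the same estimate yields
\[
|L(g)| \leq \int_{\Omega} |\langle \mathbf{f}(x), g \rangle|\, {\rm d}|\mu|(x) \leq \|g\|_{X_0} \int_{\Omega} \|\mathbf{f}(x)\|_X \, {\rm d}|\mu|(x),
\]
so $L$ is a bounded linear functional on $X_0$. By the duality $X = X_0'$, there is a unique element $F \in X$ such that $\langle F, g \rangle = L(g)$ for all $g \in X_0$, with $\|F\|_X \leq \int_\Omega \|\mathbf{f}(x)\|_X \, {\rm d}|\mu|(x)$. By construction, $F$ satisfies
\[
\langle F, g \rangle = \int_{\Omega} \langle \mathbf{f}(x), g \rangle \, {\rm d}\mu(x), \qquad g \in X_0,
\]
which is exactly the defining property of the weak-$\ast$ Pettis integral of $\mathbf{f}$ with respect to $\mu$; setting $\int_\Omega \mathbf{f}(x)\, {\rm d}\mu(x) := F$ then yields both conclusions simultaneously.

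There is no real obstacle here: the only mildly delicate point is the measurability of $\|\mathbf{f}(\cdot)\|_X$, which is not automatic for general weak-$\ast$ measurable vector-valued maps but becomes straightforward once separability of $X_0$ is invoked. The rest is a routine application of the Riesz-type identification of bounded linear functionals on $X_0$ with elements of $X$, which is why the authors omit the proof.
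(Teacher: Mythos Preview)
Your argument is correct and is exactly the standard one: use separability of $X_0$ to get measurability of $x\mapsto\|\mathbf{f}(x)\|_X$, define the candidate integral as the bounded linear functional $g\mapsto\int_\Omega\langle\mathbf{f}(x),g\rangle\,{\rm d}\mu(x)$ on $X_0$, and read off both existence and the norm bound. The paper omits the proof entirely as standard, so there is nothing further to compare.
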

\noindent We will use Lemma \ref{weak-Bochner} without explicitly referring to it.

Let $\omega: \R^n \rightarrow [1,\infty)$ be a Borel measurable submultiplicative polynomially bounded function. We denote by $\mathcal{M}^1_{\omega} =\mathcal{M}^1_{\omega}(\R^n)$ the Banach space consisting of all complex Radon measures $\mu$ on $\R^n$ such that $\| \mu\|_{\mathcal{M}^1_{\omega}}  := \int_{\R^n} \omega(x) {\rm d}|\mu|(x) < \infty$. The space $\mathcal{M}^1_{\omega} \subset \SSS'(\R^n)$ is a Banach convolution module and its associated Fourier space $\mathcal{F}\mathcal{M}^1_{\omega}$ is a Banach multiplication module if the multiplication is defined via the Fourier transform and the convolution in $\mathcal{M}^1_{\omega}$. Since $\mathcal{M}^1_{\omega} \subseteq \mathcal{M}^1$, the elements of $\mathcal{F}\mathcal{M}^1_{\omega}$ are bounded continuous functions and the multiplication defined above coincides with the ordinary multiplication of continuous functions.

Let $E$ be a TMIB or a DTMIB and set $\widetilde{\omega}_E=\max\{1,\omega_E\}$. We define the convolution of $e \in E$ and $\mu \in \mathcal{M}^1_{\widetilde{\omega}_E}$ as
$$
e \ast \mu := \int_{\R^n} T_x e \,  {\rm d}\mu(x) \in E,
$$
where the integral should be interpreted as an $E$-valued Bochner integral if $E$ is a TMIB and as an $E$-valued Pettis integral with respect to the weak-$\ast$ topology on $E$ if $E$ is a DTMIB; hereafter, for DTMIB $E$, $E$-valued Pettis integrals will always be meant with respect to the weak-$\ast$ topology on $E$ (cf.\ Lemma \ref{weak-Bochner}). Hence, $\ast: E \times \mathcal{M}^1_{\widetilde{\omega}_E} \rightarrow E$ is a continuous bilinear mapping such that
$$
\| e \ast \mu \|_{E} \leq \|e\|_{E} \|\mu\|_{\mathcal{M}^1_{\widetilde{\omega}_E}}, \qquad e \in E,\, \mu \in \mathcal{M}^1_{\widetilde{\omega}_E}.
$$
If ${\rm d}\mu(x)=f(x)\dx$ with $f\in L^1_{\widetilde{\omega}_E}$, this definition of convolution coincides with the one given at the beginning of the subsection. Furthermore, if
$$
\int_{\R^n} (1+|x|)^N {\rm d}|\mu|(x)<\infty, \qquad \forall N \in \N,
$$
then $\mu \in \mathcal{O}'_C(\R^n)$  \cite[p.\ 244]{Schwartz} and
$$
\langle e \ast \mu, \varphi \rangle= \langle e, \varphi \ast \check{\mu} \rangle, \qquad \varphi \in \mathcal{S}(\R^n),
$$
whence $e \ast \mu$ is equal to the $\mathcal{S}'(\R^n) \times \mathcal{O}'_C(\R^n)$-convolution of $e$ and $\mu$ \cite[Theor\`eme XI, p.\ 247]{Schwartz}. Next, we consider multiplication. Set $\widetilde{\nu}_E =  \max \{ 1, \nu_E\}$.  We define the multiplication  of $e \in E$ and $f \in \mathcal{F}\mathcal{M}^1_{ \widetilde{\nu}_E}$ as
$$
e \cdot f := \int_{\R^n} M_{-x} e \,  {\rm d} \mathcal{F}^{-1}f(x);
$$
the integral should be interpreted as an $E$-valued Bochner integral if $E$ is a TMIB and as an $E$-valued Pettis integral if $E$ is a DTMIB. Hence, $\cdot: E \times \mathcal{F}\mathcal{M}^1_{ \widetilde{\nu}_E} \rightarrow E$ is a continuous bilinear mapping such that
$$
\| e \cdot f \|_{E} \leq \|e\|_{E} \|f\|_{\mathcal{F}\mathcal{M}^1_{\widetilde{\nu}_E}}, \qquad e \in E,\, f \in \mathcal{F} \mathcal{M}^1_{ \widetilde{\nu}_E}.
$$
If $f\in \mathcal{F}L^1_{\widetilde{\nu}_E}$, this definition of multiplication coincides with the one given at the beginning of the subsection. Furthermore, if
$$
\int_{\R^n} (1+|x|)^N {\rm d} |\mathcal{F}^{-1}f| (x)<\infty, \qquad \forall N \in \N,
$$
then $f \in \mathcal{O}_M(\R^n)$  \cite[p.\ 243]{Schwartz} and
$$
\langle e \cdot f, \varphi \rangle= \langle e, \varphi \cdot f  \rangle, \qquad \varphi \in \mathcal{S}(\R^n),
$$
whence $e \cdot f$ is equal to the $\mathcal{S}'(\R^n) \times \mathcal{O}_M(\R^n)$-multiplication of $e$ and $f$ \cite[Theor\`eme X, p.\ 246]{Schwartz}. Suppose that $E$ is a DTMIB with $E = E_0'$, where $E_0$ is a TMIB. For $e \in E$, $\mu \in \mathcal{M}^1_{\widetilde{\omega}_E}$, and  $f \in \mathcal{F} \mathcal{M}^1_{\widetilde{\nu}_E}$ it holds that
$$
\langle e \ast \mu, g \rangle = \langle e, g \ast \check{\mu} \rangle\quad \mbox{and} \quad \langle e \cdot f, g \rangle = \langle e, g \cdot f \rangle, \qquad g \in E_0.
$$

Every solid Banach function space  is a Banach multiplication module over $L^\infty$. We now use the previous observations to formulate a result that, for our purposes, will turn out to be the suitable analogue of this fact for TMIB and DTIMB.
We first need to introduce some  terminology.  A lattice $\Lambda$  is a discrete subgroup of $\R^n$ that spans the real vector space $\R^n$. There is a unique invertible $n\times n$-matrix $A_\Lambda$ such that $\Lambda = A_\Lambda \Z^n$. The dual lattice of $\Lambda$ is defined as $\Lambda^\perp = (A_\Lambda^t)^{-1} \Z^n =  \{ \mu \in \R^n \, | \, \lambda \cdot \mu \in \Z, \, \forall \lambda \in \Lambda \}$. We define $ I_\Lambda := A_\Lambda[0,1)^n$ and $\operatorname*{vol}(\Lambda) := | I_\Lambda |  =  | \det A_\Lambda|$.

\begin{lemma}\label{mea-spa-sub}
Let $\omega: \R^n \rightarrow [1,\infty)$ be a Borel measurable submultiplicative polynomially bounded function. Let $\Lambda$ be a lattice in $\R^n$. Then, for every $y\in\R^n$, the bilinear mapping
$$
\mathcal{F}L^1_{\omega}\times \SSS(\R^n)\rightarrow \mathcal{F}\mathcal{M}^1_{\omega},\, (f,\varphi)\mapsto \sum_{\lambda\in\Lambda}e^{2\pi i y\cdot\lambda}T_{\lambda}(f \varphi),
$$
is well-defined and continuous. Furthermore, there are $C > 0$ and $N \in \N$ such that
$$
\sup_{y \in \R^n}\left\|\sum_{\lambda\in\Lambda}e^{2\pi i y\cdot\lambda}T_{\lambda}(f \varphi)\right\|_{\mathcal{F}\mathcal{M}^1_{\omega}}\leq C\|f\|_{\mathcal{F}L^1_{\omega}} \|\varphi\|_{\SSS^N},\quad f\in\mathcal{F}L^1_{\omega},\,\varphi\in\SSS(\R^n).
$$
\end{lemma}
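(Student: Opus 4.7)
The plan is to identify the tempered distribution $G_y := \sum_{\lambda\in\Lambda} e^{2\pi i y\cdot\lambda} T_\lambda(f\varphi)$ with the Fourier transform of an explicit discrete measure concentrated on the translated dual lattice $-y+\Lambda^\perp$, and then to control the $\mathcal{M}^1_\omega$-norm of that measure directly. This reduces the whole lemma to a weighted summability estimate on $\Lambda^\perp$.

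For the identification, I would use the classical Poisson summation formula. Setting $F := \mathcal{F}^{-1}f \in L^1_\omega$ and $\Phi := \mathcal{F}^{-1}\varphi \in \SSS(\R^n)$, one has $\mathcal{F}^{-1}(f\varphi) = F * \Phi$. Since $f$ is bounded continuous (with $\|f\|_\infty \leq \|F\|_{L^1}$), the product $h := f\varphi$ inherits the rapid decay of $\varphi$, and for any $\psi \in \SSS(\R^n)$ the function $g(\lambda) := \langle T_\lambda h, \psi\rangle = \int h(x-\lambda)\psi(x)\,\dx$ lies in $\SSS(\R^n)$ (convolution of a rapidly decreasing function with a Schwartz function); a direct computation yields $\hat g(\xi) = (F*\Phi)(\xi)\hat\psi(\xi)$. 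Applying Poisson summation to $M_y g \in \SSS(\R^n)$ then gives
$$\langle G_y, \psi\rangle = \sum_{\lambda\in\Lambda} e^{2\pi i y\cdot\lambda} g(\lambda) = \frac{1}{\operatorname{vol}(\Lambda)}\sum_{\mu\in\Lambda^\perp}(F*\Phi)(\mu-y)\,\hat\psi(\mu-y) = \langle \mathcal{F}\nu_y,\psi\rangle,$$
where $\nu_y := \tfrac{1}{\operatorname{vol}(\Lambda)}\sum_{\mu\in\Lambda^\perp}(F*\Phi)(\mu-y)\,\delta_{\mu-y}$. Thus $G_y = \mathcal{F}\nu_y$ in $\SSS'(\R^n)$, and the lemma reduces to showing that $\nu_y \in \mathcal{M}^1_\omega$ with norm bounded by the desired product of seminorms, uniformly in $y$.

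The estimate then proceeds by using the submultiplicativity $\omega(\mu-y) \leq \omega(\mu-y-t)\omega(t)$ inside the integral defining the convolution, and swapping sum and integral, to get
$$\|\nu_y\|_{\mathcal{M}^1_\omega} \leq \frac{1}{\operatorname{vol}(\Lambda)}\int_{\R^n}|F(t)|\omega(t)\Bigg(\sum_{\mu\in\Lambda^\perp}|\Phi(\mu-y-t)|\omega(\mu-y-t)\Bigg)\dt.$$
Since $\mathcal{F}^{-1}:\SSS(\R^n)\to\SSS(\R^n)$ is continuous and $\omega$ is polynomially bounded, for $N$ sufficiently large one has $|\Phi(x)|\omega(x) \leq C\|\varphi\|_{\SSS^N}(1+|x|)^{-n-1}$. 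A standard comparison of monotone lattice sums with the corresponding integral yields $\sup_{s\in\R^n}\sum_{\mu\in\Lambda^\perp}(1+|\mu-s|)^{-n-1} < \infty$, so the inner sum is controlled by a constant multiple of $\|\varphi\|_{\SSS^N}$ uniformly in $y$ and $t$. Substituting back produces $\|\nu_y\|_{\mathcal{M}^1_\omega} \leq C'\|f\|_{\mathcal{F}L^1_\omega}\|\varphi\|_{\SSS^N}$, uniformly in $y$.

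The main technical delicacy will be in justifying the Poisson step, since $f$ is only known to lie in $\mathcal{F}L^1_\omega$ and need not be smooth. The saving observation is that $f$ is nonetheless bounded and continuous, so the Schwartz-type decay entirely comes from $\varphi$, and any missing smoothness is supplied by convolving with $\psi \in \SSS(\R^n)$. Once $g \in \SSS(\R^n)$ is established, the subsequent weighted estimate, the submultiplicative weight shift, and the uniform lattice-sum bound are all routine.
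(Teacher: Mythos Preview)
Your proposal is correct and follows essentially the same approach as the paper: both arguments apply the Poisson summation formula to rewrite $\sum_{\lambda\in\Lambda}e^{2\pi i y\cdot\lambda}T_\lambda(f\varphi)$ as the Fourier transform of a discrete measure supported on a translate of $\Lambda^\perp$, then estimate the $\mathcal{M}^1_\omega$-norm of that measure by splitting the weight via submultiplicativity and bounding the resulting lattice sum uniformly using the Schwartz decay of $\mathcal{F}^{-1}\varphi$. The only cosmetic differences are that the paper works with $\mathcal{F}$ and $\check\omega$ while you work with $\mathcal{F}^{-1}$ and $\omega$ (equivalent by reflection), and you spell out the distributional justification of Poisson by testing against $\psi\in\mathcal{S}(\R^n)$ whereas the paper invokes it directly in $\mathcal{S}'(\R^n)$.
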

\begin{proof}
Let $f\in\mathcal{F}L^1_{\omega}$, $\varphi\in\SSS(\R^n)$, and $y \in \R^n$ be arbitrary. The Poisson summation formula implies that
\begin{equation}
\mathcal{F}^{-1}\left(\sum_{\lambda\in\Lambda}e^{2\pi i y\cdot \lambda}T_{\lambda}(f \varphi)\right)= \frac{1}{\operatorname*{vol}(\Lambda)} \sum_{\mu\in\Lambda^{\perp}} \widehat{f\varphi}(\mu+y)T_{-\mu-y}\delta \quad \mbox{in } \SSS'(\R^n).
\label{identity-cont}
\end{equation}
Hence,
\begin{align*}
\left\|\sum_{\lambda\in\Lambda}e^{2\pi i y\cdot\lambda} T_{\lambda}(f\varphi)\right\|_{\mathcal{F}\mathcal{M}^1_{\omega}}&= \frac{1}{\operatorname*{vol}(\Lambda)} \sum_{\mu\in\Lambda^{\perp}}|\widehat{f}\ast \widehat{\varphi}(\mu+y)|\check{\omega}(\mu+y)\\
&\leq \frac{1}{\operatorname*{vol}(\Lambda)} \int_{\R^n}|\widehat{f}(y-x)|\check{\omega}(y-x)\sum_{\mu\in\Lambda^{\perp}} |\widehat{\varphi}(\mu+x)|\check{\omega}(\mu+x)\dx\\
&\leq C\|\widehat{\varphi}\|_{L^{\infty}_{(1+|\cdot|)^{n+1}\check{\omega}}} \|\mathcal{F}f\|_{L^1_{\check{\omega}}}.
\end{align*}
As the Fourier transform is an isomorphism from $\mathcal{S}(\R^n)$ onto itself and $\|\mathcal{F}f\|_{L^1_{\check{\omega}}}= \|f\|_{\mathcal{F}L^1_{\omega}}$, this completes the proof.
\end{proof}

\begin{corollary}\label{mult-new}
Let $\Lambda$ be a lattice in $\R^n$ and let $\varphi \in \mathcal{S}(\R^n)$.
Then, for every $y\in\R^n$, the bilinear mapping
$$
E \times \mathcal{F}L^1_{\widetilde{\nu}_E} \rightarrow E,\, (e,f) \mapsto e \cdot \sum_{\lambda\in\Lambda}e^{2\pi i y\cdot\lambda}T_{\lambda}(f \varphi),
$$
is well-defined and continuous. Furthermore, there is $C > 0$ such that
$$
\sup_{y \in \R^n}\left\| e \cdot \sum_{\lambda\in\Lambda}e^{2\pi i y\cdot\lambda}T_{\lambda}(f \varphi)\right\|_{E} \leq C \| e \|_E \|f\|_{\mathcal{F}L^1_{\widetilde{\nu}_E}}, \qquad  e \in E,\, f\in\mathcal{F}L^1_{\widetilde{\nu}_E}.
$$
\end{corollary}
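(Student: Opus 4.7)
The plan is to reduce the corollary to Lemma \ref{mea-spa-sub} via the multiplication module action of $\mathcal{F}\mathcal{M}^1_{\widetilde{\nu}_E}$ on $E$ developed in Subsection \ref{subs-conv-mult}.

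I would first apply Lemma \ref{mea-spa-sub} with the weight $\omega := \widetilde{\nu}_E$. Verifying that $\omega$ meets the hypotheses of the lemma is routine: $\nu_E$ is Borel measurable and submultiplicative as recorded after Definition \ref{Def-1}, and \eqref{twf} ensures polynomial boundedness; all three properties persist for $\widetilde{\nu}_E = \max\{1,\nu_E\}$. The lemma then supplies constants $C_1 > 0$ and $N \in \N$ such that
$$
\sup_{y \in \R^n}\left\|\sum_{\lambda\in\Lambda}e^{2\pi i y\cdot\lambda}T_{\lambda}(f \varphi)\right\|_{\mathcal{F}\mathcal{M}^1_{\widetilde{\nu}_E}}\leq C_1\|f\|_{\mathcal{F}L^1_{\widetilde{\nu}_E}} \|\varphi\|_{\SSS^N}
$$
for every $f \in \mathcal{F}L^1_{\widetilde{\nu}_E}$. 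In particular, the series on the left converges in $\mathcal{F}\mathcal{M}^1_{\widetilde{\nu}_E}$, so that $e\cdot \sum_{\lambda}e^{2\pi i y\cdot\lambda}T_{\lambda}(f\varphi)$ is a well-defined element of $E$ via the multiplication module action.

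Next, I would invoke the inequality
$$
\|e\cdot g\|_E \leq \|e\|_E\|g\|_{\mathcal{F}\mathcal{M}^1_{\widetilde{\nu}_E}}, \qquad e\in E,\, g\in \mathcal{F}\mathcal{M}^1_{\widetilde{\nu}_E},
$$
from Subsection \ref{subs-conv-mult}, applied to $g = \sum_{\lambda}e^{2\pi i y\cdot\lambda}T_{\lambda}(f\varphi)$. Combining it with the previous estimate yields the desired bound with $C := C_1\|\varphi\|_{\SSS^N}$, which is finite and independent of $y$. Well-definedness and continuity of the bilinear map on $E \times \mathcal{F}L^1_{\widetilde{\nu}_E}$ follow immediately from this uniform bound.

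Since the corollary is essentially a packaging of Lemma \ref{mea-spa-sub} together with the multiplication module action, I do not anticipate any substantive obstacle. The only point requiring a brief verification is the admissibility of $\widetilde{\nu}_E$ as a weight in the lemma, which was addressed above.
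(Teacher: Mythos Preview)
Your proposal is correct and matches the paper's intended derivation: the corollary is stated immediately after Lemma \ref{mea-spa-sub} with no explicit proof, precisely because it follows at once from that lemma (with $\omega = \widetilde{\nu}_E$) combined with the multiplication module inequality $\|e\cdot g\|_E \leq \|e\|_E\|g\|_{\mathcal{F}\mathcal{M}^1_{\widetilde{\nu}_E}}$ from Subsection \ref{subs-conv-mult}.
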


\subsection{Amalgam spaces}\label{sub-section-amalgam}

In this subsection, we define  amalgam spaces which have a TMIB or a DTMIB as local component. These spaces will play an important technical role in the rest of this article. We refer to \cite{feich,f-p-p} for more information.\\
\indent Let $E$ be a TMIB or DTMIB. We define $E_{\operatorname{loc}}=\{f\in\DD'(\R^n)\,|\, \chi f\in E,\, \forall \chi\in\DD(\R^n)\}$. Since $\mathcal{D}(\R^n) \subset \mathcal{F}L^1_{\nu_E}$, the function $\R^n \to E$, $x\mapsto f T_x\chi$ is continuous for all $f \in E_{\operatorname{loc}}$ and $\chi \in \mathcal{D}(\R^n)$. Let $w$ be a polynomially bounded weight function on $\R^n$ and let $1\leq p \leq \infty$. Fix $\chi \in \mathcal{D}(\R^n) \backslash \{0\}$. We define the amalgam space $W(E,L^p_w)$ as the space consisting of all $f \in  E_{\operatorname{loc}}$ such that (cf. \cite{feich}, \cite[Section 3]{f-p-p})
$$
\| f \|_{W(E,L^p_w)} :=  \left(\int_{\R^n}\|f T_x\chi\|_E^p w(x)^p\dx\right)^{1/p}<\infty
$$
(with the obvious modification for $p=\infty$). Then, $W(E,L^p_w)$ is a Banach space whose definition  is independent of the choice $\chi \in \mathcal{D}(\R^n) \backslash \{0\}$ and different non-zero elements of $\mathcal{D}(\R^n)$ induce equivalent norms on  $W(E,L^p_w)$ (cf.\ \cite[Lemma 3.4]{f-p-p}, \cite[Theorem 1]{feich}). By  \cite[Lemma 3.2]{f-p-p}, $W(E,L^p_w)$, $1\leq p<\infty$, is a TMIB if $E$ is so, while $W(E,L^p_w)$, $1 \leq p\leq \infty$, is a DTMIB if $E$ is so.

\section{The twisted translation an the twisted convolution}\label{sect-twisted}
Fix a real-valued $n\times n$-matrix $B$. For $x \in \R^n$ we define \emph{the twisted translation with respect to $B$} as
$$
T^B_x f(t) :=  T_x M_{-Bx} f (t) =  f(t-x)e^{-2\pi i Bx \cdot (t-x)}, \qquad f \in \mathcal{D}'(\R^n).
$$
Note that $T^0_x = T_x$. For all $x,y \in \R^n$, $f \in \mathcal{D}'(\R^n)$, and $\varphi \in C^\infty(\R^n)$ it holds that
\begin{itemize}
\item[$(i)$] $T^B_x  T^{B^t}_y = T^{B^t}_y  T^B_x$.
\item[$(ii)$] $T^B_x(f\cdot \varphi) = T^B_xf\cdot T_x\varphi = T_xf\cdot T^B_x\varphi$.
\item[$(iii)$] $T^B_xf\cdot T^{-B}_x\varphi = T_x(f\cdot \varphi)$.
\end{itemize}
We define \emph{the twisted convolution with respect to $B$}  of $f, g \in L^1$ as
$$
f \ast_B g(t) := \int_{\R^n} f(x) T^B_xg(t) \dx.
$$
Note that $f \ast^0 g = f \ast g$. Define
$$
\theta_B(f)(x) := e^{2\pi i Bx \cdot x} f(x), \qquad f \in \mathcal{D}'(\R^n).
$$
For all  $f, g \in L^1$, $h\in L^1\cap L^{\infty}$, it holds that
\begin{itemize}
\item[$(i)$] $f \ast_B g = g \ast_{B^t} f$.
\item[$(ii)$] $\displaystyle f \ast_B g(t) =  \int_{\R^n} f(x) T^{-B}_t (\theta_B(\check{g}))(x) \dx$.
\item[$(iii)$] $\displaystyle \int_{\R^n} f \ast_B g(t) h(t) \dt = \int_{\R^n} f(t) h \ast_{-B} \theta_B(\check{g})(t) \dt$.
\end{itemize}
\begin{definition} \label{def-B0}
Consider the real-valued $2n \times 2n$-matrix
$$
B_0 := \begin{pmatrix}
0 & 0 \\
I & 0
\end{pmatrix}
.$$
Following the notation used in the introduction, we set
$$
T^\sigma_{(x,\xi)} f(t,\eta) := T^{B_0}_{(x,\xi)}f(t,\eta) = f(t-x,\eta- \xi) e^{-2\pi i x\cdot (\eta-\xi)}, \qquad (x,\xi) \in \R^{2n},
$$
and  for $f, g \in L^1(\R^{2n})$
$$
f \# g(t,\eta) := f \ast_{B_0} g(t,\eta) =  \iint_{\R^{2n}} f(x,\xi)  g(t-x,\eta- \xi) e^{-2\pi i x\cdot (\eta-\xi)} \dx \dxi.
$$
\end{definition}

Next, we  extend the twisted convolution to $\mathcal{S}'(\R^n)$. The proof of the following lemma is straightforward and we omit it.
\begin{lemma} \label{twisted-test} \mbox{}
\begin{itemize}
\item[$(i)$] The mapping  $T^B_x: \mathcal{S}(\R^n) \rightarrow \mathcal{S}(\R^n)$ is  continuous for each $x \in \R^n$. More precisely,
$$
\| T^B_x \varphi \|_{\mathcal{S}^N} \leq (1+2\pi \|B\|)^N \| \varphi \|_{\mathcal{S}^N}  (1+|x|)^{2N}, \qquad  \varphi \in \mathcal{S}(\R^n),\, N \in \N,
$$
 where $\|B\|$ denotes the operator norm of $B$.
\item[$(ii)$] The mapping  $\R^n \rightarrow \mathcal{S}(\R^n), \, x \mapsto T^B_x \varphi$ is continuous for each $\varphi \in  \mathcal{S}(\R^n)$.
\item[$(iii)$] The mappings $\theta_B: \mathcal{S}(\R^n) \rightarrow \mathcal{S}(\R^n)$ and $\theta_B: \mathcal{S}'(\R^n) \rightarrow \mathcal{S}'(\R^n)$ are continuous.
\item[$(iv)$] The bilinear mapping $\ast_B:  \mathcal{S}(\R^n) \times  \mathcal{S}(\R^n) \rightarrow   \mathcal{S}(\R^n)$ is continuous.
\end{itemize}
\end{lemma}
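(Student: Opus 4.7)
The plan is to handle the four assertions in order, with $(i)$ serving as the workhorse from which $(ii)$ and $(iv)$ largely follow. For $(i)$, I would apply the Leibniz rule to $\partial^{\alpha}\bigl[\varphi(t-x)\,e^{-2\pi i Bx\cdot(t-x)}\bigr]$. Each derivative that falls on the exponential produces a factor $-2\pi i(Bx)_j$, bounded in absolute value by $2\pi\|B\|(1+|x|)$, while derivatives on $\varphi$ yield shifted derivatives $(\partial^{\beta}\varphi)(t-x)$. Summing over the at most $2^N$ ways in which $|\alpha|\leq N$ derivatives can be distributed produces a constant bounded by $(1+2\pi\|B\|)^N$ and a factor $(1+|x|)^N$. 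Multiplying by $(1+|t|)^N$ and using $1+|t|\leq(1+|x|)(1+|t-x|)$ to absorb the shift yields an additional $(1+|x|)^N$, for the advertised $(1+|x|)^{2N}$ bound; taking the supremum in $t$ completes $(i)$.

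For $(ii)$, the factorisation $T^B_x = T_x M_{-Bx}$ reduces continuity in $x$ to the classical fact that translation and modulation depend continuously on their parameters as $\R^n$-indexed families in $\mathcal{S}(\R^n)$; composing these two continuous maps, together with the local uniform bound from $(i)$ needed to interchange limits and seminorms, gives the claim. For $(iii)$, I would verify that $\chi_B(x):=e^{2\pi i Bx\cdot x}$ lies in $\mathcal{O}_M(\R^n)$: any $\partial^{\alpha}\chi_B$ is a polynomial in $x$ times $\chi_B$ and hence polynomially bounded. Continuity of multiplication by elements of $\mathcal{O}_M(\R^n)$ on $\mathcal{S}(\R^n)$, and, by duality, on $\mathcal{S}'(\R^n)$, is standard.

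For $(iv)$, I would use the integral representation
$$f\ast_B g(t)=\int_{\R^n}f(x)\,T^B_x g(t)\dx,$$
interpreted as an $\mathcal{S}(\R^n)$-valued Bochner integral, which is justified by $(i)$ and $(ii)$ together with the Schwartz decay of $f$. Combining the estimate in $(i)$ with the integrability of $|f(x)|(1+|x|)^{2N}$ for $f\in\mathcal{S}(\R^n)$ yields
$$\|f\ast_B g\|_{\mathcal{S}^N}\leq (1+2\pi\|B\|)^N\|g\|_{\mathcal{S}^N}\int_{\R^n}|f(x)|(1+|x|)^{2N}\dx,$$
and the last factor is controlled by a Schwartz seminorm of $f$, giving joint continuity. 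There is no substantive obstacle in any of this; the only mildly delicate point, confined to $(i)$, is tracking the constant $(1+2\pi\|B\|)^N$ and the polynomial factor $(1+|x|)^{2N}$ accurately, since $(ii)$ and $(iv)$ both cascade from that quantitative bound.
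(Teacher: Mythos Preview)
Your argument is correct in all four parts; the quantitative bound in $(i)$ follows exactly from the Leibniz expansion you describe, and $(ii)$--$(iv)$ are routine consequences. The paper itself omits the proof entirely, calling it straightforward, so your sketch is in fact more detailed than what appears there.
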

We define the twisted convolution of $f \in \mathcal{S}'(\R^n)$ and $\varphi \in \mathcal{S}(\R^n)$ as
\begin{equation}
f \ast_B \varphi(x) := \langle f, T^{-B}_x\theta_B(\check{\varphi}) \rangle.
\label{def-two}
\end{equation}
Then, $f \ast_B \varphi \in C(\R^n)$ and  $\| f \ast_B \varphi \|_{L^\infty_{(1+|\, \cdot \, |)^{-N}}} < \infty$ for some $N \in \N$. If $A \subset \mathcal{S}'(\R^n)$ is bounded, the previous estimate holds uniformly for $f \in A$. Since $\mathcal{S}'(\R^n)$ is bornological, this implies that the mapping
$$
 \mathcal{S}'(\R^n) \rightarrow \mathcal{S}'(\R^n), \, f \mapsto f \ast_B \varphi
$$
is continuous. As $L^1$ is dense in $\mathcal{S}'(\R^n)$, we have that
$$
\langle f \ast_B \varphi, \psi \rangle = \langle f, \psi \ast_{-B} \theta_B(\check{\varphi}) \rangle, \qquad \psi \in  \mathcal{S}(\R^n),
$$
for all $f \in \mathcal{S}'(\R^n)$ and $\varphi \in \mathcal{S}(\R^n)$.

Finally, we discuss the twisted convolution on TMIB and DTMIB. Let $E$ be a TMIB or a DTMIB. Then, $T^B_x: E \rightarrow E$ is continuous for each $x \in \R^n$ and
\begin{equation}
\rho^B_E(x) := \| T^{B}_x\|_{\mathcal{L}(E)} \leq \omega_E(x) \nu_E(Bx) \leq C_2(1+|x|)^{\tau_0+ \tau_1},
\label{twisted-wf}
\end{equation}
where $C_2 = C_0C_1 \max\{1, \|B\|^{\tau_1}\}$. Note that $\rho^B_E$ is submultiplicative and polynomially bounded. For $e \in E$ fixed, the mapping
\begin{equation}
\R^n \rightarrow E, \, x \mapsto T^{B}_x e,
\label{twisted-orbit}
\end{equation}
is continuous  if $E$ is a TMIB and continuous with respect to the weak-$\ast$ topology on $E$ if $E$ is a DTMIB. Consequently, $\rho^B_E$ is Borel measurable when $E$ is a TMIB (as $E$ is separable). If $E$ is a DTMIB with $E=E'_0$, where $E_0$ a TMIB, the bipolar theorem yields that $\rho^B_E=\check{\rho}^{-B}_{E_0}$, whence $\rho^B_E$ is Borel measurable in this case as well.

Given a Banach space $X\subset \SSS'(\R^n)$, we define the Banach spaces $\check{X}:=\{f\in\SSS'(\R^n)\,|\, \check{f}\in X\}$ with norm $\|f\|_{\check{X}}:=\|\check{f}\|_X$ and $\theta_B X = \{f\in\SSS'(\R^n)\,|\,\theta_{-B}f\in X\}$ with norm $\|f\|_{\theta_B X}:=\|\theta_{-B}f\|_X$.  Furthermore, given a polynomially bounded weight function $w$ on $\R^n$, we denote by $C_w = C_w(\R^n)$ the space $L^\infty_w \cap C(\R^n)$; of course, it is a closed subspace of $L^\infty_w$.\\
\indent Assume that $E$ is a TMIB. The twisted convolution of $e \in E$ and $g \in  (\theta_{-B}E')\check{}$ is defined as
$$
e \ast_B g(x) := {}_E\langle e, T_x^{-B}\theta_B(\check{g})\rangle_{E'}.
$$
Similarly,  we define the twisted convolution of $e \in E'$ and $g \in  (\theta_{-B}E)\check{}$ as
$$
e \ast_B g(x) := {}_{E'}\langle e, T_x^{-B}\theta_B(\check{g})\rangle_{E}.
$$
Obviously, these definitions coincide with the one given in \eqref{def-two} if $g \in \mathcal{S}(\R^n)$. Note that the bilinear mappings
$$
\ast_B :E \times (\theta_{-B}E')\check{}\rightarrow C_{1/\check{\rho}^{B}_{E}}\quad \mbox{and}\quad \ast_B :E'\times (\theta_{-B}E)\check{}\rightarrow C_{1/\check{\rho}^{B}_{E'}}
$$
are well-defined and continuous.

\section{Discrete spaces associated to TMIB and DTMIB}\label{sect-seq}

Throughout this section, $E$ always stands for a TMIB or a DTMIB. We also fix a real-valued $n \times n$-matrix $B$, a lattice $\Lambda$ in $\R^n$, and a bounded open neighbourhood $U$ of the origin such that the family of sets $\{ \lambda + U \, | \, \lambda \in \Lambda \}$ is pairwise disjoint.

\subsection{Definition and basic properties}  The following fundamental definition is inspired by \cite[Definition 3.4]{F-G1}, where  a discrete space is associated to a solid Banach function space.

\begin{definition}\label{def-of-seq-spa}
Let $\chi \in \mathcal{D}(U) \backslash \{ 0\}$. We define \emph{the discrete space associated to $E$ with respect to $B$} as
$$
E^B_d(\Lambda) =E^B_{d,\chi}(\Lambda) := \left\{  c = (c_\lambda)_{\lambda \in \Lambda} \in \C^\Lambda  \, \Big| \,S_\chi(c) := \sum_{\lambda \in \Lambda} c_\lambda T^B_\lambda \chi \in E \right\}
$$
and endow it with the norm $\| c \|_{E^B_d(\Lambda)} = \| c \|_{E^B_{d,\chi}(\Lambda)} := \left \|  S_\chi(c) \right \|_E$.
\end{definition}

We start by showing that $E^B_d(\Lambda)$ is a Banach space whose definition is independent of  $\chi \in \mathcal{D}(U) \backslash \{ 0\}$.
\begin{theorem} \label{basic-sequence} \mbox{}
\begin{itemize}
\item[$(i)$] $E^B_d(\Lambda)$ is a Banach space.
\item[$(ii)$] The definition of $E^B_d(\Lambda)$ is independent of the choice $\chi \in \mathcal{D}(U) \backslash \{ 0\}$ and different  non-zero elements of $\mathcal{D}(U)$ induce equivalent norms on  $E^B_d(\Lambda)$.
\end{itemize}
\end{theorem}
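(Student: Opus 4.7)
For part $(i)$, I would check the three norm axioms and Banach completeness. Linearity and the triangle inequality follow directly from linearity of $S_\chi$ and the norm axioms in $E$. The crux is positive definiteness, which reduces to injectivity of $S_\chi$ on $\C^\Lambda$. Since $\chi \in \mathcal{D}(U)\setminus\{0\}$, fix $x_0\in U$ with $\chi(x_0)\neq 0$. For any $\mu \in \Lambda$ and any $\varphi \in \mathcal{D}(\mu+U)$, the pairwise disjointness of $\{\lambda+U\}_{\lambda \in \Lambda}$ collapses the defining sum to a single term,
$$
\langle S_\chi(c), \varphi \rangle = c_\mu \langle T^B_\mu\chi, \varphi\rangle,
$$
and choosing $\varphi$ supported near $\mu+x_0$ so that $\langle T^B_\mu \chi, \varphi \rangle \neq 0$ extracts $c_\mu$. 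For completeness, a Cauchy sequence $(c^{(n)})$ in $E^B_d(\Lambda)$ gives a Cauchy sequence $f_n := S_\chi(c^{(n)})$ in $E$, with limit $f \in E$. Using $E \hookrightarrow \SSS'$ and the extraction trick, one defines $c_\mu := \lim_n c_\mu^{(n)}$; for any test function $\varphi$ on $\R^n$, only finitely many $\lambda + U$ meet its support, so $\langle f_n, \varphi\rangle = \sum_\lambda c_\lambda^{(n)} \langle T^B_\lambda\chi, \varphi\rangle$ is a finite sum and passes to the limit. This yields $S_\chi(c) = f$ in $\SSS'$, hence $c \in E^B_d(\Lambda)$ and $c^{(n)} \to c$ in norm.

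For part $(ii)$, the decisive tool is the following multiplication lemma, drawn from the module structure established in Subsection~\ref{subs-conv-mult} together with the Poisson-summation estimate from Lemma~\ref{mea-spa-sub}: for every $h \in \mathcal{D}(U)$, the $\Lambda$-periodisation $m_h := \sum_{\nu \in \Lambda} T_\nu h$ belongs to $\mathcal{F}\mathcal{M}^1_{\widetilde{\nu}_E}$ (its Fourier transform is, by Poisson, the discrete measure $\operatorname{vol}(\Lambda)^{-1}\sum_{\mu \in \Lambda^\perp} \widehat{h}(\mu) T_{-\mu}\delta$, which is summable in the required weighted $\mathcal{M}^1$-norm thanks to the Schwartz decay of $\widehat{h}$). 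Now for $\chi \in \mathcal{D}(U)\setminus\{0\}$ and $c \in E^B_{d,\chi}(\Lambda)$, the disjointness of $\{\lambda+U\}$ kills all cross-terms in the pointwise product, and the identity $T^B_\mu\chi \cdot T_\mu h = T^B_\mu(\chi h)$ gives
$$
S_\chi(c) \cdot m_h \;=\; \sum_\mu c_\mu T^B_\mu\chi \cdot T_\mu h \;=\; \sum_\mu c_\mu T^B_\mu(\chi h) \;=\; S_{\chi h}(c).
$$
Combined with the multiplication module estimate, this yields $\|c\|_{\chi h} \leq \|m_h\|_{\mathcal{F}\mathcal{M}^1_{\widetilde{\nu}_E}} \|c\|_\chi$.

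With this lemma, the easy half of $(ii)$ is immediate: given $\chi_1, \chi_2 \in \mathcal{D}(U)\setminus\{0\}$ and an auxiliary $\chi_0 \in \mathcal{D}(U)$ with $\chi_0 > 0$ on a compact neighbourhood of $\operatorname{supp}\chi_1 \cup \operatorname{supp}\chi_2$, both quotients $\chi_j/\chi_0$ extend by zero to elements of $\mathcal{D}(U)$ (since $\chi_j$ is $C^\infty$ and vanishes to infinite order at $\partial \operatorname{supp}\chi_j$), so $\chi_j = (\chi_j/\chi_0)\chi_0$ and the lemma gives $\|c\|_{\chi_j} \leq C_j \|c\|_{\chi_0}$. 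The genuine obstacle is the reverse inequality: writing $\chi_0 = h_j \chi_j$ would require $\chi_j$ to be non-vanishing on all of $\operatorname{supp}\chi_0$, which generally fails because $\chi_j$ may have interior zeros. My plan is to cover $\operatorname{supp}\chi_0$ by a finite collection of small open sets $V_k \subset U$, each contained in a translate $a_k + \{\chi_j \neq 0\}$ with $a_k \in \Lambda$, pick a subordinate partition of unity $\{\rho_k\}$, and handle each piece $\chi_0 \rho_k$ by first reducing via the multiplication lemma to a translate $T_{a_k}^B\chi_j$, then reindexing the resulting synthesis sum over $\Lambda$ (the lattice shift induces only a phase $e^{-2\pi i B\lambda\cdot a_k}$, which is absorbed by the modulation parameter $y$ already built into Lemma~\ref{mea-spa-sub}, giving a bound uniform in $y$). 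This geometric step — ensuring $\chi_j$ is non-vanishing on some lattice translate of each $V_k$ — and the accompanying bookkeeping of modulation factors arising from the twisted translation is the technical heart of the argument and the step I expect to be the main obstacle.
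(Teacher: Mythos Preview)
Your argument for part~$(i)$ is correct and coincides with the paper's. Your multiplication lemma and the ``easy half'' of~$(ii)$ are also correct and are exactly the mechanism the paper uses (its STEP~II is precisely your move $\chi h = (\chi) \cdot (h/\chi)$ when $\operatorname{supp} h \subset \{\chi \neq 0\}$).

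The plan for the hard direction, however, has a genuine gap: the covering you describe cannot exist with $a_k \in \Lambda$. You require $V_k \subset U$ and $V_k \subset a_k + \{\chi_j \neq 0\} \subset a_k + U$; but the sets $\{\lambda + U\}_{\lambda \in \Lambda}$ are pairwise disjoint by hypothesis, so $(a_k + U) \cap U = \emptyset$ whenever $a_k \neq 0$. That forces every $a_k = 0$, hence $\operatorname{supp}\chi_0 \subset \{\chi_j \neq 0\}$, which contradicts your choice of $\chi_0$. The subsequent ``reindexing'' step is therefore moot, and in any case would only relate $S_{\chi_j}(c)$ to $S_{\chi_j}$ applied to a \emph{shifted and modulated} copy of $c$, not to $S_{\chi_j}(c)$ itself.

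The fix is to allow $a_k \in \R^n$ to be arbitrary (small) vectors. Then $T_{-a_k}(\chi_0\rho_k)$ is supported in $\{\chi_j \neq 0\}$ and equals $h_k \chi_j$ for some $h_k \in \mathcal{D}(U)$. Commuting the \emph{ordinary} translation $T_{a_k}$ past $T^B_\lambda$ produces a $\lambda$-dependent phase, and by support disjointness one obtains
\[
S_{\chi_0\rho_k}(c) \;=\; T_{a_k}\Bigl( S_{\chi_j}(c) \cdot \sum_{\lambda'\in\Lambda} e^{-2\pi i B^t a_k \cdot \lambda'} T_{\lambda'} h_k \Bigr).
\]
Now no reindexing is needed: the outer $T_{a_k}$ is bounded on $E$ by $\omega_E(a_k)$, and the periodic multiplier is controlled by Corollary~\ref{mult-new} with $y = -B^t a_k$, exactly as you anticipated. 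This is precisely the paper's STEP~I (there the roles of your $\chi_0$ and $\chi_j$ are played by an arbitrary $\chi$ and a window $\widetilde\chi$ equal to $1$ on an open set, and the $a_k$ are the grid points $rm - x_0$); the paper's three-step reduction is just a slightly different packaging of the same idea.
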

\begin{proof}
$(i)$  Let $(c_j)_{j \in \N}$ be a Cauchy sequence in $E^B_d(\Lambda)$.  Since $E$ is continuously included in $\mathcal{D}'(\R^n)$, the inclusion mapping $E^B_d(\Lambda) \rightarrow \C^\Lambda$ is continuous. Hence, there is $c \in \C^\Lambda$ such that $\lim_{j \to \infty}c_j = c$ in $\C^\Lambda$. As $(S_\chi(c_j))_{j \in \N}$ is a Cauchy sequence in $E$, there is $e \in E$ such that $\lim_{j \to \infty} S_\chi(c_j) = e$ in $E$. Note that $e = S_\chi(c)$ in  $\mathcal{D}'(\R^n)$. Therefore, $c \in E^B_d(\Lambda)$ and $\lim_{j \to \infty}c_j = c$ in $E^B_d(\Lambda)$.

$(ii)$ We divide the proof into three steps.

STEP I: \emph{Let $\widetilde{\chi} \in \mathcal{D}(U) \backslash \{0\}$ be such that $\widetilde{\chi} = 1$ on some non-empty open subset $V$ of $U$. Then, $E^B_{d,\widetilde{\chi}}(\Lambda)$ is continuously included into $E^B_{d,\chi}(\Lambda)$ for all $\chi \in \mathcal{D}(U) \backslash \{0\}$.}

Let $x_0 \in U$ and $r > 0$ be such that $[x_0-r,x_0+r]^n \subset V$. Pick $\psi \in \mathcal{D}_{[-r,r]^n}$ such that $\sum_{m \in \Z^n} T_{rm}\psi = 1$ on $\R^n$. Hence, there is $N \in \N$ such that $\sum_{ |m| \leq N} T_{rm}\psi = 1$ on $\operatorname{supp} \chi$. For all $c \in E^B_{d,\widetilde{\chi}}(\Lambda)$ it holds that
\begin{align*}
\sum_{\lambda \in \Lambda} c_\lambda T^B_\lambda \chi &= \sum_{ |m| \leq N} \sum_{\lambda \in \Lambda} c_\lambda T^B_\lambda( \chi T_{rm}\psi) \\
&= \sum_{ |m| \leq N} \sum_{\lambda \in \Lambda} c_\lambda e^{2\pi i B\lambda\cdot (x_0-rm)} T_{rm-x_0}T^B_\lambda (T_{x_0}\psi T_{x_0-rm}\chi) \\
&=\sum_{ |m| \leq N}  T_{rm-x_0}\left ( \sum_{\lambda \in \Lambda} c_\lambda e^{2\pi i B^t(x_0-rm)\cdot \lambda} T^B_\lambda ( \widetilde{\chi} T_{x_0}\psi T_{x_0-rm}\chi) \right) \\
&=\sum_{ |m| \leq N}T_{rm-x_0} \left ( \sum_{\lambda \in \Lambda} c_\lambda T^B_\lambda \widetilde{\chi} \cdot \sum_{\lambda' \in \Lambda} e^{2\pi i B^t(x_0-rm)\cdot \lambda'} T_{\lambda'}(T_{x_0-rm}\chi T_{x_0}\psi) \right).
\end{align*}
The result is therefore a consequence of Corollary \ref{mult-new}.

STEP II: \emph{ Let $\chi \in \mathcal{D}(U) \backslash \{0\}$. Choose $\widetilde{\chi} \in \mathcal{D}(U)$ such that $\operatorname{supp}\widetilde{\chi}  \subset \{ x \in U \, | \, \chi(x) \neq 0 \}$ and $\widetilde{\chi} = 1$ on some non-empty open subset $V$ of $U$. Then,  $E^B_{d,\chi}(\Lambda) = E^B_{d,\widetilde{\chi}}(\Lambda)$ with equivalent norms.}

By STEP I, $E^B_{d,\widetilde{\chi}}(\Lambda)$ is continuously included in $E^B_{d,\chi}(\Lambda)$. We now show the converse inclusion. Set $\varphi =  \widetilde{\chi} / \chi  \in \mathcal{D}(U)$. For all  $c \in E^B_{d,\chi}(\Lambda)$ it holds that
$$
\sum_{\lambda \in \Lambda} c_\lambda T^B_\lambda \widetilde{\chi} =  \sum_{\lambda \in \Lambda} c_\lambda T^B_\lambda( \chi \varphi)
=  \sum_{\lambda \in \Lambda} c_\lambda T^B_\lambda \chi \cdot \sum_{{\lambda'} \in \Lambda} T_{\lambda'} \varphi,
$$
whence the result follows from Corollary \ref{mult-new}.

STEP III: \emph{ Let $\chi_1, \chi_2 \in \mathcal{D}(U) \backslash \{0\}$. Then,  $E^B_{d,\chi_1}(\Lambda) = E^B_{d,\chi_2}(\Lambda)$ with equivalent norms.}

Choose $\widetilde{\chi}_1, \widetilde{\chi}_2 \in \mathcal{D}(U)$ as in STEP II. Then,
$$
E^B_{d,\chi_1}(\Lambda) = E^B_{d, \widetilde{\chi}_1}(\Lambda) =  E^B_{d, \widetilde{\chi}_2}(\Lambda)  = E^B_{d,\chi_2}(\Lambda)
$$
with equivalent norms, where the first and third equality follow from STEP II and the second equality follows from STEP I.
\end{proof}

\begin{remark}\label{rem-ind-ope}
An immediate consequence of Theorem \ref{basic-sequence} is that $E^B_d(\Lambda)$ also does not depend on the bounded open set $U$ as long as the family of sets $\{\lambda +U\,|\, \lambda\in\Lambda\}$ is pairwise disjoint.
\end{remark}

The next result, which will be used later on, follows from an inspection of the proof of Theorem \ref{basic-sequence}.
\begin{lemma}\label{bound-bel}
Let $A \subset \DD(U)\backslash\{0\}$ be a bounded subset of $\DD(U)$.
\begin{itemize}
\item[$(i)$] For every $\chi \in \DD(U) \backslash\{0\}$ there is $C >0$ such that
   $$
   \sup_{\varphi \in A} \| c \|_{E^B_{d,\varphi}(\Lambda)} \leq C \| c \|_{E^B_{d,\chi}(\Lambda)}, \qquad c \in E^B_{d}(\Lambda).
   $$
\item[$(ii)$] Suppose that there is a non-empty open subset $V$ of $U$ such that
    \beqs
    \inf_{\varphi \in A} \inf_{x \in V} |\varphi(x)| > 0.
    \eeqs
    Then,  for every $\chi\in \DD(U)\backslash\{0\}$ there is $C > 0 $ such that
   $$
  \| c \|_{E^B_{d,\chi}(\Lambda)}\leq C  \inf_{\varphi \in A} \| c \|_{E^B_{d,\varphi}(\Lambda)} , \qquad c \in E^B_{d}(\Lambda).
   $$
\end{itemize}
\end{lemma}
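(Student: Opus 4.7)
My plan is to inspect the proof of Theorem \ref{basic-sequence}(ii) and to verify that the constants appearing there can be chosen uniformly in $\varphi\in A$. Two consequences of the boundedness of $A$ in $\DD(U)$ will be used repeatedly. First, there is a compact $K\Subset U$ such that $\supp\varphi\subseteq K$ for every $\varphi\in A$, so any integer that depends only on $\supp\varphi$ can be chosen uniformly. Second, the continuous inclusion $\DD(U)\hookrightarrow\mathcal{F}L^1_{\widetilde{\nu}_E}$ sends $A$ into a bounded set, and the same remains true for the translates $\{T_x\varphi:\varphi\in A\}$ whenever $x$ varies in a bounded set.

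For part $(i)$ I would first fix a reference cut-off $\widetilde{\chi}\in\DD(U)$ with $\widetilde{\chi}=1$ on some non-empty open subset of $U$. By Theorem \ref{basic-sequence}(ii) we have $\|c\|_{E^B_{d,\widetilde{\chi}}(\Lambda)}\leq C_1\|c\|_{E^B_{d,\chi}(\Lambda)}$, so it suffices to prove the uniform inequality $\sup_{\varphi\in A}\|c\|_{E^B_{d,\varphi}(\Lambda)}\leq C_2\|c\|_{E^B_{d,\widetilde{\chi}}(\Lambda)}$. This I would obtain by repeating the computation of STEP I with $\chi$ replaced by $\varphi$: choosing $N$ large enough so that $\sum_{|m|\leq N}T_{rm}\psi=1$ on $K$ (a uniform choice), one writes
$$
\sum_{\lambda\in\Lambda}c_\lambda T^B_\lambda\varphi=\sum_{|m|\leq N}T_{rm-x_0}\left(\sum_{\lambda\in\Lambda}c_\lambda T^B_\lambda\widetilde{\chi}\cdot\sum_{\lambda'\in\Lambda}e^{2\pi iB^t(x_0-rm)\cdot\lambda'}T_{\lambda'}(T_{x_0-rm}\varphi\,T_{x_0}\psi)\right),
$$
and applies Corollary \ref{mult-new} with $T_{x_0}\psi$ playing the role of the fixed Schwartz function. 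The resulting constant is controlled by $\sup_{\varphi\in A,\,|m|\leq N}\|T_{x_0-rm}\varphi\|_{\mathcal{F}L^1_{\widetilde{\nu}_E}}$, which is finite by the second observation above, together with the polynomial growth of $\|T_{rm-x_0}\|_{\mathcal{L}(E)}$.

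For part $(ii)$ I would instead fix $\widetilde{\chi}\in\DD(V)$ with $\widetilde{\chi}=1$ on a non-empty open subset of $V$. By Theorem \ref{basic-sequence}(ii) once more, $\|c\|_{E^B_{d,\chi}(\Lambda)}\leq C_3\|c\|_{E^B_{d,\widetilde{\chi}}(\Lambda)}$, so the task reduces to showing $\|c\|_{E^B_{d,\widetilde{\chi}}(\Lambda)}\leq C_4\|c\|_{E^B_{d,\varphi}(\Lambda)}$ uniformly in $\varphi\in A$. Here the hypothesis $\delta:=\inf_{\varphi\in A}\inf_{x\in V}|\varphi(x)|>0$ plays its role: it guarantees that $\widetilde{\chi}/\varphi$ is a well-defined element of $\DD(U)$ with support contained in $\supp\widetilde{\chi}\subseteq V$ for every $\varphi\in A$. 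Imitating STEP II and invoking the pairwise disjointness of $\{\lambda+U\}_{\lambda\in\Lambda}$, one obtains
$$
\sum_{\lambda\in\Lambda}c_\lambda T^B_\lambda\widetilde{\chi}=\left(\sum_{\lambda\in\Lambda}c_\lambda T^B_\lambda\varphi\right)\cdot\left(\sum_{\lambda'\in\Lambda}T_{\lambda'}(\widetilde{\chi}/\varphi)\right),
$$
and Corollary \ref{mult-new} then yields the desired uniform inequality provided that $\{\widetilde{\chi}/\varphi:\varphi\in A\}$ is bounded in $\DD(U)$.

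The main obstacle is precisely this last verification. It amounts to a Leibniz/Fa\`a di Bruno calculation: the supports are uniformly contained in $\supp\widetilde{\chi}$, and every derivative of $\widetilde{\chi}/\varphi$ can be written as a polynomial in the derivatives of $\widetilde{\chi}$ and of $\varphi$ divided by powers of $\varphi$. The uniform lower bound $|\varphi|\geq\delta$ on $\supp\widetilde{\chi}\subseteq V$ together with the uniform bounds on the derivatives of $\varphi\in A$ then gives boundedness in $\DD(U)$, and hence in $\mathcal{F}L^1_{\widetilde{\nu}_E}$. Everything else in the argument is essentially bookkeeping in the constants already extracted from STEP I and STEP II of the proof of Theorem \ref{basic-sequence}.
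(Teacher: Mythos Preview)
Your proposal is correct and is exactly the ``inspection'' the paper has in mind: the paper gives no details beyond the sentence that the lemma follows from an inspection of the proof of Theorem \ref{basic-sequence}, and you have carried out precisely that inspection, tracking the constants through STEP I (for part $(i)$) and STEP II (for part $(ii)$) and verifying that they can be chosen uniformly over $\varphi\in A$. The two key uniformity points---a common compact support $K\Subset U$ allowing a single choice of $N$, and the boundedness of $\{\widetilde{\chi}/\varphi:\varphi\in A\}$ in $\DD(U)$ via the lower bound $\delta$ and Fa\`a di Bruno---are the right observations, and the application of Corollary \ref{mult-new} goes through as you describe (with an auxiliary cut-off $\psi_1\in\DD(U)$ equal to $1$ on $\operatorname{supp}\widetilde{\chi}$ so that $\widetilde{\chi}/\varphi=(\widetilde{\chi}/\varphi)\psi_1$ fits the format of that corollary).
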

Consider the following discrete spaces
$$
\mathcal{S}_{d}(\Lambda) := \{ c \in \C^\Lambda \, | \, \| c\|_{\mathcal{S}^N_{d}(\Lambda)} := \sup_{\lambda \in \Lambda} |c_\lambda|  (1+|\lambda|)^N < \infty, \, \forall N \in \N\}
$$
and
$$
\mathcal{S}'_{d}(\Lambda) := \{ c  \in \C^\Lambda \, | \, \exists N \in \N \, : \, \| c\|_{\mathcal{S}^{-N}_{d}(\Lambda)} := \sup_{\lambda \in \Lambda} |c_\lambda| (1+|\lambda|)^{-N} < \infty \},
$$
and endow them with their natural Fr\'echet space  and $(LB)$-space topology, respectively. The strong dual of $\mathcal{S}_{d}(\Lambda)$ may be topologically identified with $\mathcal{S}'_{d}(\Lambda)$. We then have:
\begin{proposition} \label{embedding-sequences} The following continuous inclusions hold
$$
\mathcal{S}_{d}(\Lambda) \rightarrow E^B_d(\Lambda) \rightarrow \mathcal{S}'_{d}(\Lambda).
$$
\end{proposition}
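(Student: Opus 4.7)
The plan is to prove both inclusions by exploiting that the supports of the translates $T^B_\lambda\chi$ sit inside the disjoint sets $\lambda+U$; everything will reduce to elementary norm estimates once this geometric picture is fixed.

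For the forward inclusion $\mathcal{S}_d(\Lambda)\hookrightarrow E^B_d(\Lambda)$, I would fix $\chi\in\mathcal{D}(U)\setminus\{0\}$ and start from the bound $\|T^B_\lambda\chi\|_E\leq \rho^B_E(\lambda)\|\chi\|_E\leq C_2\|\chi\|_E(1+|\lambda|)^{\tau_0+\tau_1}$ coming from \eqref{twisted-wf}. Given $c\in\mathcal{S}_d(\Lambda)$, choosing $N>n+\tau_0+\tau_1$ yields
$$
\sum_{\lambda\in\Lambda}\|c_\lambda T^B_\lambda\chi\|_E \,\leq\, C\|\chi\|_E\|c\|_{\mathcal{S}^N_d(\Lambda)}\sum_{\lambda\in\Lambda}(1+|\lambda|)^{\tau_0+\tau_1-N}\,<\,\infty,
$$
so the defining series converges absolutely in $E$. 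Since $E\hookrightarrow\mathcal{S}'(\R^n)$, this norm limit coincides with $S_\chi(c)$ as a distribution, so $c\in E^B_d(\Lambda)$ with the desired norm estimate. The same argument works uniformly for TMIB and DTMIB, since absolute norm convergence in $E$ bypasses any weak-$\ast$ issue.

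For the reverse inclusion $E^B_d(\Lambda)\hookrightarrow\mathcal{S}'_d(\Lambda)$, the key idea is to extract $c_{\lambda_0}$ by testing against a suitable ``dual'' function that undoes the twist. I would pick $\varphi\in\mathcal{D}(U)$ with $\langle\chi,\varphi\rangle\neq 0$ (e.g.\ $\varphi=\overline{\chi}$) and consider the pairing $\langle S_\chi(c),T^{-B}_{\lambda_0}\varphi\rangle$, where $T^{-B}_{\lambda_0}\varphi\in\mathcal{D}(\R^n)\subset\mathcal{S}(\R^n)$. For $\lambda\neq\lambda_0$ the integrand $T^B_\lambda\chi\cdot T^{-B}_{\lambda_0}\varphi$ is supported in $(\lambda+U)\cap(\lambda_0+U)=\emptyset$; for $\lambda=\lambda_0$ the two phases $e^{-2\pi iB\lambda_0\cdot(t-\lambda_0)}$ and $e^{2\pi iB\lambda_0\cdot(t-\lambda_0)}$ cancel and the remaining integral equals $\langle\chi,\varphi\rangle$. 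This yields the extraction identity
$$
c_{\lambda_0}=\frac{1}{\langle\chi,\varphi\rangle}\langle S_\chi(c),T^{-B}_{\lambda_0}\varphi\rangle, \qquad \lambda_0\in\Lambda.
$$
Combining this with the continuity of $E\hookrightarrow\mathcal{S}'(\R^n)$ (yielding some $N\in\N$ and a seminorm bound) and the estimate $\|T^{-B}_{\lambda_0}\varphi\|_{\mathcal{S}^N}\leq(1+2\pi\|B\|)^N\|\varphi\|_{\mathcal{S}^N}(1+|\lambda_0|)^{2N}$ from Lemma \ref{twisted-test}(i) gives
$$
|c_{\lambda_0}|\,\leq\, C'\|c\|_{E^B_d(\Lambda)}(1+|\lambda_0|)^{2N},
$$
which is exactly membership in $\mathcal{S}'_d(\Lambda)$ with continuous inclusion.

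I do not foresee any serious obstacle. The only subtle point is the second step, where it is essential that the \emph{opposite} twist $-B$ be applied to the probe $\varphi$: with any other choice, the phase in $T^B_{\lambda_0}\chi$ would not cancel and the pairing would pick up an oscillating factor $\int\chi(y)\varphi(y)e^{-2\pi iB\lambda_0\cdot y}\,\mathrm{d}y$ whose modulus can decay to $0$ as $|\lambda_0|\to\infty$, destroying the uniform extraction identity.
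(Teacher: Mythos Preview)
Your proof is correct and follows essentially the same approach as the paper. The paper packages both steps into a separate lemma (Lemma~\ref{sequence-mother}) about the mappings $\mathcal{S}_d(\Lambda)\to\mathcal{S}(\R^n)$ and $\mathcal{S}'_d(\Lambda)\to\mathcal{S}'(\R^n)$ induced by $c\mapsto S_\chi(c)$, and then invokes the continuous inclusions $\mathcal{S}(\R^n)\hookrightarrow E\hookrightarrow\mathcal{S}'(\R^n)$; in particular, your extraction identity $c_{\lambda_0}=\langle S_\chi(c),T^{-B}_{\lambda_0}\varphi\rangle/\langle\chi,\varphi\rangle$ via the opposite twist is exactly the device used there to prove part~(iii) of that lemma.
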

In view of the continuous inclusions $\mathcal{S}(\R^n) \rightarrow E \rightarrow \mathcal{S}'(\R^n)$,
Proposition \ref{embedding-sequences} is a direct consequence of the next lemma.
\begin{lemma}\label{sequence-mother} Let $\varphi \in \mathcal{S}(\R^n)$.
\begin{itemize}
\item[$(i)$] The mapping
$$
 \mathcal{S}_{d}(\Lambda) \rightarrow \mathcal{S}(\R^n), \, c \mapsto \sum_{\lambda \in \Lambda} c_\lambda T^B_{\lambda}\varphi
$$
is well-defined and continuous, and the series $\sum_{\lambda \in \Lambda} c_\lambda T^B_{\lambda}\varphi$ is absolutely summable in $\mathcal{S}(\R^n)$.
\item[$(ii)$] The mapping
\begin{equation}
 \mathcal{S}'_{d}(\Lambda) \rightarrow \mathcal{S}'(\R^n), \, c \mapsto \sum_{\lambda \in \Lambda} c_\lambda T^B_{\lambda}\varphi
\label{sequence-spaces-test}
\end{equation}
is well-defined and continuous, and the series $\sum_{\lambda \in \Lambda} c_\lambda T^B_{\lambda}\varphi$ is absolutely summable in $\mathcal{S}'(\R^n)$.
\item[$(iii)$] Suppose that $\varphi \in \mathcal{D}(U) \backslash \{0\}$. Then, $c \in \C^\Lambda$ belongs to  $\mathcal{S}'_{d}(\Lambda)$ if and only if  $ \sum_{\lambda \in \Lambda} c_\lambda T^B_{\lambda}\varphi \in  \mathcal{S}'(\R^n)$. Moreover, the mapping in \eqref{sequence-spaces-test} is a topological embedding.
\end{itemize}
\end{lemma}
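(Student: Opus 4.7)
The plan is to handle the three parts in order, with the main tools being the Schwartz-norm bound $\|T^B_x \varphi\|_{\mathcal{S}^N} \leq (1+2\pi\|B\|)^N \|\varphi\|_{\mathcal{S}^N}(1+|x|)^{2N}$ from Lemma \ref{twisted-test}(i) together with Peetre's inequality $(1+|s+\lambda|)^{-N} \leq 2^N(1+|s|)^N(1+|\lambda|)^{-N}$. For part (i), I would fix $\varphi \in \mathcal{S}(\R^n)$ and $c \in \mathcal{S}_d(\Lambda)$ and simply dominate
\[\sum_{\lambda \in \Lambda} |c_\lambda| \, \| T^B_\lambda \varphi \|_{\mathcal{S}^N} \leq C\, \|\varphi\|_{\mathcal{S}^N} \|c\|_{\mathcal{S}^M_d(\Lambda)} \sum_{\lambda \in \Lambda} (1+|\lambda|)^{2N - M},\]
which is finite as soon as $M > 2N + n$. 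This yields absolute summability in $\mathcal{S}(\R^n)$ together with the continuity assertion.

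For part (ii), I would pair $T^B_\lambda \varphi$ with an arbitrary $\psi \in \mathcal{S}(\R^n)$ and change variables via $s = t - \lambda$ to obtain
\[\langle T^B_\lambda \varphi, \psi \rangle = \int_{\R^n} \varphi(s)\, e^{-2\pi i B\lambda \cdot s}\, \psi(s+\lambda) \, {\rm d}s,\]
whence $|\langle T^B_\lambda \varphi, \psi \rangle| \leq C(1+|\lambda|)^{-N} \|\varphi\|_{\mathcal{S}^{N+n+1}} \|\psi\|_{\mathcal{S}^N}$ for every $N \in \N$, after moving $N$ factors of $(1+|\cdot|)$ across through Peetre and absorbing the remainder into a convergent integral against $(1+|s|)^{-n-1}$. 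Given $c \in \C^\Lambda$ with $|c_\lambda| \leq K(1+|\lambda|)^{M}$, choosing $N > M + n$ delivers absolute summability of $\sum_\lambda c_\lambda T^B_\lambda \varphi$ in the strong topology of $\mathcal{S}'(\R^n)$ (uniformly for $\psi$ in any bounded subset of $\mathcal{S}(\R^n)$), as well as continuity on each Banach step $\mathcal{S}^{-M}_d(\Lambda)$ and hence on the $(LB)$-space $\mathcal{S}'_d(\Lambda)$.

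For part (iii), I would exploit the disjointness of $\{\lambda + U\}_{\lambda \in \Lambda}$ to construct a biorthogonal family. Since $\varphi$ is nonzero on some open subset of $U$, choose $\chi_0 \in \mathcal{D}(U)$ with $\int_{\R^n} \varphi(t)\chi_0(t) \, {\rm d}t = 1$ and set $\psi_\lambda := T^{-B}_\lambda \chi_0$; then $\operatorname{supp} \psi_\lambda \subset \lambda + U$, and a short computation gives $\langle T^B_\mu \varphi, \psi_\lambda \rangle = \delta_{\mu \lambda}$. Hence if $f := \sum_\mu c_\mu T^B_\mu \varphi$ belongs to $\mathcal{S}'(\R^n)$, then $c_\lambda = \langle f, \psi_\lambda \rangle$; combining the continuity of $f$ (bounded by some $\|\cdot\|_{\mathcal{S}^M}$) with $\|\psi_\lambda\|_{\mathcal{S}^M} \leq C(1+|\lambda|)^{2M}$ from Lemma \ref{twisted-test}(i) places $c$ in $\mathcal{S}^{-2M}_d(\Lambda) \subset \mathcal{S}'_d(\Lambda)$. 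To upgrade this to a topological embedding, I would introduce the continuous maps $T \colon \mathcal{S}(\R^n) \to \mathcal{S}_d(\Lambda)$, $\psi \mapsto (\langle T^B_\lambda \varphi, \psi \rangle)_\lambda$ (continuous by the estimate of (ii)), and $R \colon \mathcal{S}_d(\Lambda) \to \mathcal{S}(\R^n)$, $c \mapsto \sum_\lambda c_\lambda \psi_\lambda$ (continuous by part (i) applied to $(-B,\chi_0)$). Biorthogonality yields $T \circ R = \operatorname{id}_{\mathcal{S}_d(\Lambda)}$, so passing to the strong duals gives $R' \circ T' = \operatorname{id}_{\mathcal{S}'_d(\Lambda)}$; since $T'$ coincides with the map in \eqref{sequence-spaces-test}, it has $R'$ as a continuous left inverse and is therefore a topological embedding.

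The step I expect to be the main obstacle is extracting the polynomial decay in (ii). A naive estimate only gives $|\langle T^B_\lambda \varphi, \psi \rangle| \leq \|\varphi\|_{L^1}\|\psi\|_{L^\infty}$, which is independent of $\lambda$ and useless here; it is precisely the combination of the translation appearing in $\psi(s+\lambda)$ with Peetre's inequality that converts this shift into genuine decay in $\lambda$. Everything else in the argument is bookkeeping built around Lemma \ref{twisted-test}(i) and the disjoint-support structure of $\{\lambda + U\}$.
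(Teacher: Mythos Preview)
Your argument is correct. Parts $(i)$ and $(ii)$ match what the paper has in mind (it dismisses them as ``easy consequences of Lemma \ref{twisted-test}'' without writing out the details you supply), and your biorthogonality computation in part $(iii)$ is exactly the paper's: it too picks $\psi\in\mathcal{D}(U)$ with $\int\varphi\psi=1$, tests the series against $T^{-B}_\lambda\psi$ to recover $c_\lambda$, and bounds $\|T^{-B}_\lambda\psi\|_{\mathcal{S}^N}$ via Lemma \ref{twisted-test}$(i)$.

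The one genuine divergence is in how you upgrade to a topological embedding. The paper argues abstractly: the map is injective with closed range (closedness following from the characterisation just proved), a closed subspace of the $(DFS)$-space $\mathcal{S}'(\R^n)$ is again $(DFS)$, and then the De Wilde open mapping theorem finishes. Your route is more constructive: you build a continuous map $T:\mathcal{S}(\R^n)\to\mathcal{S}_d(\Lambda)$ and a continuous right inverse $R$, then transpose to get a continuous left inverse $R'$ for the map in question. This avoids the open mapping theorem entirely and makes the embedding explicit, at the cost of checking that transposition preserves strong continuity (which it does, since continuous linear maps send bounded sets to bounded sets). Both approaches are short; yours is arguably more elementary, while the paper's highlights the $(DFS)$ structure that is used elsewhere.
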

\begin{proof} Parts $(i)$ and $(ii)$ are easy consequences of Lemma \ref{twisted-test} and we omit their proofs. We now show $(iii)$. Let $c \in \C^\Lambda$ be such that $\sum_{\lambda \in \Lambda} c_\lambda T^B_{\lambda}\varphi \in  \mathcal{S}'(\R^n)$. Hence, there are $N \in \N$ and $C > 0$ such that
$$
 \left| \left\langle \sum_{\lambda \in \Lambda} c_\lambda T^B_\lambda \varphi,  \psi \right\rangle \right| \leq C \| \psi \|_{\mathcal{S}^N}, \qquad \psi \in \mathcal{D}(\R^n).
$$
Pick $\psi \in \mathcal{D}(U)$ such that $\int_{\R^n} \varphi(x) \psi(x)\dx =1$. Then,
$$
 \left\langle \sum_{\lambda' \in \Lambda} c_{\lambda'} T^B_{\lambda'} \varphi, T^{-B}_{\lambda} \psi \right\rangle = c_{\lambda} \int_{\R^n} T^B_{\lambda} \varphi(x) T^{-B}_{\lambda}\psi(x) \dx = c_{\lambda}, \qquad \lambda \in \Lambda.
$$
 Lemma \ref{twisted-test}$(i)$ now implies that, for all $\lambda \in \Lambda$,
 \begin{align*}
|c_{\lambda}| =   \left| \left\langle \sum_{\lambda' \in \Lambda} c_{\lambda'} T^B_{\lambda'} \varphi, T^{-B}_{\lambda} \psi \right\rangle\right| \leq C \|T^{-B}_{\lambda} \psi \|_{ \mathcal{S}^N} \leq C (1+2\pi \|B\|)^N \| \psi \|_{\mathcal{S}^N}  (1+|{\lambda}|)^{2N},
\end{align*}
whence $c \in \mathcal{S}'_{d}(\Lambda)$. Finally, we show that the continuous mapping \eqref{sequence-spaces-test} is a topological embedding. It is clear that this mapping is injective and, by what we have just shown, it also has closed range. Since $\mathcal{S}'(\R^n)$ is a $(DFS)$-space  and  a closed subspace of a $(DFS)$-space is again a $(DFS)$-space, we obtain that the range of the mapping \eqref{sequence-spaces-test} is a $(DFS)$-space. Hence,  the result follows from the De Wilde open mapping theorem \cite[Theorem 1, p. 59]{kothe2} (cf.\ \cite[Theorem 8, p. 63]{kothe2}).
\end{proof}
Next, we give two results that will play a crucial role in the rest of the article.
The following result is the analogue of \cite[Proposition 5.2]{F-G1}  in our setting (see also the proof of \cite[Theorem 12.2.4]{Grochenig}).
\begin{proposition}\label{lemma-synthesis}
The bilinear mapping
$$
E^B_d(\Lambda)\times \SSS(\R^n) \rightarrow E, \, (c,\varphi) \mapsto S_{\varphi}(c),\quad \mbox{with}\quad S_{\varphi}(c)= \sum_{\lambda \in \Lambda} c_\lambda T^B_\lambda \varphi,
$$
is well-defined and continuous and uniquely extends to a continuous bilinear mapping
\begin{equation}\label{bil-synthesis-mappiing}
E^B_d(\Lambda)\times W(\mathcal{F}L^1_{\widetilde{\nu}_E},L^1_{\omega_E}) \rightarrow E,\, (c,\varphi)\mapsto\widetilde{S}_{\varphi}(c).
\end{equation}
Furthermore, if $E$ is a DTMIB with $E = E_0'$, where $E_0$ is a TMIB, there is $\chi \in \mathcal{D}(U)\backslash\{0\}$ such that for every $g \in E_0$ and $\varphi\in W(\mathcal{F}L^1_{\widetilde{\nu}_E},L^1_{\omega_E})$ there is $h \in E_0$ such that
\begin{equation}\label{change-for-element-dual}
\langle \widetilde{S}_{\varphi}(c), g \rangle= \left\langle S_\chi(c), h \right\rangle,\qquad  c\in E^B_d(\Lambda).
\end{equation}
\end{proposition}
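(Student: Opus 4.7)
The plan is to decompose $\varphi$ with a bounded uniform partition of unity and reduce, term by term, to Corollary \ref{mult-new}. Fix $\chi \in \mathcal{D}(U)$ with $\chi \equiv 1$ on an open $V \Subset U$, and pick $\psi \in \mathcal{D}(V)$ and $r>0$ so that $\sum_{m \in \Z^n} T_{rm}\psi \equiv 1$ on $\R^n$. Set $\varphi_m := \varphi\cdot T_{rm}\psi$, $\widetilde{\varphi}_m := T_{-rm}\varphi_m$ (supported in $\operatorname{supp}\psi \subset V$), and $y_m := -B^t rm$. The pivot of the proof is the identity
\begin{equation*}
\sum_{\lambda\in\Lambda} c_\lambda T^B_\lambda \varphi_m \;=\; T_{rm}\Bigl[\,S_\chi(c)\cdot\sum_{\lambda'\in\Lambda} e^{2\pi i y_m\cdot\lambda'}\,T_{\lambda'}\widetilde{\varphi}_m\,\Bigr],
\end{equation*}
which I will verify by combining: (i) the commutation $T^B_\lambda T_{rm} = e^{2\pi i y_m\cdot\lambda}\,T_{rm} T^B_\lambda$ coming from Section \ref{sect-twisted}; (ii) the Leibniz rule $T^B_\lambda(\chi\widetilde{\varphi}_m) = T^B_\lambda\chi \cdot T_\lambda \widetilde{\varphi}_m$ (valid because $\chi\widetilde{\varphi}_m = \widetilde{\varphi}_m$); and crucially (iii) the pairwise disjointness of $\{\lambda + U\}_{\lambda\in\Lambda}$, which annihilates all cross terms in the double sum and lets the phase $e^{2\pi i y_m\cdot\lambda}$ be \emph{transferred} from the $\lambda$-sum onto the independent $\lambda'$-sum.

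Taking a fixed $\rho\in\mathcal{D}(V)$ with $\rho\equiv 1$ on $\operatorname{supp}\psi$, so $\widetilde{\varphi}_m=\rho\widetilde{\varphi}_m$, Corollary \ref{mult-new} applied with $e=S_\chi(c)$, $f=\widetilde{\varphi}_m$, $\varphi=\rho$, $y=y_m$ yields the \emph{$m$-uniform} bound
\begin{equation*}
\bigl\|\sum_\lambda c_\lambda T^B_\lambda\varphi_m\bigr\|_E \;\leq\; \omega_E(rm)\cdot C\,\|c\|_{E^B_d(\Lambda)}\,\|\widetilde{\varphi}_m\|_{\mathcal{F}L^1_{\widetilde{\nu}_E}}.
\end{equation*}
Since $\mathcal{F}L^1_{\widetilde{\nu}_E}$ is isometrically translation-invariant, $\|\widetilde{\varphi}_m\|_{\mathcal{F}L^1_{\widetilde{\nu}_E}} = \|\varphi\,T_{rm}\psi\|_{\mathcal{F}L^1_{\widetilde{\nu}_E}}$, and the discrete BUPU-characterisation of the amalgam norm (cf.\ \cite[Lemma 3.4]{f-p-p}) gives, upon summation,
\begin{equation*}
\|S_\varphi(c)\|_E \;\leq\; C'\,\|c\|_{E^B_d(\Lambda)}\,\|\varphi\|_{W(\mathcal{F}L^1_{\widetilde{\nu}_E},\,L^1_{\omega_E})},\qquad \varphi\in\mathcal{S}(\R^n).
\end{equation*}
The extension to $\varphi \in W(\mathcal{F}L^1_{\widetilde{\nu}_E},L^1_{\omega_E})$ is then defined by the same termwise formula $\widetilde{S}_\varphi(c) := \sum_m T_{rm}[S_\chi(c)\cdot G_m(\varphi)]$, where $G_m(\varphi):=\sum_{\lambda'}e^{2\pi i y_m\cdot\lambda'}T_{\lambda'}\widetilde{\varphi}_m$; each summand is well-defined by the multiplication module structure of Subsection \ref{subs-conv-mult}, the series converges absolutely in $E$ by the uniform bound, and uniqueness together with independence of the choices of $\chi,\psi,r$ follows from Theorem \ref{basic-sequence} and Lemma \ref{bound-bel}.

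For the duality identity in the DTMIB case $E=E_0'$, applying $\langle T_{rm}f,g\rangle = \langle f,T_{-rm}g\rangle$ and the pairing identity $\langle S_\chi(c)\cdot G_m,T_{-rm}g\rangle = \langle S_\chi(c), G_m\cdot T_{-rm}g\rangle$ termwise gives $\langle \widetilde{S}_\varphi(c),g\rangle = \langle S_\chi(c),h\rangle$ with $h := \sum_m G_m(\varphi)\cdot T_{-rm}g$. Using $\omega_{E_0} = \check{\omega}_E$, $\nu_{E_0}=\nu_E$, and the same uniform estimate, one obtains $\|G_m(\varphi)\cdot T_{-rm}g\|_{E_0}\lesssim \omega_E(rm)\,\|\widetilde{\varphi}_m\|_{\mathcal{F}L^1_{\widetilde{\nu}_E}}\,\|g\|_{E_0}$, so that $h$ converges absolutely in $E_0$, proving the claim. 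The main technical obstacle is that a direct treatment of the twist would introduce a modulation factor $\nu_E(y_m)$ growing polynomially with $|rm|$, which the weight $\omega_E$ of the amalgam space cannot compensate; the phase-transfer identity displayed above is precisely what bypasses this, by handing the oscillation off to Corollary \ref{mult-new}, where it is absorbed uniformly in $y$.
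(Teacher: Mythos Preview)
Your proposal is correct and follows essentially the same route as the paper's proof: decompose $\varphi$ by a BUPU, rewrite each piece via the phase--transfer identity as $T_{rm}[S_\chi(c)\cdot(\text{periodised bump})]$, apply Corollary~\ref{mult-new} uniformly in the phase, and sum against the discrete amalgam norm; the duality statement is likewise obtained by moving the periodised factor across the pairing. The only cosmetic difference is that you use $\psi$ for the partition of unity and a plateau $\chi$ equal to~$1$ on its support, whereas the paper takes the BUPU bump itself as the $\chi$ defining $E^B_d(\Lambda)$ and inserts an auxiliary plateau $\psi_1$ to feed Corollary~\ref{mult-new}; you should also note (as the paper does) that for $\varphi\in\mathcal{S}(\R^n)$ the interchange $\sum_\lambda c_\lambda T^B_\lambda\varphi=\sum_m\sum_\lambda c_\lambda T^B_\lambda\varphi_m$ is justified by absolute summability in $\mathcal{S}'(\R^n)$, and that uniqueness of the extension comes from density of $\mathcal{S}(\R^n)$ in the amalgam space rather than from Theorem~\ref{basic-sequence}.
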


\begin{proof} Let $r > 0$ be such that $[-4r,4r]^n \subset U$ and let $\chi \in \mathcal{D}_{[-r,r]^n}$ be such that $\sum_{m \in \Z^n} T_{rm} \chi = 1$ on $\R^n$. Choose $\psi\in \DD_{[-2r,2r]^n}$ such that $\psi = 1$ on $[-r,r]^n$ and $\psi_1 \in \mathcal{D}_{[-3r,3r]^n}$ such that $\psi_1=1$ on $[-2r,2r]^n$. Let $c \in E^B_d(\Lambda)$ and $\varphi\in W(\mathcal{F}L^1_{\widetilde{\nu}_E},L^1_{\omega_E})$ be arbitrary. For each $m \in \Z^n$, we infer
\begin{align}
\label{periodic-formula1}
\sum_{\lambda \in \Lambda} c_\lambda T^B_\lambda (\varphi T_{rm}\chi) &= \sum_{\lambda \in \Lambda}  c_\lambda e^{-2\pi i B\lambda\cdot rm} T_{rm}T^B_\lambda ((T_{-rm}\varphi)\chi ) \\ \nonumber
&=T_{rm}\sum_{\lambda \in \Lambda}  c_\lambda e^{-2\pi i B^trm\cdot\lambda} T^B_\lambda(\chi  (T_{-rm}\varphi) \psi) \\ \nonumber
&= T_{rm}\left(\sum_{\lambda \in \Lambda}  c_\lambda T^B_\lambda \chi \cdot \sum_{{\lambda'} \in \Lambda} e^{-2\pi i B^trm\cdot\lambda'} T_{\lambda'}( (\psi T_{-rm}\varphi) \psi_1 )\right).
\end{align}
Hence, Corollary \ref{mult-new} yields that $\sum_{\lambda \in \Lambda} c_\lambda T^B_\lambda (\varphi T_{rm}\chi) \in E$ and that
\begin{align*}
\sum_{m \in \Z^n} \left\|\sum_{\lambda \in \Lambda} c_\lambda T^B_\lambda (\varphi T_{rm}\chi)\right\|_E& \leq C\|c\|_{E^B_d(\Lambda)}\sum_{m \in \Z^n}\omega_E(rm) \|\psi T_{-rm}\varphi\|_{\mathcal{F}L^1_{\widetilde{\nu}_E}}\\
&= C\|c\|_{E^B_d(\Lambda)}\sum_{m \in \Z^n}\omega_E(rm)\|\varphi T_{rm}\psi\|_{\mathcal{F}L^1_{\widetilde{\nu}_E}}.
\end{align*}
Choose $\psi_2 \in \DD_{[-4r,4r]^n}$ such that $\psi_2 = 1$ on $[-3r,3r]^n$. Then,
\begin{align*}
\sum_{m \in \Z^n}\omega_E(rm)\|\varphi T_{rm}\psi\|_{\mathcal{F}L^1_{\widetilde{\nu}_E}}  &= r^{-n} \sum_{m \in \Z^n} \int_{rm + [-r/2,r/2]^n} \|\varphi T_{rm}\psi\|_{\mathcal{F}L^1_{\widetilde{\nu}_E}}  \omega_E(rm) \dx \\
& \leq C'r^{-n}  \sum_{m \in \Z^n} \int_{rm + [-r/2,r/2]^n} \|\varphi T_x\psi_2T_{rm}\psi\|_{\mathcal{F}L^1_{\widetilde{\nu}_E}}  \omega_E(x) \dx \\
& \leq C'r^{-n} \|\psi\|_{\mathcal{F}L^1_{\widetilde{\nu}_E}} \sum_{m \in \Z^n} \int_{rm + [-r/2,r/2]^n} \|\varphi T_x\psi_2\|_{\mathcal{F}L^1_{\widetilde{\nu}_E}}\omega_E(x) \dx \\
&= C'r^{-n}  \|\psi\|_{\mathcal{F}L^1_{\widetilde{\nu}_E}} \|\varphi\|_{W(\mathcal{F}L^1_{\widetilde{\nu}_E}, L^1_{\omega_E})}.
\end{align*}
We deduce that
\begin{equation}
\label{est-for-partt1}
\sum_{m \in \Z^n} \left\|\sum_{\lambda \in \Lambda} c_\lambda T^B_\lambda (\varphi T_{rm}\chi)\right\|_E \leq C''\|c\|_{E^B_d(\Lambda)} \|\varphi\|_{W(\mathcal{F}L^1_{\widetilde{\nu}_E}, L^1_{\omega_E})}.
\end{equation}
Now suppose that $\varphi \in \mathcal{S}(\R^n)$. Since the double series
$$
\sum_{m \in \Z^n,\, \lambda \in \Lambda} c_\lambda T^B_\lambda (\varphi T_{rm}\chi)
$$
is absolutely summable in $\SSS'(\R^n)$, we have that (cf.\ Lemma \ref{sequence-mother}$(ii)$)
\begin{equation}
\label{periodic-formula1-2}
\sum_{\lambda \in \Lambda} c_\lambda T^B_\lambda \varphi = \sum_{m \in \Z^n} \sum_{\lambda \in \Lambda} c_\lambda T^B_\lambda (\varphi T_{rm}\chi) \quad \mbox{in } \SSS'(\R^n),\quad c\in E^B_d(\Lambda), \varphi\in\SSS(\R^n).
\end{equation}
As $\mathcal{S}(\R^n)$ is dense in $W(\mathcal{F}L^1_{\widetilde{\nu}_E},L^1_{\omega_E})$, the first statement is therefore a consequence of \eqref{est-for-partt1}. Moreover, we obtain that
\begin{equation}\label{the-bilinear-synthesis}
\widetilde{S}_{\varphi}(c)= \sum_{m \in \Z^n} \sum_{\lambda \in \Lambda} c_\lambda T^B_\lambda (\varphi T_{rm}\chi),\quad c\in E^B_d(\Lambda),\, \varphi\in W(\mathcal{F}L^1_{\widetilde{\nu}_E},L^1_{\omega_E}).
\end{equation}
Next, suppose that $E$ is a DTMIB with $E = E_0'$, where $E_0$ is a TMIB. Let $g \in E_0$ and $\varphi\in W(\mathcal{F}L^1_{\widetilde{\nu}_E}, L^1_{\omega_E})$ be arbitrary. Similarly as in the proof of \eqref{est-for-partt1}, one can show that the series
$$
\sum_{m \in \Z^n} \left(T_{-rm}  g\cdot \sum_{{\lambda'} \in \Lambda} e^{-2\pi i B^trm\cdot \lambda'} T_{\lambda'}(\psi T_{-rm}\varphi)\right)
$$
is absolutely summable in $E_0$; denote it by $h\in E_0$. Then, \eqref{periodic-formula1} and \eqref{the-bilinear-synthesis} give \eqref{change-for-element-dual}.
\end{proof}

\begin{corollary} \label{inclusion-amalgam-E}
The space $W(\mathcal{F}L^1_{\widetilde{\nu}_E},L^1_{\omega_E})$ is continuously included into $E$. Consequently, $W(\mathcal{F}L^1_{\widetilde{\nu}_E},L^1_{\check{\omega}_E}) \subset E'$ continuously  if $E$ is a TMIB and $W(\mathcal{F}L^1_{\widetilde{\nu}_E},L^1_{\check{\omega}_{E}}) \subset E_0$ continuously  if $E$ is a DTMIB and $E = E'_0$, where $E_0$ is a TMIB.
\end{corollary}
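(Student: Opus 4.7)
The plan is to derive the inclusion $W(\mathcal{F}L^1_{\widetilde{\nu}_E},L^1_{\omega_E}) \hookrightarrow E$ directly from Proposition \ref{lemma-synthesis} by freezing its sequence variable at a single distinguished element. Specifically, I would take $c = (c_\lambda)_{\lambda \in \Lambda}$ with $c_0 = 1$ and $c_\lambda = 0$ for $\lambda \neq 0$. For any $\eta \in \DD(U) \setminus \{0\}$, one has $S_\eta(c) = T^B_0 \eta = \eta \in \SSS(\R^n) \subset E$, so $c \in E^B_d(\Lambda)$. Hence the continuous bilinear map \eqref{bil-synthesis-mappiing} restricts in its first slot to a continuous linear map $\varphi \mapsto \widetilde{S}_\varphi(c)$ from $W(\mathcal{F}L^1_{\widetilde{\nu}_E}, L^1_{\omega_E})$ into $E$.

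The one genuine computation is the identification $\widetilde{S}_\varphi(c) = \varphi$ for $\varphi \in W(\mathcal{F}L^1_{\widetilde{\nu}_E}, L^1_{\omega_E})$. Plugging this particular $c$ into the explicit formula \eqref{the-bilinear-synthesis} and using the partition of unity $\sum_{m \in \Z^n} T_{rm}\chi = 1$ fixed in the proof of Proposition \ref{lemma-synthesis}, only the $\lambda = 0$ contribution survives and
\[
\widetilde{S}_\varphi(c) = \sum_{m \in \Z^n} T^B_0(\varphi\, T_{rm}\chi) = \varphi \cdot \sum_{m \in \Z^n} T_{rm}\chi = \varphi.
\]
The series on the right converges absolutely in $E$ by \eqref{est-for-partt1}, hence in $\SSS'(\R^n)$, and the pointwise identity $\sum_m T_{rm}\chi = 1$ then identifies the distributional limit with $\varphi$. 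Combining this with the continuity of \eqref{bil-synthesis-mappiing} yields $\|\varphi\|_E \leq C \|\varphi\|_{W(\mathcal{F}L^1_{\widetilde{\nu}_E}, L^1_{\omega_E})}$, which is the desired continuous inclusion.

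For the two "consequently" assertions, I would simply re-run the already-proved inclusion on the appropriate companion space. If $E$ is a TMIB, then $E'$ is a DTMIB with $\omega_{E'} = \check{\omega}_E$ and $\nu_{E'} = \nu_E$, so $\widetilde{\nu}_{E'} = \widetilde{\nu}_E$, and the first part applied to $E'$ gives $W(\mathcal{F}L^1_{\widetilde{\nu}_E}, L^1_{\check{\omega}_E}) \hookrightarrow E'$ continuously. Likewise, if $E = E_0'$ is a DTMIB with $E_0$ a TMIB, then $\omega_{E_0} = \check{\omega}_E$ and $\nu_{E_0} = \nu_E$, and applying the first part to the TMIB $E_0$ yields $W(\mathcal{F}L^1_{\widetilde{\nu}_E}, L^1_{\check{\omega}_E}) \hookrightarrow E_0$ continuously. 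I do not anticipate any substantive obstacle: all the real work has been carried out inside Proposition \ref{lemma-synthesis}, and the corollary is essentially a reading of that bilinear continuity statement at one carefully chosen sequence.
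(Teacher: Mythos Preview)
Your proposal is correct and follows essentially the same approach as the paper: both fix the sequence $c^0$ supported at $0$ with $c^0_0=1$, identify $\widetilde{S}_\varphi(c^0)=\varphi$, and then read off the continuous inclusion from the continuity in Proposition~\ref{lemma-synthesis}. The only cosmetic difference is that the paper verifies $\widetilde{S}_\varphi(c^0)=\varphi$ via density of $\mathcal{S}(\R^n)$ in $W(\mathcal{F}L^1_{\widetilde{\nu}_E},L^1_{\omega_E})$ (since $S_\varphi(c^0)=\varphi$ is immediate for Schwartz $\varphi$), whereas you compute it directly from the explicit formula \eqref{the-bilinear-synthesis} and the partition of unity; both are equally valid.
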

\begin{proof}
Let $c^0 \in \C^\Lambda$ be such that $c^0_0 = 1$ and $c^0_\lambda = 0$ for $\lambda \in \Lambda \backslash \{0\}$. Since $\mathcal{S}(\R^n)$ is dense in $W(\mathcal{F}L^1_{\widetilde{\nu}_E},L^1_{\omega_E})$, Proposition \ref{lemma-synthesis} yields that $\widetilde{S}_{\varphi}(c^0) = \varphi$ for all $\varphi \in W(\mathcal{F}L^1_{\widetilde{\nu}_E},L^1_{\omega_E})$. The result now follows from another application of Proposition \ref{lemma-synthesis}.
\end{proof}
\begin{remark}
From now on, we will denote the continuous extension $\widetilde{S}_{\varphi}(c)$ simply by $S_{\varphi}(c)$. We emphasise that, at the moment, we do not claim that $S_{\varphi}(c)$ is given by $\sum_{\lambda \in \Lambda} c_\lambda T^B_\lambda \varphi$ for general $\varphi\in W(\mathcal{F}L^1_{\widetilde{\nu}_E},L^1_{\omega_E})$ as we do not give any meaning to this series for such $\varphi$. Later on, we will prove that  the series $\sum_{\lambda\in\Lambda} c_{\lambda} T^B_{\lambda}\varphi$  converges to $S_{\varphi}(c)$  in the C\'esaro sense (see Corollary \ref{equ-for-bilinear-mapping-sum} below).
\end{remark}

We now show a sampling inequality for the twisted translation; it should be compared with \cite[Lemma 3.9$(a)$ and Proposition 5.2]{Grochenig} and \cite[Proposition 11.1.4]{Grochenig}.

Corollary \ref{inclusion-amalgam-E} implies that the  bilinear mapping
\begin{equation}
\ast_B :E \times (\theta_{-B}W(\mathcal{F}L^1_{\widetilde{\nu}_E},L^1_{\check{\omega}_E}) )\check{}\rightarrow C_{1/\check{\rho}^{B}_{E}}
\label{amalgam-twisted-conv}
\end{equation}
is well-defined and continuous (cf. the last part of Section \ref{sect-twisted}).

\begin{proposition}\label{sampling}
The bilinear mapping
$$
E\times (\theta_{-B}W(\mathcal{F}L^1_{\widetilde{\nu}_E},L^1_{\check{\omega}_E}))\check{}\rightarrow E^B_d(\Lambda), \, (e,\varphi) \mapsto R_{\varphi}(e) := (e \ast_B \varphi(\lambda))_{\lambda \in \Lambda},
$$
is well-defined and continuous.
\end{proposition}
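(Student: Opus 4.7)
The plan is to bound $\|R_\varphi(e)\|_{E^B_d(\Lambda)}=\|\sum_{\lambda\in\Lambda}(e\ast_B\varphi)(\lambda)\,T^B_\lambda\chi_0\|_E$ for a fixed $\chi_0\in\mathcal D(U)\setminus\{0\}$, by adapting in reverse the strategy used for Proposition~\ref{lemma-synthesis}: there one begins with a sequence and uses a partition-of-unity decomposition of the test function $\varphi$ to localize the synthesis sum; here one begins with a function $\varphi$ in the amalgam space and uses the same decomposition to localize the sampling operation. By the already established continuity \eqref{amalgam-twisted-conv} (which ensures $e\ast_B\varphi\in C_{1/\check\rho^B_E}$) and the density of $\mathcal S(\R^n)$ in $(\theta_{-B}W(\mathcal{F}L^1_{\widetilde\nu_E},L^1_{\check\omega_E}))\check{}$, it suffices to prove the estimate for $\varphi\in\mathcal S(\R^n)$ with the amalgam norm on the right-hand side.

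I would then choose $r>0$ with $[-4r,4r]^n\subset U$ and $\chi\in\mathcal D_{[-r,r]^n}$ such that $\sum_{m\in\Z^n}T_{rm}\chi=1$, and split $\varphi=\sum_m\varphi_m$ with $\varphi_m:=\varphi\,T_{rm}\chi$, so that
\begin{equation*}
\sum_{\lambda\in\Lambda}(e\ast_B\varphi)(\lambda)T^B_\lambda\chi_0=\sum_{m\in\Z^n}H_m,\qquad H_m:=\sum_{\lambda\in\Lambda}(e\ast_B\varphi_m)(\lambda)T^B_\lambda\chi_0 .
\end{equation*}
The crux is to estimate each $H_m$ in the $E$-norm. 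Writing $(e\ast_B\varphi_m)(\lambda)$ via the integral representation of the twisted convolution and exploiting $\operatorname{supp}\varphi_m\subset rm+[-r,r]^n$, I plan to perform the analogue of the algebraic identity \eqref{periodic-formula1}: after a change of variables and factoring the translation $T_{rm}$ out, the combined twist arising from $\ast_B$ and from $T^B_\lambda\chi_0$ should be rewritten so that all residual $\lambda$-dependent phase is absorbed into a single modulation $e^{2\pi i y_m\cdot\lambda'}$ (with $y_m\in\R^n$ depending on $m$), yielding a representation of the form
\begin{equation*}
H_m = T_{rm}\!\Bigl(\widetilde e_m \cdot \sum_{\lambda'\in\Lambda} e^{2\pi i y_m\cdot \lambda'}\,T_{\lambda'}\bigl(f_m\,\psi\bigr)\Bigr),
\end{equation*}
where $\widetilde e_m$ is a twisted translate of $e$ with $\|\widetilde e_m\|_E\le C\,\check\omega_E(rm)\|e\|_E$, $\psi\in\mathcal S(\R^n)$ is built from $\chi_0$, and $f_m\in\mathcal{F}L^1_{\widetilde\nu_E}$ is essentially $T_{-rm}\varphi_m$ with $\|f_m\|_{\mathcal{F}L^1_{\widetilde\nu_E}}\lesssim\|\varphi_m\|_{\mathcal{F}L^1_{\widetilde\nu_E}}$. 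Corollary~\ref{mult-new} then yields, uniformly in~$y_m$,
\begin{equation*}
\|H_m\|_E \le C\,\check\omega_E(rm)\,\|\varphi_m\|_{\mathcal{F}L^1_{\widetilde\nu_E}}\,\|e\|_E .
\end{equation*}

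Summing over $m$ and estimating $\sum_m\check\omega_E(rm)\|\varphi_m\|_{\mathcal{F}L^1_{\widetilde\nu_E}}$ by the amalgam norm of $\varphi$ exactly as in the concluding calculation of the proof of Proposition~\ref{lemma-synthesis} (with $\omega_E$ replaced by $\check\omega_E$, to match the reflected/twisted space $(\theta_{-B}W(\mathcal{F}L^1_{\widetilde\nu_E},L^1_{\check\omega_E}))\check{}$) completes the proof. The main obstacle is the algebraic manipulation producing the clean form above: one has to combine the $\lambda$-dependent phases coming from the twisted convolution $\ast_B$ and from the twisted translation $T^B_\lambda$ so that, after the lattice reindexing dictated by the translation $T_{rm}$, the whole $\lambda$-sum is reduced to a multiplication of a translate of $e$ by a $\Lambda$-periodic distribution of Wiener--Beurling type, to which Corollary~\ref{mult-new} applies uniformly in $y_m$.
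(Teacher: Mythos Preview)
Your overall strategy—localize, apply Corollary~\ref{mult-new}, then sum—is in the right spirit and matches what the paper ultimately does. The gap is in the central algebraic step: the identity you postulate,
\[
H_m=T_{rm}\Bigl(\widetilde e_m\cdot\sum_{\lambda'\in\Lambda}e^{2\pi i y_m\cdot\lambda'}T_{\lambda'}(f_m\psi)\Bigr)
\]
with $\widetilde e_m$ a \emph{single} twisted translate of $e$, cannot hold. The scalar coefficients $c_\lambda^{(m)}=(e\ast_B\varphi_m)(\lambda)$ are integrals of $e$ against translates of a bump; even after exploiting $\operatorname{supp}\varphi_m\subset rm+[-r,r]^n$ and changing variables you are left with an integral over $z\in[-r,r]^n$ inside which, for each fixed $z$, the $\lambda$-sum factors into a product of a translate of $e$ with a $\Lambda$-periodized function. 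That integral over $z$ does not collapse, so $H_m$ is an integral of products, not one product. The identity \eqref{periodic-formula1} you invoke works in the synthesis direction because there the coefficients $c_\lambda$ are given and $T^B_\lambda$ acts on a pointwise product $\varphi T_{rm}\chi$, which splits via $T^B_\lambda(fg)=T^B_\lambda f\cdot T_\lambda g$; here the coefficients themselves hide a convolution integral, and there is no analogous splitting.

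The paper deals with this by proving directly the integral identity (Lemma~\ref{identity-for-sampling})
\[
\sum_{\lambda\in\Lambda}(e\ast_B\varphi)(\lambda)\,T^B_\lambda\chi=\int_{\R^n}T_x e\cdot\sum_{\lambda\in\Lambda}e^{-2\pi i B^t x\cdot\lambda}T_\lambda\bigl(T_x(\theta_B(\check\varphi))\chi\bigr)\,\dx,
\]
valid in $\mathcal S'(\R^n)$ for $\varphi\in\mathcal S(\R^n)$. No partition of unity of $\varphi$ is needed: the compact support of $\chi\in\mathcal D(U)$ already localizes, since $T_x(\theta_B(\check\varphi))\chi$ picks out the piece of $\theta_B(\check\varphi)$ near $-x$. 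One then bounds the $E$-norm of the integrand by Corollary~\ref{mult-new} (uniformly in the phase parameter $x$), obtaining $C\|e\|_E\,\omega_E(x)\,\|T_x(\theta_B(\check\varphi))\chi\|_{\mathcal FL^1_{\widetilde\nu_E}}$, and integrates in $x$; the resulting integral is exactly $\|\varphi\|_{(\theta_{-B}W(\mathcal FL^1_{\widetilde\nu_E},L^1_{\check\omega_E}))\check{}}$. A separate measurability argument (continuity for $\varphi\in\mathcal S$, then pointwise limits for general $\varphi$) is needed to justify the $E$-valued integral. If you want to salvage your decomposition, applying Lemma~\ref{identity-for-sampling} to each $\varphi_m$ gives $H_m$ as an integral over a bounded neighbourhood of $rm$, after which your summation step goes through—but at that point you have reproduced the paper's proof with an extra, unnecessary discretization layer.
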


The proof of Proposition \ref{sampling} is based on the identity shown in the next lemma.
\begin{lemma}\label{identity-for-sampling}
For all $f\in \SSS'(\R^n)$, $\varphi\in\SSS(\R^n)$ and $\chi\in\DD(U)$, it holds that
\begin{equation}
\sum_{\lambda \in \Lambda} f \ast_B \varphi(\lambda) T^B_\lambda \chi  = \int_{\R^n} T_x f \cdot \sum_{\lambda \in \Lambda} e^{-2\pi i B^tx\cdot \lambda} T_{\lambda}(T_x(\theta_B(\check{\varphi})) \chi) \dx\quad \mbox{in } \SSS'(\R^n),
\label{identity-sampling1}
\end{equation}
where the integral should be interpreted as an $\mathcal{S}'(\R^n)$-valued Pettis integral with respect to the weak-$\ast$ topology on $\mathcal{S}'(\R^n)$.
\end{lemma}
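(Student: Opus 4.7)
The plan is to test both sides of \eqref{identity-sampling1} against an arbitrary $\psi \in \SSS(\R^n)$ and reduce the claim to an identity in $\SSS(\R^n)$. Set
\[
G_\varphi(x) := \sum_{\lambda \in \Lambda} e^{-2\pi i B^tx\cdot\lambda}\, T_\lambda\bigl(T_x(\theta_B(\check{\varphi}))\cdot\chi\bigr).
\]
Because $\chi \in \DD(U)$ and the translates $\lambda+U$ are pairwise disjoint, at each point the defining sum has at most one non-zero term, so $G_\varphi(x) \in C_b^\infty(\R^n)$ and $T_xf \cdot G_\varphi(x)$ is well-defined in $\SSS'(\R^n)$ with $\langle T_xf \cdot G_\varphi(x), \psi\rangle = \langle f, T_{-x}(G_\varphi(x)\psi)\rangle$. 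The left-hand side of \eqref{identity-sampling1} paired with $\psi$ equals $\sum_\lambda (f\ast_B\varphi)(\lambda)\, \langle T^B_\lambda\chi, \psi\rangle$ (absolute convergence, since $((f\ast_B\varphi)(\lambda))_\lambda \in \SSS'_d(\Lambda)$ and Lemma \ref{sequence-mother}$(ii)$ applies). Hence, once the $f$-pairing is interchanged with the integral, the whole claim reduces to the identity
\[
\int_{\R^n} T_{-x}(G_\varphi(x)\psi)\, \dx = \sum_{\lambda\in\Lambda} \langle T^B_\lambda\chi, \psi\rangle\, T^{-B}_\lambda \theta_B(\check{\varphi}) \quad\text{in } \SSS(\R^n).
\]

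First, I would establish Bochner integrability of $x \mapsto T_{-x}(G_\varphi(x)\psi)$ as an $\SSS(\R^n)$-valued function. The explicit expression
\[
T_{-x}(G_\varphi(x)\psi)(s) = \sum_\lambda e^{-2\pi i B^tx\cdot\lambda}\, \theta_B(\check{\varphi})(s-\lambda)\, \chi(s+x-\lambda)\, \psi(s+x),
\]
together with the support disjointness, shows that for each $(x,s)$ at most one lattice point contributes and it satisfies $|\lambda - (s+x)| \leq \operatorname{diam}(\supp\chi) =: R$. Thus $|\theta_B(\check{\varphi})(s-\lambda)| = |\varphi(\lambda-s)| \leq \sup_{|y|\leq R}|\varphi(x-y)|$; combined with the rapid decay of $\psi$, the elementary bound $(1+|s|)^N \leq (1+|s+x|)^N(1+|x|)^N$, and Leibniz's rule for derivatives (which introduces polynomial factors in $x$ from differentiating the phase in $\theta_B$), one obtains $\SSS$-seminorm estimates of the form $\|T_{-x}(G_\varphi(x)\psi)\|_{\SSS^N} \leq C_N (1+|x|)^{M} \sup_{|y|\leq R}|\varphi(x-y)|$ that are integrable in $x$. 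This certifies that the right-hand side of \eqref{identity-sampling1} is a well-defined weak-$\ast$ Pettis integral in $\SSS'(\R^n)$ and justifies both the exchange of the integral with the $f$-pairing and, by Fubini, the exchange of the sum over $\lambda$ with the integral over $x$.

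The remaining identity in $\SSS(\R^n)$ is verified term by term. Substituting $u = s+x$ and using $\chi(u-\lambda)\, e^{-2\pi i B\lambda\cdot u} = e^{-2\pi i B\lambda\cdot\lambda}\, T^B_\lambda\chi(u)$ yields, for each $\lambda$,
\[
\int_{\R^n} e^{-2\pi i B^tx\cdot\lambda}\, \psi(s+x)\, \chi(s+x-\lambda)\, \dx = e^{2\pi i B\lambda\cdot(s-\lambda)}\, \langle T^B_\lambda\chi, \psi\rangle,
\]
so the $\lambda$-th contribution becomes $\theta_B(\check{\varphi})(s-\lambda)\, e^{2\pi i B\lambda\cdot(s-\lambda)}\, \langle T^B_\lambda\chi, \psi\rangle = \langle T^B_\lambda\chi, \psi\rangle\, T^{-B}_\lambda\theta_B(\check{\varphi})(s)$ by the very definition of the twisted translation. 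The summation over $\lambda$ converges absolutely in $\SSS(\R^n)$ since $(\langle T^B_\lambda\chi, \psi\rangle)_\lambda \in \SSS_d(\Lambda)$ (the compact support of $\chi$ and the rapid decay of $\psi$ yield rapidly decreasing coefficients), so Lemma \ref{sequence-mother}$(i)$ applies. The main obstacle is the careful bookkeeping of the phase factors in the term-by-term identification, together with producing the integrable dominant needed for Bochner integrability in the Fréchet space $\SSS(\R^n)$; everything else is routine Fubini and the definitions from Section \ref{sect-twisted}.
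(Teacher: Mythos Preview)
Your proposal is correct and follows essentially the same line as the paper's proof: pair both sides with an arbitrary $\psi\in\SSS(\R^n)$ and reduce the identity to an explicit computation over $\lambda$ after justifying the interchange of the $f$-pairing with the integral/sum. The only notable difference is technical: the paper justifies the swap by the Schwartz tensor-product trick $\langle f(t)\otimes 1(x),\,\cdot\,\rangle$ (i.e., viewing the double expression as a pairing in $\SSS'(\R^{2n})$ and applying Fubini for distributions), whereas you argue via Bochner integrability of $x\mapsto T_{-x}(G_\varphi(x)\psi)$ in the Fr\'echet space $\SSS(\R^n)$, which in fact yields a slightly stronger statement and makes the subsequent term-by-term identification cleaner.
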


\begin{proof}  Note that the mapping
$$
\R^n \rightarrow  \DD_{L^{\infty}_{(1+|\cdot|)^{-1}}}(\R^n), \, x \mapsto  \sum_{\lambda \in \Lambda} e^{-2\pi i B^tx\cdot\lambda} T_{\lambda}(T_x(\theta_B(\check{\varphi})) \chi)
$$
is continuous. This implies that the mapping
$$
\R^n \rightarrow \mathcal{S}'(\R^n), \, x \mapsto T_x f \cdot \sum_{\lambda \in \Lambda} e^{-2\pi i B^tx\cdot\lambda} T_{\lambda}(T_x(\theta_B(\check{\varphi})) \chi)
$$
is continuous with respect to the weak-$\ast$ topology on $\mathcal{S}'(\R^n)$. Hence, by Lemma \ref{sequence-mother}$(ii)$, we only need to show that
$$
 \sum_{\lambda \in \Lambda} \int_{\R^n}  f \ast_B \varphi(\lambda) T^B_\lambda\chi(x) \psi(x) \dx  = \int_{\R^n} \left\langle T_x f \cdot \sum_{\lambda \in \Lambda} e^{-2\pi i B^tx\cdot\lambda} T_{\lambda}(T_x(\theta_B(\check{\varphi})) \chi), \psi \right\rangle \dx
$$
for all $\psi \in \mathcal{S}(\R^n)$. We have that
\begin{align*}
&\sum_{\lambda \in \Lambda} \int_{\R^n}  f \ast_B \varphi(\lambda) T^B_\lambda \chi(x) \psi(x) \dx \\
&= \sum_{\lambda \in \Lambda} \langle f(t), T^{-B}_\lambda (\theta_B(\check{\varphi}))(t) \rangle  \int_{\R^n}T^B_\lambda \chi(x) \psi(x) \dx \\
&=\sum_{\lambda \in \Lambda} \left\langle f(t), \int_{\R^n}T^{-B}_\lambda (\theta_B(\check{\varphi}))(t) T^B_\lambda \chi(t+x) \psi(t+x) \dx \right\rangle.
\end{align*}
As the function
$$
\R^{2n}\rightarrow \C,\, (t,x)\mapsto T^{-B}_\lambda (\theta_B(\check{\varphi}))(t) T^B_\lambda \chi(t+x) \psi(t+x),
$$
belongs to $\SSS(\R^{2n})$, we infer that
\begin{align*}
&\sum_{\lambda \in \Lambda}\int_{\R^n} f \ast_B \varphi(\lambda) T^B_\lambda \chi(x) \psi(x) \dx \\
&= \sum_{\lambda \in \Lambda} \langle f(t) \otimes 1(x), T^{-B}_\lambda (\theta_B(\check{\varphi}))(t) T^B_\lambda \chi(t+x) \psi(t+x)  \rangle \\
&= \sum_{\lambda \in \Lambda} \int_{\R^n} \langle f(t), T^{-B}_\lambda (\theta_B(\check{\varphi}))(t) T^B_\lambda \chi(t+x) \psi(t+x) \rangle \dx \\
&= \int_{\R^n} \sum_{\lambda \in \Lambda} \langle f, T^{-B}_\lambda (\theta_B(\check{\varphi})) T_{-x} (T^B_\lambda \chi \psi) \rangle \dx \\
&= \int_{\R^n}  \sum_{\lambda \in \Lambda} \langle T_xf,   T_xT^{-B}_\lambda(\theta_B(\check{\varphi}))T^B_\lambda \chi \psi \rangle \dx\\
&= \int_{\R^n} \sum_{\lambda \in \Lambda}  \langle T_xf, e^{-2\pi i B^tx\cdot \lambda} T_{\lambda}(T_x(\theta_B(\check{\varphi})) \chi) \psi \rangle \dx \\
&= \int_{\R^n} \left\langle T_xf \cdot \sum_{\lambda \in \Lambda} e^{-2\pi i B^tx\cdot \lambda}T_{\lambda}(T_x(\theta_B(\check{\varphi})) \chi),  \psi \right\rangle \dx.
\end{align*}
This completes the proof of the lemma.
\end{proof}
\begin{proof}[Proof of Proposition \ref{sampling}] As $e\ast_B\varphi$ is continuous, we can evaluate it at $\lambda\in\Lambda$. Fix $\chi \in \mathcal{D}(U) \backslash \{0\}$. Since $\mathcal{S}(\R^n)$ is dense in $W(\mathcal{F}L^1_{\widetilde{\nu}_E},L^1_{\check{\omega}_E})$, Lemma \ref{identity-for-sampling} and the continuity of the mapping \eqref{amalgam-twisted-conv} imply that it suffices to show that the bilinear mapping
\begin{gather}
E\times (\theta_{-B}W(\mathcal{F}L^1_{\widetilde{\nu}_E},L^1_{\check{\omega}_E}))\check{}\rightarrow E,\nonumber \\
 \, (e,\varphi) \mapsto  \int_{\R^n} T_x e \cdot \sum_{\lambda \in \Lambda} e^{-2\pi i B^tx\cdot \lambda} T_{\lambda}(T_x(\theta_B(\check{\varphi})) \chi) \dx
\label{mapping-proof-sampling11}
\end{gather}
is well-defined and continuous, where  the integral should be interpreted as an $E$-valued Bochner integral if $E$ is a TMIB and as an $E$-valued Pettis integral if $E$ is a DTMIB.
Let  $e \in E$ and $\varphi \in (\theta_{-B}W(\mathcal{F}L^1_{\widetilde{\nu}_E},L^1_{\check{\omega}_E}))\check{}$ be arbitrary. Choose $\chi_1\in\DD(U)$ such that $\chi_1=1$ on $\operatorname{supp}\chi$. Then,
$$
\sum_{\lambda \in \Lambda} e^{-2\pi i B^tx\cdot \lambda} T_{\lambda}(T_x(\theta_B(\check{\varphi})) \chi) =   \sum_{\lambda \in \Lambda} e^{-2\pi i B^tx\cdot \lambda} T_{\lambda}(T_x(\theta_B(\check{\varphi})) \chi \chi_1), \qquad x \in \R^n.
$$
Hence, Corollary \ref{mult-new} verifies that, for $x \in \R^n$ fixed, the integrand in \eqref{mapping-proof-sampling11} is a well-defined element of $E$.  \\
\\
\noindent \textbf{Claim.} The mapping
\begin{equation}\label{the-mapping-for-the-integral-for-meas}
\R^n \rightarrow E,\, x\mapsto T_xe\cdot \sum_{\lambda\in\Lambda} e^{-2\pi i B^tx\cdot\lambda} T_{\lambda}(T_x(\theta_B(\check{\varphi}))\chi),
\end{equation}
is strongly measurable if $E$ is a TMIB and weak-$\ast$ measurable if $E$ is a DTMIB.\\
\\
\noindent Assuming the validity of the claim, Corollary \ref{mult-new} gives the bound
\begin{align*}
&\int_{\R^n} \left\|T_x e \cdot \sum_{\lambda \in \Lambda} e^{-2\pi i B^tx\cdot \lambda} T_{\lambda}((T_x(\theta_B(\check{\varphi})) \chi)\chi_1)\right\|_E \dx\\
&\leq C \|e\|_E \int_{\R^n} \omega_E(x) \|T_x(\theta_B(\check{\varphi})) \chi \|_{\mathcal{F}L^1_{\widetilde{\nu}_E}} \dx=   C \|e\|_E \| \varphi\|_{(\theta_{-B}W(\mathcal{F}L^1_{\widetilde{\nu}_E},L^1_{\check{\omega}_E}))\check{}},
\end{align*}
whence the mapping \eqref{mapping-proof-sampling11} is well-defined and continuous. It remains to prove the claim. First suppose that $\varphi\in\SSS(\R^n)$. For each $x\in\R^n$, it holds that
$$
\sum_{\lambda\in\Lambda} e^{-2\pi i B^tx\cdot\lambda} T_{\lambda}(T_x(\theta_B(\check{\varphi}))\chi)\in \DD_{L^{\infty}}(\R^n)\quad \mbox{and}\quad \sum_{\lambda\in\Lambda} T_{\lambda}(T^{-B^t}_x(\theta_B(\check{\varphi}))\chi)\in \DD_{L^{\infty}}(\R^n).
$$
We infer that, for all $\psi\in\SSS(\R^n)$ and $x\in\R^n$,
\begin{align*}
&\left\langle T_xe\cdot \sum_{\lambda\in\Lambda} e^{-2\pi i B^tx\cdot\lambda} T_{\lambda}(T_x(\theta_B(\check{\varphi}))\chi),\psi\right\rangle\\
&=\left\langle T_xe,\psi\sum_{\lambda\in\Lambda} e^{-2\pi i B^tx\cdot\lambda} T_{\lambda}(T_x(\theta_B(\check{\varphi}))\chi)\right\rangle\\
&=\left\langle T_xe,e^{2\pi i B^tx\cdot x} M_{-B^tx}\left(\psi\sum_{\lambda\in\Lambda} T_{\lambda}(T^{-B^t}_x(\theta_B(\check{\varphi}))\chi)\right)\right\rangle\\
&=\left\langle T^{B^t}_xe\cdot \sum_{\lambda\in\Lambda} T_{\lambda}(T^{-B^t}_x(\theta_B(\check{\varphi}))\chi),\psi\right\rangle,
\end{align*}
and, consequently,
\begin{equation}
\label{identity}
T_xe\cdot \sum_{\lambda\in\Lambda} e^{-2\pi i B^tx\cdot\lambda} T_{\lambda}(T_x(\theta_B(\check{\varphi}))\chi)=T^{B^t}_xe\cdot \sum_{\lambda\in\Lambda} T_{\lambda}(T^{-B^t}_x(\theta_B(\check{\varphi}))\chi).
\end{equation}
Lemma \ref{mea-spa-sub} implies that
$$
\sum_{\lambda\in\Lambda} T_{\lambda}(T^{-B^t}_x(\theta_B(\check{\varphi}))\chi) = \sum_{\lambda\in\Lambda} T_{\lambda}(T^{-B^t}_x(\theta_B(\check{\varphi}))\chi \chi_1)\in \mathcal{F}\mathcal{M}^1_{\widetilde{\nu}_E}
$$
and therefore the multiplication on the right-hand side in \eqref{identity} may be interpreted as the multiplication on $E\times \mathcal{F}\mathcal{M}^1_{\widetilde{\nu}_E}$.
Since the mapping $\R^n\rightarrow \mathcal{F}L^1_{\widetilde{\nu}_E}$, $x\mapsto T^{-B^t}_x(\theta_B(\check{\varphi}))$, is continuous, Lemma \ref{mea-spa-sub} gives the continuity of the mapping
$$
\R^n\rightarrow \mathcal{F}\mathcal{M}^1_{\widetilde{\nu}_E},\, x\mapsto \sum_{\lambda\in\Lambda} T_{\lambda}(T^{-B^t}_x(\theta_B(\check{\varphi}))\chi),
$$
which, in turn, yields that the mapping
$$
\R^n\rightarrow E,\, x\mapsto T^{B^t}_xe\cdot \sum_{\lambda\in\Lambda} T_{\lambda}(T^{-B^t}_x(\theta_B(\check{\varphi}))\chi),
$$
is continuous if $E$ is a TMIB and continuous with respect to the weak-$\ast$ topology on $E$ if $E$ is a DTMIB. Thus, if $\varphi\in\SSS(\R^n)$, the mapping \eqref{the-mapping-for-the-integral-for-meas} is continuous if $E$ is a TMIB and continuous with respect to the weak-$\ast$ topology on $E$ if $E$ is a DTMIB. Now let $\varphi\in (\theta_{-B}W(\mathcal{F}L^1_{\widetilde{\nu}_E}, L^1_{\check{\omega}_E}))\check{}$ be arbitrary. Choose a sequence $(\varphi_j)_{j\in\N}\subset\SSS(\R^n)$ that converges to $\varphi$ in $(\theta_{-B}W(\mathcal{F}L^1_{\widetilde{\nu}_E}, L^1_{\check{\omega}_E}))\check{}$.  Since the mapping $W(\mathcal{F}L^1_{\widetilde{\nu}_E}, L^1_{\check{\omega}_E})\rightarrow \mathcal{F}L^1_{\widetilde{\nu}_E}$, $\psi\mapsto \psi\chi$, is continuous, Corollary \ref{mult-new} implies that the mappings
$$
\R^n\rightarrow E,\, x\mapsto T_xe\cdot \sum_{\lambda\in\Lambda} e^{-2\pi i B^tx\cdot\lambda} T_{\lambda}(T_x(\theta_B(\check{\varphi}_j))\chi\chi_1),\quad j\in\N,
$$
converge pointwise to the mapping \eqref{the-mapping-for-the-integral-for-meas} in $E$ if $E$ is a TMIB and in the weak-$\ast$ topology of $E$ if $E$ is a DTMIB. This implies the claim.
\end{proof}

\begin{corollary}\label{complemented}
Let $\chi, \psi \in \mathcal{D}(U) \backslash \{0\}$ be such that $(\theta_B(\psi), \overline{\chi})_{L^2} \neq 0$. Then, the mappings
$$
S_{\chi} : E^B_d(\Lambda) \rightarrow  E \qquad \mbox{and} \qquad R_{\check{\psi}}: E \rightarrow  E^B_d(\Lambda)
$$
are continuous and
\begin{equation}
R_{\check{\psi}} \circ S_{\chi} = (\theta_B(\psi), \overline{\chi})_{L^2} \operatorname{id}_{E^B_d(\Lambda)}.
\label{complemented-seq}
\end{equation}
In particular, $S_{\chi}(E^B_d(\Lambda))$ is a complemented subspace of $E$.
\end{corollary}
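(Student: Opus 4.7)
The plan is to obtain the continuity of both maps as direct consequences of the preceding results, and then to verify the key identity by a local computation that exploits the disjointness condition on $\{\lambda+U\,|\,\lambda\in\Lambda\}$.

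First I would note that $S_\chi:E^B_d(\Lambda)\to E$ is continuous by Proposition \ref{lemma-synthesis}, applied with the Schwartz function $\chi\in \mathcal{D}(U)\subset\mathcal{S}(\R^n)\subset W(\mathcal{F}L^1_{\widetilde{\nu}_E},L^1_{\omega_E})$. Similarly, $R_{\check\psi}:E\to E^B_d(\Lambda)$ is continuous by Proposition \ref{sampling}, since $\check\psi\in\mathcal{S}(\R^n)$ lies in $(\theta_{-B}W(\mathcal{F}L^1_{\widetilde{\nu}_E},L^1_{\check\omega_E}))\check{}$.

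The core of the proof is the identity \eqref{complemented-seq}. Fix $c\in E^B_d(\Lambda)$ and $\mu\in\Lambda$. By the very definition of twisted convolution with a Schwartz function (see \eqref{def-two}),
$$
R_{\check\psi}(S_\chi(c))_\mu = S_\chi(c) \ast_B \check\psi(\mu) = \langle S_\chi(c),\, T^{-B}_\mu \theta_B(\psi)\rangle,
$$
where on the right we use that $\check{\check\psi}=\psi$. Now $T^{-B}_\mu \theta_B(\psi)$ has support in $\mu+U$, and by Definition \ref{def-of-seq-spa} the distribution $S_\chi(c)$ is represented in $\mathcal{D}'(\R^n)$ by the series $\sum_\lambda c_\lambda T^B_\lambda\chi$, whose $\lambda$-th term has support in $\lambda+U$. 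Since the translates $\{\lambda+U\}_{\lambda\in\Lambda}$ are pairwise disjoint, only the term with $\lambda=\mu$ pairs nontrivially with $T^{-B}_\mu\theta_B(\psi)$, giving
$$
R_{\check\psi}(S_\chi(c))_\mu = c_\mu \int_{\R^n} T^B_\mu\chi(t)\, T^{-B}_\mu\theta_B(\psi)(t)\,dt.
$$
A direct unwinding of $T^B_\mu$ and $T^{-B}_\mu$ cancels the phase factors $e^{\mp 2\pi i B\mu\cdot(t-\mu)}$, and after the substitution $s=t-\mu$ the integral reduces to $\int \chi(s)\theta_B(\psi)(s)\,ds=(\theta_B(\psi),\overline\chi)_{L^2}$. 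This yields \eqref{complemented-seq}.

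For the final assertion, the identity \eqref{complemented-seq} shows that
$$
P := \frac{1}{(\theta_B(\psi),\overline\chi)_{L^2}}\, S_\chi\circ R_{\check\psi}: E\to E
$$
is a continuous projection with range contained in $S_\chi(E^B_d(\Lambda))$, and $P$ acts as the identity on that range since $R_{\check\psi}\circ S_\chi$ is a scalar multiple of the identity on $E^B_d(\Lambda)$. Hence $S_\chi(E^B_d(\Lambda))$ is complemented in $E$.

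The only potentially delicate point is the interchange used to reduce the pairing to a single term. This is legitimate because $T^{-B}_\mu\theta_B(\psi)\in\mathcal{D}(\mu+U)$ and the defining series for $S_\chi(c)$ is understood in $\mathcal{D}'(\R^n)$ via the continuous inclusion $E\hookrightarrow\mathcal{D}'(\R^n)$; testing against a function supported in $\mu+U$ automatically isolates the single contributing summand by the disjointness hypothesis on $U$, so no convergence issue arises.
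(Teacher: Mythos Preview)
Your proof is correct and follows the same approach as the paper's own proof, which simply cites the definition of $E^B_d(\Lambda)$ for the continuity of $S_\chi$, Proposition~\ref{sampling} for the continuity of $R_{\check\psi}$, and says the identity \eqref{complemented-seq} ``follows from a straightforward computation.'' You have carried out that computation explicitly and correctly; the only minor difference is that the paper notes $S_\chi$ is continuous directly from the definition of the norm on $E^B_d(\Lambda)$ (it is in fact an isometry for the choice $\|\cdot\|_{E^B_{d,\chi}(\Lambda)}$), whereas you invoke Proposition~\ref{lemma-synthesis}, which is also valid.
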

\begin{proof}
The mapping $S_{\chi} : E^B_d(\Lambda) \rightarrow  E$ is continuous by definition of $E^B_d(\Lambda)$ and the continuity of the mapping $R_{\check{\psi}}: E \rightarrow  E^B_d(\Lambda)$ has been shown in Proposition \ref{sampling}. The identity \eqref{complemented-seq} follows from a straightforward computation.
\end{proof}
We end this subsection by giving two examples; further examples shall be discussed in Subsection \ref{sect-examples} below.
\begin{examples} \label{examples-sequences} $(i)$ Let $E$ be a solid TMIB or DTMIB. Fix a bounded open neighbourhood of the origin $W$ with $\overline{W}\subset U$. We define the Banach space
$$
E_d(\Lambda) := \left\{  c  \in \C^\Lambda  \, \Big| \,\sum_{\lambda \in \Lambda} c_\lambda  1_{\lambda+W} \in E \right\}
$$
with norm
$
\| c \| :=  \| \sum_{\lambda \in \Lambda} |c_\lambda|  1_{\lambda+W} \|_E.
$
Note that $E_d(\Lambda)$ is solid. We have that $E^B_d(\Lambda) = E_d(\Lambda)$ topologically for all real-valued $n \times n$-matrices $B$. Hence, in the solid case, our definition coincides with the standard one (cf. \cite[Definition 3.4]{F-G1}). Let $w$ be a polynomially bounded weight function on $\R^n$. For $1 \leq p \leq \infty$ we define $\ell^p_w(\Lambda)$ as the Banach space consisting of all $c \in \C^\Lambda$ such that $ \|c\|_{\ell^p_w(\Lambda)} := \| (c_\lambda w(\lambda))_{\lambda \in \Lambda} \|_{\ell^p(\Lambda)} < \infty$. We define $c_{0,w}(\Lambda)$ as the closed subspace of $\ell^{\infty}_w(\Lambda)$ consisting of all $c\in\ell^{\infty}_w(\Lambda)$ satisfying the following property: For every $\varepsilon>0$ there is a finite subset $\Lambda^{(0)}$ of  $\Lambda$ such that $\sup_{\lambda\in\Lambda\backslash\Lambda^{(0)}}|c_{\lambda}| w(\lambda)\leq \varepsilon$. Then, $(L^p_w)_d(\Lambda) = \ell^p_w(\Lambda)$; furthermore, $(C_{0,w})^B_d(\Lambda)=c_{0,w}(\Lambda)$ for all real-valued $n\times n$-matrices $B$. A similar statement holds for the weighted mixed-norm spaces considered in Example \ref{examples-TMIB}$(i)$.\\ \\
\noindent $(ii)$ Let $E$ be a solid TMIB or DTMIB. We wish to determine  $(\mathcal{F}E)^0_d(\Lambda)$. We define $E(\R^n/ \Lambda^\perp)$ as the Banach space consisting of all $\Lambda^\perp$-periodic elements $f \in E_{\operatorname{loc}}$   with norm
$$
\| f \|_{E(\R^n/ \Lambda^\perp)} := \| f  1_{I_{\Lambda^\perp}}\|_E.
$$
Since $E \subset L^1_{\operatorname{loc}}(\R^n)$, we have that $E(\R^n/ \Lambda^\perp) \subset L^1(\R^n/ \Lambda^\perp)$. As customary, we define the Fourier coefficients of an element $f \in  L^1(\R^n/ \Lambda^\perp)$ as
$$
c_\lambda(f) =  \frac{1}{\operatorname*{vol}(\Lambda^\perp)} \int_{I_{\Lambda^\perp}} f(x) e^{-2\pi i \lambda \cdot x} \dx, \qquad \lambda \in \Lambda.
$$
We then have:
\begin{proposition}\label{Fourier-coeff-seq}
Let $E$ be a solid TMIB or DTMIB. Then,
\begin{equation}
E(\R^n/ \Lambda^\perp) \rightarrow (\mathcal{F}E)^0_d(\Lambda), \, f \mapsto (c_\lambda(f))_{\lambda \in \Lambda}
\label{fourier-series-repr}
\end{equation}
is a topological isomorphism.
\end{proposition}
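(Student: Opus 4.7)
Let $\chi \in \mathcal{D}(U) \setminus \{0\}$ be the window used in the definition of $(\mathcal{F}E)^0_d(\Lambda)$ and set $g_0 := \mathcal{F}^{-1}(\chi) \in \SSS(\R^n)$. The identity $\mathcal{F}^{-1}(T_\lambda \chi) = M_\lambda g_0 = e^{2\pi i \lambda \cdot x} g_0$ shows that, for every $c \in \SSS'_d(\Lambda)$ and the associated $\Lambda^\perp$-periodic distribution $f := \sum_{\lambda \in \Lambda} c_\lambda e^{2\pi i \lambda \cdot x} \in \SSS'(\R^n)$,
\begin{equation}\label{planeq1}
\sum_{\lambda \in \Lambda} c_\lambda T_\lambda \chi = \mathcal{F}(f \cdot g_0)\qquad \text{in } \SSS'(\R^n).
\end{equation}
Consequently $c \in (\mathcal{F}E)^0_d(\Lambda)$ if and only if $fg_0 \in E$, and the two norms coincide; if moreover $f \in L^1_{\loc}(\R^n)$, then by uniqueness of Fourier series $c_\lambda(f) = c_\lambda$. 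The whole proof therefore reduces to showing that, for a $\Lambda^\perp$-periodic distribution $f$, the quantities $\|fg_0\|_E$ and $\|f \cdot 1_{I_{\Lambda^\perp}}\|_E$ are equivalent (and that finiteness of either one forces $f \in L^1_{\loc}(\R^n)$).

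\medskip

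\noindent \textbf{Forward direction.} Assume $f \in E(\R^n/\Lambda^\perp)$. Decompose $fg_0 = \sum_{\mu \in \Lambda^\perp}(fg_0) \cdot 1_{\mu + I_{\Lambda^\perp}}$. Periodicity of $f$ rewrites the $\mu$th summand as $T_\mu(f \cdot T_{-\mu}g_0 \cdot 1_{I_{\Lambda^\perp}})$, whose $E$-norm is, by solidity, at most $\omega_E(\mu)\, \sup_{x \in I_{\Lambda^\perp}}|g_0(x+\mu)|\cdot \|f\|_{E(\R^n/\Lambda^\perp)}$. Since $g_0 \in \SSS(\R^n)$ decays faster than any polynomial while $\omega_E$ grows only polynomially, the $\mu$-series is absolutely summable in $E$, yielding $fg_0 \in E$ with $\|fg_0\|_E \leq C\|f\|_{E(\R^n/\Lambda^\perp)}$.

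\medskip

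\noindent \textbf{Backward direction (main obstacle).} Assume $c \in (\mathcal{F}E)^0_d(\Lambda)$, so $fg_0 \in E$ by \eqref{planeq1}. The difficulty is that $g_0$ may vanish somewhere on $I_{\Lambda^\perp}$, so $f$ cannot be recovered from $fg_0$ by a single pointwise division. The remedy is to exploit the freedom in the choice of the window granted by Theorem \ref{basic-sequence}. Fix any non-negative $\chi_0 \in \mathcal{D}(U)\setminus\{0\}$ and, for each $x_0 \in \overline{I_{\Lambda^\perp}}$, set $\chi_{x_0} := M_{x_0}\chi_0 \in \mathcal{D}(U)$. Since $|\chi_{x_0}| = \chi_0$ independently of $x_0$, the family $\{\chi_{x_0}\}_{x_0 \in \overline{I_{\Lambda^\perp}}}$ is bounded in $\mathcal{D}(U)$ and is uniformly bounded below on any non-empty open subset of $\{\chi_0 > 0\}$; Lemma \ref{bound-bel} then yields a constant $C$ with $\|c\|_{(\mathcal{F}E)^0_{d,\chi_{x_0}}(\Lambda)} \leq C \|c\|_{(\mathcal{F}E)^0_d(\Lambda)}$ uniformly in $x_0$. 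Applying \eqref{planeq1} with $\chi_{x_0}$ in place of $\chi$ gives $\|f \cdot \mathcal{F}^{-1}(\chi_{x_0})\|_E \leq C \|c\|_{(\mathcal{F}E)^0_d(\Lambda)}$. On the other hand $\mathcal{F}^{-1}(\chi_{x_0})(x_0) = \int_{\R^n} \chi_0(y)\dy > 0$, so by continuity there is an open neighbourhood $V_{x_0}$ of $x_0$ and a $\delta_{x_0} > 0$ with $|\mathcal{F}^{-1}(\chi_{x_0})| \geq \delta_{x_0}$ on $V_{x_0}$. Solidity applied to the pointwise bound $|f \cdot 1_{V_{x_0}}| \leq \delta_{x_0}^{-1}|f \cdot \mathcal{F}^{-1}(\chi_{x_0})|$ produces $f \cdot 1_{V_{x_0}} \in E$ with norm controlled by $C\delta_{x_0}^{-1} \|c\|_{(\mathcal{F}E)^0_d(\Lambda)}$. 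A finite subcover $V_{x_1}, \ldots, V_{x_k}$ of the compact set $\overline{I_{\Lambda^\perp}}$ followed by summation of these bounds yields $f \cdot 1_{I_{\Lambda^\perp}} \in E$ with the required control. Injectivity of the mapping in \eqref{fourier-series-repr} is the classical uniqueness of Fourier series, so \eqref{fourier-series-repr} is a topological isomorphism.
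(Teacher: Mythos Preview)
Your argument is correct in spirit and differs from the paper's in the backward direction. There is one small slip: with the paper's conventions $\mathcal{F}^{-1}(M_{x_0}\chi_0)(x)=\mathcal{F}^{-1}(\chi_0)(x+x_0)$, so $\mathcal{F}^{-1}(\chi_{x_0})(x_0)=\mathcal{F}^{-1}(\chi_0)(2x_0)$, not $\int\chi_0$. The remedy is trivial---take $\chi_{x_0}:=M_{-x_0}\chi_0$ instead, which gives $\mathcal{F}^{-1}(\chi_{x_0})(x_0)=\mathcal{F}^{-1}(\chi_0)(0)=\int\chi_0>0$; nothing else in your argument changes, since $|\chi_{x_0}|=\chi_0$ still holds.

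The forward direction is essentially the paper's. For the backward direction the paper proceeds more directly: rather than sticking with compactly supported windows and patching via compactness, it invokes Proposition~\ref{lemma-synthesis}, which shows that $S_\varphi:E^B_d(\Lambda)\to E$ is continuous for \emph{any} $\varphi\in\mathcal{S}(\R^n)$. One then simply picks $\varphi$ with $\mathcal{F}^{-1}\varphi$ nowhere vanishing (a Gaussian, say), obtains $\mathcal{F}^{-1}(\varphi)f\in E$ with controlled norm, and divides by $\mathcal{F}^{-1}(\varphi)$ on the compact set $I_{\Lambda^\perp}$ in a single step by solidity. Your route avoids Proposition~\ref{lemma-synthesis} entirely, relying only on Lemma~\ref{bound-bel} (the uniform-in-window estimate) together with a finite-cover argument; this is more self-contained but longer. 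Both approaches ultimately exploit solidity to pass from $f\cdot g$ to $f\cdot 1_{I_{\Lambda^\perp}}$---the paper does it globally with one good $g$, you do it locally with a family.
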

\begin{proof} Let $f \in E(\R^n/ \Lambda^\perp)$ be arbitrary. Note that $(c_\lambda(f))_{\lambda \in \Lambda} \in \ell^\infty(\Lambda) \subset \mathcal{S}'_d(\Lambda)$. Hence,
$$
f(\xi) = \sum_{\lambda \in \Lambda} c_\lambda(f) e^{2\pi i \lambda \cdot \xi}\,\, \mbox{ in $\mathcal{S}'(\R^n)$}.
$$
Let $\chi \in \mathcal{D}(U) \backslash \{0\}$. By Lemma \ref{sequence-mother}$(ii)$, we infer that
\begin{align*}
\mathcal{F}^{-1}(S_\chi((c_\lambda(f))_{\lambda \in \Lambda}))(\xi) &= \sum_{\lambda \in \Lambda}   c_\lambda(f) \mathcal{F}^{-1}(T_\lambda \chi) (\xi)= \mathcal{F}^{-1}(\chi) (\xi)\sum_{\lambda \in \Lambda} c_\lambda(f) e^{2\pi i \lambda \cdot \xi}\\
& = \mathcal{F}^{-1}(\chi)(\xi) f(\xi) = \sum_{\mu \in \Lambda^\perp} \mathcal{F}^{-1}(\chi)(\xi)f(\xi) 1_{\mu + I_{\Lambda^\perp}}(\xi).
\end{align*}
For each $\mu \in \Lambda^\perp$ it holds that
\begin{align*}
\|\mathcal{F}^{-1}(\chi)f 1_{\mu + I_{\Lambda^\perp}} \|_E &\leq \omega_E(\mu) \| T_{-\mu}(\mathcal{F}^{-1}(\chi)) f 1_{I_{\Lambda^\perp}}\|_E \\
&\leq C_0 (1+|\mu|)^{\tau_0} \| T_{-\mu}\mathcal{F}^{-1}(\chi)\|_{L^{\infty}(I_{\Lambda^\perp})} \| f \|_{E(\R^n/ \Lambda^\perp)} \\
&\leq C(1+|\mu|)^{-n-1} \| f \|_{E(\R^n/ \Lambda^\perp)},
\end{align*}
whence the mapping \eqref{fourier-series-repr} is well-defined and continuous. This mapping is injective because for all  $f \in  L^1(\R^n/ \Lambda^\perp)$ it holds that $f = 0$ if and only if $c_\lambda(f) = 0$ for all $\lambda \in \Lambda$. Next, let $c \in  (\mathcal{F}E)^0_d(\Lambda)$ be arbitrary. By Proposition \ref{embedding-sequences}, $c \in \mathcal{S}'_d(\Lambda)$. Hence,
$
f(\xi) = \sum_{\lambda \in \Lambda} c_\lambda e^{2\pi i \lambda \cdot \xi}
$
is a well-defined $\Lambda^\perp$-periodic element of $\mathcal{S}'(\R^n)$. Choose $\varphi \in \SSS(\R^n)$ such that $\mathcal{F}^{-1}(\varphi)(\xi) \neq 0$ for all $\xi \in \R^n$. By Lemma \ref{sequence-mother}$(ii)$ and Proposition \ref{lemma-synthesis}, we have that
$$
E\ni\mathcal{F}^{-1}(S_{\varphi}(c)) = \sum_{\lambda \in \Lambda}   c_{\lambda} \mathcal{F}^{-1}(T_{\lambda} \varphi) = \mathcal{F}^{-1}(\varphi)f,
$$
which implies that $f \in E(\R^n/ \Lambda^\perp)$ and, in view of Proposition \ref{lemma-synthesis},
$$
\| f \|_{E(\R^n/ \Lambda^\perp)}  = \| f 1_{I_{\Lambda^\perp}} \|_E \leq \|1/\mathcal{F}^{-1}(\varphi)\|_{L^{\infty}(I_{\Lambda^\perp})} \|\mathcal{F}^{-1}(\varphi) f \|_E \leq C \| c\|_{\mathcal{F}E^0_d(\Lambda)}.
$$
Clearly, $c$ is equal to the image of $f$ under the mapping \eqref{fourier-series-repr}. Therefore, this mapping is surjective and its inverse is continuous.
\end{proof}

\begin{corollary}\label{cor-for-equ-wight-notwei-four-spa}
Let $1 \leq p \leq \infty$. Then, $(\mathcal{F}L^p_w)^0_d(\Lambda) = (\mathcal{F}L^p)^0_d(\Lambda)$ for all polynomially bounded weight functions $w$ on $\R^n$.
\end{corollary}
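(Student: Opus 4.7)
The plan is to deduce this directly from the Fourier-coefficient representation of Proposition \ref{Fourier-coeff-seq} together with the elementary observation that a polynomially bounded weight is pinched between two positive constants on any bounded set.

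Concretely, I would first apply Proposition \ref{Fourier-coeff-seq} to the two solid TMIB (or DTMIB) spaces $E = L^p_w$ and $E = L^p$. This yields two topological isomorphisms
\[
L^p_w(\R^n/\Lambda^\perp) \xrightarrow{\sim} (\mathcal{F}L^p_w)^0_d(\Lambda), \qquad L^p(\R^n/\Lambda^\perp) \xrightarrow{\sim} (\mathcal{F}L^p)^0_d(\Lambda),
\]
both given by the same map $f \mapsto (c_\lambda(f))_{\lambda \in \Lambda}$. Hence the corollary reduces to showing that the two periodic function spaces coincide as sets with equivalent norms, i.e.\ $L^p_w(\R^n/\Lambda^\perp) = L^p(\R^n/\Lambda^\perp)$.

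For this, I would invoke the defining inequality of a polynomially bounded weight, $w(x+y)\leq C w(x)(1+|y|)^\tau$. Taking $x = 0$ gives an upper bound $w(y) \leq C w(0)(1+|y|)^{\tau}$, while substituting $x\mapsto y$, $y\mapsto -y$ gives $w(0) \leq Cw(y)(1+|y|)^{\tau}$ and hence the lower bound $w(y) \geq w(0)/(C(1+|y|)^{\tau})$. Since $I_{\Lambda^\perp} = A_{\Lambda^\perp}[0,1)^n$ is bounded, there exist constants $0 < m \leq M < \infty$ with $m \leq w(x) \leq M$ for a.e.\ $x \in I_{\Lambda^\perp}$. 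Consequently
\[
m \|f 1_{I_{\Lambda^\perp}}\|_{L^p} \leq \|f w 1_{I_{\Lambda^\perp}}\|_{L^p} \leq M \|f 1_{I_{\Lambda^\perp}}\|_{L^p}
\]
for every $\Lambda^\perp$-periodic measurable $f$, which is precisely the norm equivalence $\|\cdot\|_{L^p_w(\R^n/\Lambda^\perp)} \asymp \|\cdot\|_{L^p(\R^n/\Lambda^\perp)}$. Combined with the previous paragraph, this yields the identity of discrete spaces with equivalent norms.

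There is no real obstacle here: once one realises that Proposition \ref{Fourier-coeff-seq} reduces the matter to a comparison of weighted and unweighted $L^p$-norms over the compact fundamental domain $I_{\Lambda^\perp}$, the polynomial bound on $w$ collapses to a two-sided uniform bound on that domain. The only mild point to be careful about is the case $p = \infty$, where the same sandwich inequality works with essential suprema, so no separate argument is needed.
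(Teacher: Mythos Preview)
Your proposal is correct and follows exactly the intended route: the paper states the corollary without proof, as an immediate consequence of Proposition~\ref{Fourier-coeff-seq}, and you have simply spelled out that deduction together with the obvious fact that a polynomially bounded weight is bounded above and below on the bounded fundamental domain $I_{\Lambda^\perp}$. Nothing further is needed.
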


\end{examples}
\subsection{Convergence properties} In this subsection we address the following question: \emph{Let $\chi \in \mathcal{D}(U)\backslash \{0\}$ and $c \in E^B_d(\Lambda)$. In which sense does the series $\sum_{\lambda \in \Lambda} c_\lambda T^B_\lambda \chi$ converge in $E$?} When $E$ is solid, we can give a quick answer to this question (cf.\ \cite[Proposition 5.2]{F-G1}).

\begin{lemma} Let $E$ be solid and let $\chi\in \DD(U)\backslash\{0\}$. For each $c\in E^B_d(\Lambda)$ the series $\sum_{\lambda\in\Lambda} c_{\lambda}T^B_{\lambda}\chi$ converges unconditionally in $E$ if $E$ is a TMIB and converges unconditionally with respect to the weak-$\ast$ topology on $E$ if $E$ is a DTMIB.
\end{lemma}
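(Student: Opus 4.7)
The key pointwise identity is $|T^B_\lambda \chi| = T_\lambda|\chi|$, a consequence of $|M_{-B\lambda}|\equiv 1$. Combined with solidity of $E$, this reduces every estimate on partial sums to controlling the dominating function $H := \sum_{\lambda\in\Lambda}|c_\lambda|T_\lambda|\chi|$. By Example \ref{examples-sequences}(i) we have $E^B_d(\Lambda) = E_d(\Lambda)$, and invoking Remark \ref{rem-ind-ope} I would take the window $W$ defining $E_d(\Lambda)$ to satisfy $\operatorname{supp}\chi \subset W \Subset U$. Pairwise disjointness of $\{\lambda + U\}_{\lambda\in\Lambda}$ then gives
\begin{equation*}
H \;\leq\; \|\chi\|_{\infty}\sum_{\lambda\in\Lambda}|c_\lambda|\, 1_{\lambda+W} \;\in\; E,
\end{equation*}
so $H \in E$ with $\|H\|_E \leq \|\chi\|_{\infty}\|c\|_{E^B_d(\Lambda)}$. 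Moreover, for every $R>0$ the set $F_R := \{\lambda \in \Lambda : (\lambda+\operatorname{supp}\chi)\cap \{|x|\leq R\} \neq \emptyset\}$ is finite, and solidity gives $\bigl\|\sum_{\lambda\in F} c_\lambda T^B_\lambda\chi\bigr\|_E \leq \|H\cdot 1_{\{|x|>R\}}\|_E$ for every finite $F \subset \Lambda\setminus F_R$.

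For the TMIB case, it therefore suffices to establish the absolute continuity of the norm, namely $\|H\cdot 1_{\{|x|>R\}}\|_E \to 0$ as $R\to\infty$; the Cauchy criterion for unordered sums in the complete space $E$ then yields unconditional convergence. I would deduce absolute continuity from the density of $\SSS(\R^n)$ in $E$. Given $\varepsilon>0$, pick $\varphi\in\SSS(\R^n)$ with $\|H-\varphi\|_E < \varepsilon$ and a smooth cutoff $\eta_R \in \DD(\R^n)$ equal to $1$ on $\{|x|\leq R\}$. Solidity and the triangle inequality give
\begin{equation*}
\|H\cdot 1_{\{|x|>R\}}\|_E \;\leq\; \|H-\varphi\|_E + \|\varphi(1-\eta_R)\|_E,
\end{equation*}
and, since the Schwartz seminorms $\|\varphi(1-\eta_R)\|_{\SSS^N}$ tend to $0$ as $R\to\infty$ for every $N$, the continuous inclusion $\SSS(\R^n)\hookrightarrow E$ makes the second term vanish.

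For the DTMIB case, write $E = E_0'$ with $E_0$ a TMIB. The partial sums $S_F := \sum_{\lambda\in F}c_\lambda T^B_\lambda\chi$ are uniformly $E$-bounded by $\|H\|_E$. Unconditional weak-$\ast$ convergence amounts to unconditional convergence of the scalar series $\sum_\lambda c_\lambda \langle T^B_\lambda\chi, h\rangle$ for each $h \in E_0$, which for scalar series is equivalent to absolute convergence. I would deduce the latter from a Hölder-type estimate
\begin{equation*}
\sum_{\lambda\in F}|c_\lambda|\,|\langle T^B_\lambda\chi,h\rangle| \;\leq\; \langle H, |h|\rangle \;\leq\; \|H\|_E\,\|h\|_{E_0}
\end{equation*}
obtained from the solid Köthe duality between $E$ and $E_0$. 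The principal obstacle sits exactly here: the estimate uses an integral representation of the $(E,E_0)$-pairing, which in the abstract DTMIB framework requires checking that $E_0$ is itself a solid Banach function space. This is automatic in all concrete examples (see Example \ref{examples-TMIB}(i) and the $L^p$-duality there) but needs a separate verification in full generality.
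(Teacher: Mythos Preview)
Your TMIB argument is correct and close in spirit to the paper's. The paper is a bit more streamlined: it works directly with $S_\chi(c)$ rather than introducing the dominating function $H$, multiplying by the characteristic function $g_{\Lambda'}$ of $\operatorname{supp}\psi\cup\bigcup_{\lambda\in\Lambda'}(\lambda+U)$ and using solidity in the form $\|(1-g_{\Lambda'})(S_\chi(c)-\psi)\|_E\le\|S_\chi(c)-\psi\|_E$. Your route through absolute continuity of the norm on $H$ is equivalent.

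In the DTMIB case the gap you flag is real: solidity of $E=E_0'$ is the hypothesis, and nothing forces $E_0$ to be solid, so the K\"othe-duality estimate $\langle H,|h|\rangle\le\|H\|_E\|h\|_{E_0}$ is unjustified. But you do not need it. You already have the uniform bound $\|S_\chi(c)-S_F\|_E\le\|S_\chi(c)\|_E$ for every finite $F\subset\Lambda$ (from $|S_\chi(c)-S_F|\le|S_\chi(c)|$ and solidity of $E$). Now approximate on the \emph{predual} side: given $h\in E_0$ and $\varepsilon>0$, pick $\psi\in\mathcal{D}(\R^n)$ with $\|h-\psi\|_{E_0}<\varepsilon$ (density of $\mathcal{D}$ in the TMIB $E_0$). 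For any finite $F$ containing every $\lambda$ with $(\lambda+U)\cap\operatorname{supp}\psi\neq\emptyset$, the support of $S_\chi(c)-S_F$ misses $\operatorname{supp}\psi$, so
\[
|\langle S_\chi(c)-S_F,\,h\rangle|
=|\langle S_\chi(c)-S_F,\,h-\psi\rangle|
\le \|S_\chi(c)\|_E\,\|h-\psi\|_{E_0}<\|S_\chi(c)\|_E\,\varepsilon.
\]
This is precisely the paper's ``similar'' treatment of the DTMIB case, and it uses only solidity of $E$.
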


\begin{proof} We only consider the case when $E$ is a TMIB as the case when $E$ is a DTMIB can be treated similarly. Let $\varepsilon>0$ be arbitrary. Pick $\psi\in\DD(\R^n)$ such that $\|S_\chi(c)-\psi\|_E\leq\varepsilon$. Let $\Lambda^{(0)}$ be a finite subset of $\Lambda$ such that $\operatorname{supp}\psi\cap (\bigcup_{\lambda\in\Lambda\backslash\Lambda^{(0)}}(\lambda+U))=\emptyset$. For any $\Lambda^{(0)}\subseteq\Lambda'\subset \Lambda$, $\Lambda'$ finite, denote by $g_{\Lambda'}$ the characteristic function of the set $\operatorname{supp}\psi\cup(\bigcup_{\lambda\in\Lambda'}(\lambda+U))$. Since $\sum_{\lambda\in\Lambda'}c_{\lambda} T^B_{\lambda}\chi=g_{\Lambda'}S_\chi(c)$ and $(1-g_{\Lambda'})\psi=0$, we infer that
$$
\left\| S_\chi(c)-\sum_{\lambda\in\Lambda'}c_{\lambda} T^B_{\lambda}\chi\right\|_E=\|(1-g_{\Lambda'})(S_\chi(c)-\psi)\|_E\leq \|S_\chi(c)-\psi\|_E\leq \varepsilon.
$$
\end{proof}

However, for general TMIB and DTMIB this question is far more subtle, as the following observation shows.

\begin{proposition} \label{negative} Let $\chi \in \mathcal{D}((-\frac{1}{2}, \frac{1}{2})) \backslash \{0\}$. For $1\leq p < \infty$, $p \neq 2$, there exists an element $c \in (\mathcal{F}L^p)^0_d(\Z)$ such that the series $\sum_{\lambda \in \Z} c_\lambda T_\lambda \chi$ is not unconditionally convergent  in  $\mathcal{F}L^p(\R)$. For $p = 1$ there even exists an element  $c \in( \mathcal{F}L^1)^0_d(\Z)$ such that the sequence of symmetric partial sums $\left(\sum_{|\lambda| \leq N} c_\lambda T_\lambda \chi \right)_{N \in \N}$ does not converge in $\mathcal{F}L^1(\R)$.
\end{proposition}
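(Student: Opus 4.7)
\emph{Plan.} The strategy is to reduce both statements to classical facts about Fourier series on the torus through the identification of Proposition~\ref{Fourier-coeff-seq} (applied with $B=0$, $\Lambda=\Z$, $E=L^p$), which yields a topological isomorphism $L^p(\R/\Z)\to(\mathcal{F}L^p)^0_d(\Z)$, $g\mapsto (c_\lambda(g))_{\lambda\in\Z}$. As in its proof, for every finite $F\subset\Z$,
$$
\mathcal{F}^{-1}\!\left(\sum_{\lambda\in F} c_\lambda(g)\, T_\lambda \chi\right) \;=\; \mathcal{F}^{-1}(\chi)\cdot \sum_{\lambda\in F} c_\lambda(g)\, e^{2\pi i \lambda \xi} \;=\; \mathcal{F}^{-1}(\chi)\cdot S_F g,
$$
where $S_F g$ denotes the partial sum of the Fourier series of $g$ indexed by $F$. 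The first key ingredient will be a transfer lemma: there exist $A,B>0$ such that
$$
A\,\|h\|_{L^p(\R/\Z)}\;\leq\;\|\mathcal{F}^{-1}(\chi)\cdot h\|_{L^p(\R)}\;\leq\;B\,\|h\|_{L^p(\R/\Z)}
$$
for every $h\in L^p(\R/\Z)$. Writing $\|\mathcal{F}^{-1}(\chi)\cdot h\|_{L^p(\R)}^p=\int_0^1|h(\xi)|^p P_p(\xi)\,\dxi$ with $P_p(\xi):=\sum_{k\in\Z}|\mathcal{F}^{-1}(\chi)(\xi+k)|^p$, the upper bound is immediate because $P_p\in L^\infty(\R/\Z)$, and the lower bound reduces to showing $\inf_\xi P_p(\xi)>0$. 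Since $\chi\in\DD((-1/2,1/2))$, the Paley--Wiener theorem identifies $\mathcal{F}^{-1}(\chi)$ with the restriction to $\R$ of an entire function of exponential type strictly less than $\pi$; if $P_p(\xi_0)=0$, then $\mathcal{F}^{-1}(\chi)$ vanishes on the whole arithmetic progression $\xi_0+\Z$, and Carlson's theorem then forces $\mathcal{F}^{-1}(\chi)\equiv 0$, contradicting $\chi\neq 0$. Hence $P_p$ is continuous, 1-periodic and strictly positive on $\R$, so bounded below.

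Granting the transfer lemma, unconditional (respectively, symmetric) $\mathcal{F}L^p(\R)$-convergence of $\sum_\lambda c_\lambda(g)T_\lambda\chi$ is equivalent to the corresponding $L^p(\R/\Z)$-convergence of the Fourier series of $g$. For $1\leq p<2$ I apply Orlicz's theorem: unconditional convergence in $\mathcal{F}L^p(\R)\cong L^p(\R)$ forces $\sum|c_\lambda|^2\,\|T_\lambda\chi\|_{\mathcal{F}L^p}^2<\infty$, and since $\|T_\lambda\chi\|_{\mathcal{F}L^p}=\|\mathcal{F}^{-1}(\chi)\|_{L^p}$ is a positive constant independent of $\lambda$, this yields $(c_\lambda)\in\ell^2$, equivalently $g\in L^2(\R/\Z)$; any $g\in L^p(\R/\Z)\setminus L^2(\R/\Z)$ (for instance $g(\xi)=|\xi-1/2|^{-1/2}$ on $(0,1)$, extended periodically) therefore furnishes the desired counterexample. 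For $p>2$ I invoke duality and reflexivity: if $\{e^{2\pi i n\xi}\}_{n\in\Z}$ were an unconditional Schauder basis of $L^p(\R/\Z)$, its biorthogonal system $\{e^{-2\pi i n\xi}\}_{n\in\Z}$ would be an unconditional basis of $L^{p'}(\R/\Z)$ with $1<p'<2$, contradicting the previous case; hence some $g\in L^p(\R/\Z)$ has Fourier series failing to converge unconditionally in $L^p(\R/\Z)$, and the transfer lemma translates this into the desired failure in $\mathcal{F}L^p(\R)$. Finally, for $p=1$, the Banach--Steinhaus principle applied to the convolution operators $S_N:L^1(\R/\Z)\to L^1(\R/\Z)$, whose norms equal the Lebesgue constants $\|D_N\|_{L^1(\R/\Z)}\sim\log N$ and hence are unbounded, produces some $g\in L^1(\R/\Z)$ with $\sup_N\|S_N g\|_{L^1(\R/\Z)}=\infty$; any $\mathcal{F}L^1$-convergence of the sequence $\sum_{|\lambda|\leq N} c_\lambda(g)T_\lambda\chi$ would be Cauchy, and by the transfer lemma $(\|S_N g\|_{L^1(\R/\Z)})_N$ would also be Cauchy, hence bounded --- a contradiction.

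The principal technical obstacle is the lower bound in the transfer lemma, i.e.\ the positivity of $P_p$ on $[0,1]$; the Paley--Wiener/Carlson dichotomy is the cleanest route here and substitutes for the elementary periodization estimate available in the solid case. All remaining ingredients --- Orlicz's theorem, the Banach--Steinhaus principle, the logarithmic growth of the Lebesgue constants, and the biorthogonality/reflexivity argument for unconditional bases --- are entirely classical.
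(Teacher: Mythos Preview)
Your approach is essentially the same as the paper's: reduce to classical facts about Fourier series on the torus via Proposition~\ref{Fourier-coeff-seq}. The paper simply cites that proposition and two textbook references (Heil for the unconditional failure when $p\neq 2$; Grafakos for the symmetric-partial-sum failure when $p=1$), whereas you supply proofs of those classical facts and add a separate ``transfer lemma.''

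One point worth noting: your transfer lemma is \emph{exactly} the statement that the map in Proposition~\ref{Fourier-coeff-seq} is a topological isomorphism, since the norm on $(\mathcal{F}L^p)^0_{d,\chi}(\Z)$ of $(c_\lambda(h))_\lambda$ is by definition $\|\mathcal{F}^{-1}(\chi)\cdot h\|_{L^p(\R)}$. So the Paley--Wiener/Carlson argument is unnecessary --- the proposition you invoke at the outset already delivers the two-sided estimate with constants independent of $h$ (and in particular of the finite index set $F$, by applying it to the trigonometric polynomial $S_F g$). The remainder of your argument (Orlicz for $p<2$, duality of unconditional bases for $p>2$, Banach--Steinhaus and Lebesgue constants for $p=1$) is correct and constitutes a self-contained proof of the classical facts the paper merely cites.
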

\begin{proof}
In view of Proposition \ref{Fourier-coeff-seq}, this is a consequence of the following two classical facts about Fourier series: For $1\leq p < \infty$, $p \neq 2$, there exists an element in $L^p(\R/\Z)$ whose Fourier series is not unconditionally convergent in $L^p(\R/\Z)$ \cite[Exercise 6.5]{Heil}; there exists an element in $L^1(\R/\Z)$ such that the sequence of symmetric partial sums of its Fourier series does not converge  in $L^1(\R/\Z)$ \cite[Example 4.1.4]{Grafakos}.
\end{proof}
We will now formulate a positive answer to the above question by using the concept of C\'esaro summability.
\begin{definition}\label{def-cesaro} Let $X$ be a Hausdorff topological vector space and let $(x_\lambda)_{\lambda \in \Lambda} \subset X$.  The series $\sum_{\lambda \in \Lambda} x_\lambda$ is said to be \emph{C\'esaro summable to $x \in X$} if  (recall that $\Lambda = A_\Lambda \Z^n$)
$$
\lim_{N \to \infty} \sum_{\substack{m \in \Z^n \\ |m_j| < N}} \left ( 1 - \frac{|m_1|}{N} \right) \cdots \left ( 1 - \frac{|m_n|}{N} \right) x_{A_\Lambda m} = x.
$$
\end{definition}

\begin{theorem} \label{cesaro} Let $\chi \in \mathcal{D}(U) \backslash \{0\}$. For each $c \in E^B_d(\Lambda)$, the series $\sum_{\lambda \in \Lambda} c_\lambda T^B_\lambda \chi$ is C\'esaro summable in $E$ if $E$ is a TMIB and C\'esaro summable with respect to the weak-$\ast$ topology on $E$ if $E$ is a DTMIB.
\end{theorem}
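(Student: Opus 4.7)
The plan is to recognise the Ces\`aro partial sums
$\sigma_N := \sum_{|m_j|<N}\prod_j(1-|m_j|/N)\,c_{A_\Lambda m}\,T^B_{A_\Lambda m}\chi$
as Fej\'er means of an auxiliary $E$-valued (weak-$\ast$ $E$-valued for DTMIB) periodic function on $\R^n/\Lambda^\perp$, and then invoke the Banach-space-valued Fej\'er theorem. The function I propose is
$$\widetilde g(\mu):=M_\mu S_{M_{-\mu}\chi}(c),\qquad \mu\in\R^n,$$
which belongs to $E$ by Proposition \ref{lemma-synthesis} and evidently satisfies $\widetilde g(0)=S_\chi(c)$. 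The first task is to establish that $\widetilde g$ is $\Lambda^\perp$-periodic and continuous $\R^n\to E$ (respectively weak-$\ast$ continuous for DTMIB). Periodicity reduces, via density and the continuous bilinearity of $S$, to the pointwise identity $M_\nu T^B_\lambda M_{-\nu}\varphi=T^B_\lambda\varphi$ for $\lambda\in\Lambda$, $\nu\in\Lambda^\perp$, $\varphi\in\SSS(\R^n)$, while continuity follows by splitting the increment as $M_{\mu'}\bigl(S_{M_{-\mu'}\chi}(c)-S_{M_{-\mu}\chi}(c)\bigr)+(M_{\mu'}-M_\mu)S_{M_{-\mu}\chi}(c)$ and controlling each summand via Proposition \ref{lemma-synthesis} and the continuity of the modulation orbits.

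The crux is then to identify the Fourier coefficients of $\widetilde g$ viewed as a periodic $E$-valued function: I will prove that
$$\operatorname{vol}(\Lambda)\int_{I_{\Lambda^\perp}}\widetilde g(\mu)\,e^{-2\pi i\lambda\cdot\mu}\,\dmu = c_\lambda T^B_\lambda\chi \quad\text{in }E,\qquad\lambda\in\Lambda,$$
interpreting the integral as Bochner (TMIB) or weak-$\ast$ Pettis (DTMIB). For $c\in\SSS_d(\Lambda)$ the series $\widetilde g(\mu)=\sum_{\lambda'}c_{\lambda'}e^{2\pi i\lambda'\cdot\mu}T^B_{\lambda'}\chi$ converges absolutely in $\SSS(\R^n)$, and orthogonality yields the identity at once. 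For a general $c$ in the TMIB case I extend by norm density of $\SSS_d(\Lambda)$ in $E^B_d(\Lambda)$: approximating $S_\chi(c)\in E$ by $\varphi_n\in\SSS$ (possible since $\SSS\hookrightarrow E$ densely for TMIB) and applying the continuous left-inverse $R_{\check\psi}$ from Corollary \ref{complemented}, one obtains $R_{\check\psi}(\varphi_n)\in\SSS_d(\Lambda)$ (by Lemma \ref{twisted-test}$(iv)$) converging to $(\theta_B(\psi),\overline{\chi})_{L^2}\,c$ in $E^B_d(\Lambda)$. For DTMIB one cannot argue this way; instead I would test the identity against each $\phi\in E_0$, reducing to the scalar Fourier coefficient of the continuous function $\mu\mapsto\langle\widetilde g(\mu),\phi\rangle$, and invoke \eqref{change-for-element-dual} to rewrite the resulting functional of $c$ as $c\mapsto\langle S_\chi(c),h\rangle$, where agreement on $\SSS_d(\Lambda)$ and continuity of the relevant linear functional on $E^B_d(\Lambda)$ suffice.

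With the Fourier coefficient identity secured, a short computation using $\widehat{F_N}(m)=\prod_j(1-|m_j|/N)_+$ for the Fej\'er kernel $F_N$ on $\R^n/\Z^n$ and the change of variables $\mu=A_\Lambda^{-t}y$ identifies the $N$-th Fej\'er mean of $\widetilde g$ at the origin with $\sigma_N$. The Banach-space-valued Fej\'er theorem -- whose standard proof via positivity, unit mass, and concentration of $F_N$ transfers verbatim to Banach-valued continuous integrands -- then yields $\sigma_N\to\widetilde g(0)=S_\chi(c)$ in $E$ for TMIB. For DTMIB the analogous conclusion with respect to the weak-$\ast$ topology follows by applying the classical scalar Fej\'er theorem to $\mu\mapsto\langle\widetilde g(\mu),\phi\rangle$ for every $\phi\in E_0$.

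The main obstacle I anticipate is the Fourier coefficient identification in the DTMIB setting, because norm density of $\SSS_d(\Lambda)$ in $E^B_d(\Lambda)$ fails in general (as witnessed e.g.\ by $E=L^\infty$), so the reduction through \eqref{change-for-element-dual} and the TMIB predual $E_0$ is genuinely needed; everything else is essentially bookkeeping on top of Proposition \ref{lemma-synthesis} and Corollary \ref{complemented}.
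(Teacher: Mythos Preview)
Your overall strategy is sound and, at its core, is the same idea as the paper's: both arguments recognise the Ces\`aro partial sums as Fej\'er means of the very same $E$-valued $\Lambda^\perp$-periodic function, namely the locally finite sum $\sum_{\lambda}c_\lambda e^{2\pi i\lambda\cdot\mu}T^B_\lambda\chi$. You construct this function as $\widetilde g(\mu)=M_\mu S_{M_{-\mu}\chi}(c)$ via Proposition~\ref{lemma-synthesis}; the paper instead writes it as $S_\chi(c)\cdot\sum_\lambda e^{2\pi i\lambda\cdot\mu}T_\lambda\psi$ (with $\psi\in\mathcal D(U)$, $\psi=1$ on $\operatorname{supp}\chi$) and then uses the integral representation \eqref{int-22} for the $E\times\mathcal F\mathcal M^1_{\widetilde\nu_E}$ product to express $\sigma_N$ \emph{directly} as $\int_{\R^n}M_{-\xi}S_\chi(c)\,\widehat\psi(-\xi)F_N(A_\Lambda^t\xi)\,d\xi$, bypassing any separate Fourier-coefficient computation. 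The paper's route is shorter; yours is more classically Fourier-analytic but equally valid.

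There is, however, a genuine gap in your DTMIB argument for the Fourier-coefficient identity. You propose to rewrite both sides, tested against $\phi\in E_0$, as $c\mapsto\langle S_\chi(c),h\rangle$ via \eqref{change-for-element-dual}, and then conclude from ``agreement on $\SSS_d(\Lambda)$ and continuity''. But two norm-continuous functionals on $E^B_d(\Lambda)$ that agree on $\SSS_d(\Lambda)$ need not agree everywhere, precisely because for a DTMIB $E$ the space $\SSS_d(\Lambda)$ is generally \emph{not} norm-dense in $E^B_d(\Lambda)$ (your own $E=L^\infty$ example). Moreover, you have not justified that the $h_\mu$ coming from \eqref{change-for-element-dual} depends on $\mu$ in a Bochner-integrable way. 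The fix is much simpler than the route you sketch: test against $\phi\in\mathcal D(\R^n)$. Since the $E$--$E_0$ pairing restricts to the distributional pairing on $\mathcal S$, and since $S_{M_{-\mu}\chi}(c)$ is, as a distribution, the \emph{locally finite} sum $\sum_\lambda c_\lambda T^B_\lambda(M_{-\mu}\chi)$, one gets for compactly supported $\phi$ the finite sum $\langle\widetilde g(\mu),\phi\rangle=\sum_\lambda c_\lambda e^{2\pi i\lambda\cdot\mu}\langle T^B_\lambda\chi,\phi\rangle$, whose $\lambda$-th Fourier coefficient is $c_\lambda\langle T^B_\lambda\chi,\phi\rangle$ by elementary orthogonality. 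Density of $\mathcal D(\R^n)$ in $E_0$ (TMIB predual) then gives the identity for all $\phi\in E_0$. The same computation handles the TMIB case without needing the density of $\SSS_d(\Lambda)$ at all (though your argument for that density is correct and non-circular).
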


We need some preparation for the proof of Theorem \ref{cesaro}. A sequence $(k_N)_{N \in \Z_+} \subset L^1(\R^n / \Z^n)$ is called an \emph{approximate identity on $\R^n / \Z^n$} \cite[Definition 1.2.15]{Grafakos} if
\begin{itemize}
\item[$(i)$] $ \int_{[-\frac{1}{2},\frac{1}{2}]^n} k_N(x) \dx = 1$ for all $N \in\Z_+$.
\item[$(ii)$] $ \sup_{N \in \Z_+} \int_{[-\frac{1}{2},\frac{1}{2}]^n} |k_N(x)| \dx < \infty$.
\item[$(iii)$] For all $\delta > 0$ it holds that
$$
\lim_{N \to \infty}  \int_{[-\frac{1}{2},\frac{1}{2}]^n \backslash [-\delta,\delta]^n} |k_N(x)| \dx = 0.
$$
\end{itemize}
Set
$$
F_N(x) = \sum_{\substack{m \in \Z^n \\ |m_j| < N}} \left ( 1 - \frac{|m_1|}{N} \right) \cdots \left ( 1 - \frac{|m_n|}{N} \right) e^{2\pi i m \cdot x}, \qquad N \in \Z_+.
$$
Then, $(F_N)_{N \in \Z_+}$, called the \emph{F\'ejer kernel}, is an approximate identity on $\R^n / \Z^n$ \cite[Proposition 3.1.10]{Grafakos}. We need the following vector-valued version of the fundamental property of approximate identities on $\R^n / \Z^n$; the proof is analogous to the scalar-valued case (see e.g.\ the proof of \cite[Theorem 1.2.19]{Grafakos}) and we omit it.

\begin{lemma} \label{approx}  Let $(k_N)_{N \in \Z_+}$  be an approximate identity on $\R^n / \Z^n$.
\begin{itemize}
\item[$(i)$] Let $X$ be a Banach space. Suppose that ${\bf{f}}: [-\frac{1}{2},\frac{1}{2}]^n \rightarrow X$ is continuous. Then,
\begin{equation}
\lim_{N \to \infty}  \int_{[-\frac{1}{2},\frac{1}{2}]^n} {\bf{f}}(x)k_N(x) \dx = {\bf{f}}(0),
\label{approx-2}
\end{equation}
where the above integrals should be interpreted as $X$-valued Bochner integrals.
\item[$(ii)$] Let $X_0$ be a separable Banach space and set $X = X_0'$. Suppose that ${\bf{f}}: [-\frac{1}{2},\frac{1}{2}]^n \rightarrow X$ is continuous with respect to the weak-$\ast$ topology on $X$. Then, \eqref{approx-2} holds with respect to the weak-$\ast$ topology on $X$ if the integrals are interpreted as $X$-valued Pettis integrals with respect to the weak-$\ast$ topology on $X$.
\end{itemize}
\end{lemma}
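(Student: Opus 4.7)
The plan is to mirror the classical scalar-valued argument, which the authors themselves acknowledge as the template. For part $(i)$, the key observation is that since $\mathbf{f}:[-\tfrac12,\tfrac12]^n \to X$ is continuous on a compact set, it is uniformly continuous and bounded: set $M:=\sup_{x}\|\mathbf{f}(x)\|_X<\infty$, and for $\varepsilon>0$ pick $\delta>0$ so that $\|\mathbf{f}(x)-\mathbf{f}(0)\|_X<\varepsilon$ whenever $|x|\leq \delta$. Using property $(i)$ of the approximate identity ($\int k_N=1$), one writes
$$
\int_{[-\frac12,\frac12]^n} \mathbf{f}(x)k_N(x)\dx - \mathbf{f}(0) = \int_{[-\frac12,\frac12]^n}\bigl(\mathbf{f}(x)-\mathbf{f}(0)\bigr) k_N(x)\dx,
$$
and splits the right-hand side along $|x|\leq \delta$ versus $|x|>\delta$ (within $[-\tfrac12,\tfrac12]^n$). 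The first piece is bounded in norm by $\varepsilon \sup_N \int |k_N|$ which is finite by property $(ii)$; the second piece is bounded by $2M \int_{[-\frac12,\frac12]^n\setminus[-\delta,\delta]^n}|k_N|\dx$, which tends to $0$ by property $(iii)$. Letting $N\to\infty$ then $\varepsilon\to 0$ gives the claim. The Bochner integrability of $\mathbf{f} k_N$ is immediate from continuity of $\mathbf{f}$ and $k_N\in L^1$.

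For part $(ii)$, I would reduce to the scalar case. First I need to verify that the Pettis integrals on the left-hand side of \eqref{approx-2} actually exist. Since $\mathbf{f}$ is weak-$\ast$ continuous on the compact set $[-\tfrac12,\tfrac12]^n$, for every $g\in X_0$ the scalar function $x\mapsto \langle \mathbf{f}(x),g\rangle$ is continuous, hence bounded. By the uniform boundedness principle applied in $X=X_0'$, $\mathbf{f}$ is norm-bounded on $[-\tfrac12,\tfrac12]^n$, so $\int \|\mathbf{f}(x)\|_X|k_N(x)|\dx<\infty$; Lemma \ref{weak-Bochner} then provides the Pettis integral. Now for each fixed $g\in X_0$,
$$
\left\langle \int_{[-\frac12,\frac12]^n}\mathbf{f}(x)k_N(x)\dx, g\right\rangle = \int_{[-\frac12,\frac12]^n}\langle \mathbf{f}(x),g\rangle k_N(x)\dx,
$$
and applying the scalar version of \eqref{approx-2} to the continuous bounded function $\langle \mathbf{f}(\cdot),g\rangle$ yields convergence to $\langle \mathbf{f}(0),g\rangle$. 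Since this holds for every $g\in X_0$, the convergence holds in the weak-$\ast$ topology.

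I do not anticipate a serious obstacle: the only mildly delicate point is the justification of norm-boundedness of $\mathbf{f}$ in case $(ii)$ from weak-$\ast$ continuity, which is handled by the Banach--Steinhaus argument above (viewing $\mathbf{f}(x)$ as a pointwise-bounded family of functionals on $X_0$). Everything else reduces routinely either to the scalar identity-kernel estimate or to the scalar case applied coordinate-wise through $X_0$.
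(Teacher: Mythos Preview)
Your argument is correct and is exactly the adaptation of the scalar approximate-identity proof that the paper has in mind (the paper omits the proof, pointing to \cite[Theorem 1.2.19]{Grafakos}). One cosmetic point: in part $(i)$ your split uses the ball $\{|x|\le\delta\}$ while property $(iii)$ of the approximate identity is stated for cube complements $[-\tfrac12,\tfrac12]^n\setminus[-\delta,\delta]^n$, so you should simply choose $\delta$ so that $\|\mathbf{f}(x)-\mathbf{f}(0)\|_X<\varepsilon$ for $x\in[-\delta,\delta]^n$ and split along the cube instead.
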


\begin{proof}[Proof of Theorem \ref{cesaro}]
Choose $\psi \in \mathcal{D}(U)$ such that $\psi = 1$ on $\operatorname{supp} \chi$.  Let $c \in E^B_d(\Lambda)$ be arbitrary. For each $N \in \Z_+$, it holds that
\begin{align*}
&\sum_{|m_j| <N} \left ( 1 - \frac{|m_1|}{N} \right) \cdots \left ( 1 - \frac{|m_n|}{N} \right) c_{A_\Lambda m}  T^B_{A_\Lambda m} \chi \\
&= \sum_{|m_j| <N} \left ( 1 - \frac{|m_1|}{N} \right) \cdots \left ( 1 - \frac{|m_n|}{N} \right) c_{A_\Lambda m}  T^B_{A_\Lambda m} \chi T_ {A_\Lambda m} \psi \\
&= S_\chi(c) \cdot   \sum_{|m_j| <N} \left ( 1 - \frac{|m_1|}{N} \right) \cdots \left ( 1 - \frac{|m_n|}{N} \right) T_ {A_\Lambda m} \psi.
\end{align*}
Note that
\begin{align*}
\mathcal{F}^{-1} \left( \sum_{|m_j| <N}  \left( 1 - \frac{|m_1|}{N} \right) \cdots \left ( 1 - \frac{|m_n|}{N} \right) T_ {A_\Lambda m} \psi\right) (\xi)
 = \widehat{\psi}(-\xi) F_N(A^t_\Lambda \xi).
\end{align*}
Hence, \eqref{int-22} yields that
$$
\sum_{|m_j| <N} \left ( 1 - \frac{|m_1|}{N} \right) \cdots \left ( 1 - \frac{|m_n|}{N} \right) c_{A_\Lambda m}  T^B_{A_\Lambda m} \chi= \int_{\R^n}  M_{-\xi} S_\chi(c)  \widehat{\psi}(-\xi)  F_N(A^t_\Lambda \xi) \dxi,
$$
where the integral should be interpreted as an $E$-valued Bochner integral if $E$ is a TMIB and as an $E$-valued Pettis integral if $E$ is a DTMIB. Therefore, Lemma \ref{approx} yields that
\begin{align*}
& \lim_{N \to \infty} \sum_{|m_j| <N} \left ( 1 - \frac{|m_1|}{N} \right) \cdots \left ( 1 - \frac{|m_n|}{N} \right) c_{A_\Lambda m}  T^B_{A_\Lambda m} \chi \\
&= \frac{1}{\operatorname{vol}(\Lambda)} \lim_{N \to \infty} \int_{\R^n}  M_{(A^t_\Lambda)^{-1}\xi} S_\chi(c)   \widehat{\psi}((A^t_\Lambda)^{-1}\xi)F_N(\xi)  \dxi \\
&= \frac{1}{\operatorname{vol}(\Lambda)} \lim_{N \to \infty} \sum_{m \in \Z^n} \int_{m + [-\frac{1}{2},\frac{1}{2}]^n } M_{(A^t_\Lambda)^{-1}\xi} S_\chi(c)  \widehat{\psi}((A^t_\Lambda)^{-1}\xi)  F_N(\xi)  \dxi \\
&= \frac{1}{\operatorname{vol}(\Lambda)} \sum_{m \in \Z^n} \lim_{N \to \infty}  \int_{[-\frac{1}{2},\frac{1}{2}]^n }   M_{(A^t_\Lambda)^{-1}(m+\xi)} S_\chi(c)  \widehat{\psi}((A^t_\Lambda)^{-1}(m + \xi)) F_N(\xi) \dxi \\
&= \frac{1}{\operatorname{vol}(\Lambda)} \sum_{m \in \Z^n} M_{(A^t_\Lambda)^{-1}m} S_\chi(c) \widehat{\psi}((A^t_\Lambda)^{-1}m)\\
&= \frac{1}{\operatorname{vol}(\Lambda)} \sum_{\mu \in \Lambda^\perp} M_{\mu} S_\chi(c)\widehat{\psi}(\mu) \\
&= S_\chi(c) \cdot \sum_{\lambda \in \Lambda} T_\lambda \psi\\
&=  S_\chi(c),
\end{align*}
where the last equality follows from the fact that $\psi = 1$ on $\operatorname{supp} \chi$.
\end{proof}
\begin{remark}
Let $\chi \in \mathcal{D}(U) \backslash \{0\}$ and let $c \in E^B_d(\Lambda)$. Instead of the C\'esaro means, we can also consider the \emph{Bochner-Riesz means} of order $\alpha$, $\alpha \geq 0$, of the series $S_\chi(c) = \sum_{\lambda \in \Lambda} c_\lambda T^B_\lambda\chi$, namely,
$$
B^\alpha_N(S_\chi(c)) = \sum_{\substack{m \in \Z^n \\ |m| \leq N }}  \left ( 1 - \frac{|m|^2}{N^2} \right)^\alpha c_{A_{\Lambda} m} T^B_{A_\Lambda m} \chi, \qquad N \in \Z_+.
$$
Set
$$
L^\alpha_N(x) = \sum_{\substack{m \in \Z^n \\ |m| \leq N }}  \left ( 1 - \frac{|m|^2}{N^2} \right)^\alpha e^{2\pi i m \cdot x}, \qquad N \in \Z_+.
$$
Then, $(L^\alpha_N)_{N \in \Z_+}$ is an approximate identity on $\R^n / \Z^n$ if $\alpha > (n-1)/2$ \cite[Proof of Proposition 4.1.9]{Grafakos}. Hence, by using the exact same argument as in the proof Theorem \ref{cesaro}, one can show that, for $\alpha > (n-1)/2$,
$$
\lim_{N \to \infty}B^\alpha_N(S_\chi(c)) = S_\chi(c)
$$
in $E$ if $E$ is a TMIB and with respect to the weak-$\ast$ topology on $E$ if $E$ is a DTMIB.
\end{remark}
We denote by $c_{00}(\Lambda)$ the space consisting of all elements of $\C^\Lambda$ with only finitely many non-zero entries.
We  have the following consequence of Theorem \ref{cesaro}.
\begin{corollary} \label{cesaro-cor} Let $E$ be a TMIB. The space $c_{00}(\Lambda)$ is dense in  $E^B_d(\Lambda)$.
\end{corollary}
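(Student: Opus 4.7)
The plan is to use Theorem \ref{cesaro} directly. Fix any $\chi \in \mathcal{D}(U) \setminus \{0\}$, so that the norm on $E^B_d(\Lambda)$ is $\|c\|_{E^B_d(\Lambda)} = \|S_\chi(c)\|_E$. Given $c \in E^B_d(\Lambda)$, I would define the Cesàro truncations
\[
c^{(N)}_{\lambda} := \begin{cases} \left(1 - \frac{|m_1|}{N}\right)\cdots\left(1 - \frac{|m_n|}{N}\right) c_\lambda & \text{if } \lambda = A_\Lambda m \text{ with } |m_j| < N, \\ 0 & \text{otherwise.} \end{cases}
\]
Each $c^{(N)}$ lies in $c_{00}(\Lambda)$ by construction.

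The remaining step is to show $c^{(N)} \to c$ in $E^B_d(\Lambda)$. Since $c^{(N)} \in c_{00}(\Lambda) \subset \mathcal{S}_d(\Lambda)$, Lemma \ref{sequence-mother}$(i)$ (or simply the finite-sum definition) ensures $S_\chi(c^{(N)}) = \sum_\lambda c^{(N)}_\lambda T^B_\lambda \chi$ is a finite linear combination in $E$, and thus
\[
S_\chi(c^{(N)}) = \sum_{|m_j| < N} \left(1 - \frac{|m_1|}{N}\right)\cdots\left(1 - \frac{|m_n|}{N}\right) c_{A_\Lambda m}\, T^B_{A_\Lambda m} \chi,
\]
which is exactly the $N$-th Cesàro partial sum of the series $\sum_\lambda c_\lambda T^B_\lambda \chi$ in the sense of Definition \ref{def-cesaro}. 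By Theorem \ref{cesaro}, since $E$ is a TMIB, this sequence converges to $S_\chi(c)$ in $E$. Therefore
\[
\|c - c^{(N)}\|_{E^B_d(\Lambda)} = \|S_\chi(c - c^{(N)})\|_E = \|S_\chi(c) - S_\chi(c^{(N)})\|_E \longrightarrow 0,
\]
where linearity of $S_\chi$ on $E^B_d(\Lambda)$ follows from the convergence of both defining series in $E$.

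There is no real obstacle here beyond recognising that the Cesàro means provide finitely-supported approximants which converge in the $E^B_d(\Lambda)$-norm precisely because that norm is measured through $S_\chi$ in $E$, and Theorem \ref{cesaro} supplies convergence in $E$ for TMIB (the DTMIB case only gives weak-$\ast$ convergence, which is why the hypothesis that $E$ be a TMIB is needed).
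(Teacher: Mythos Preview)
Your proof is correct and is exactly the argument the paper has in mind: the corollary is stated without proof as an immediate consequence of Theorem \ref{cesaro}, and your Ces\`aro truncations $c^{(N)}$ are precisely the finitely supported approximants whose images under $S_\chi$ are the Ces\`aro means converging in $E$. Indeed, the paper uses the very same sequences $c^{(N)}$ explicitly in the proof of Corollary \ref{equ-for-bilinear-mapping-sum} just below.
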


With the help of Theorem \ref{cesaro}, we can also describe the bilinear mapping \eqref{bil-synthesis-mappiing} from Proposition \ref{lemma-synthesis}.

\begin{corollary}\label{equ-for-bilinear-mapping-sum}
For all $c\in E^B_d(\Lambda)$ and $\varphi\in W(\mathcal{F}L^1_{\widetilde{\nu}_E},L^1_{\omega_E})$, it holds that
\begin{equation}\label{the-bilinear-mapping-syn-fom}
S_{\varphi}(c)=\sum_{\lambda\in\Lambda}c_{\lambda}T^B_{\lambda}\varphi
\end{equation}
where the series is C\'esaro summable in $E$ if $E$ is a TMIB and C\'esaro summable with respect to the weak-$\ast$ topology on $E$ if $E$ is a DTMIB.
\end{corollary}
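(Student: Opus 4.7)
The plan is to reduce the statement to the case already settled by Theorem~\ref{cesaro} (where $\varphi \in \mathcal{D}(U)$) by combining the partition-of-unity decomposition from the proof of Proposition~\ref{lemma-synthesis} with the F\'ejer-kernel/approximate-identity argument that was the heart of the proof of Theorem~\ref{cesaro}. I fix data exactly as in that proof: $r>0$ with $[-4r,4r]^n\subseteq U$, $\chi\in\mathcal{D}_{[-r,r]^n}$ with $\sum_{k\in\Z^n}T_{rk}\chi=1$, and auxiliary $\psi\in\mathcal{D}_{[-2r,2r]^n}$, $\psi_1\in\mathcal{D}_{[-3r,3r]^n}$ with $\psi=1$ on $[-r,r]^n$ and $\psi_1=1$ on $[-2r,2r]^n$. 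Writing $\varphi=\sum_{k}\varphi T_{rk}\chi$ (absolutely in $W$, hence in $E$ by Corollary~\ref{inclusion-amalgam-E}), and denoting the Ces\`aro partial sum by $P_N(\varphi)$, I would exchange the finite outer Ces\`aro sum with the $k$-sum to get $P_N(\varphi)=\sum_k P_N(\varphi T_{rk}\chi)$.

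Next, for each fixed $k$, I would carry out the same chain of identities as in \eqref{periodic-formula1}, but with the Ces\`aro weights $w_N(m)$ attached throughout. The crucial step is the ``disjoint-support'' trick from the proof of Theorem~\ref{cesaro}: the supports of $T^B_{A_\Lambda m}\chi$ are pairwise disjoint for $m\in\Z^n$ (because $\operatorname{supp}\chi\subseteq U$), and this allows one to factor the summation as
\[P_N(\varphi T_{rk}\chi)=T_{rk}\bigl[S_\chi(c)\cdot G^{(N)}_k\bigr],\qquad G^{(N)}_k:=\sum_{|m_j|<N}w_N(m)\,e^{-2\pi i BA_\Lambda m\cdot rk}T_{A_\Lambda m}\bigl((\psi T_{-rk}\varphi)\psi_1\bigr).\]
Letting $N\to\infty$, I would then adapt the computation of Theorem~\ref{cesaro}: rewrite $S_\chi(c)\cdot G^{(N)}_k$ via \eqref{int-22} as an integral against $F_N(A^t_\Lambda(\cdot-B^t rk))$, change variables and periodise over $\Lambda^\perp$, and apply Lemma~\ref{approx} (F\'ejer-kernel approximate identity) to obtain convergence to the $E$-element $T_{rk}[S_\chi(c)\cdot G_k]$, where $G_k$ is the full (non-truncated) sum, well defined in $\mathcal{F}\mathcal{M}^1_{\widetilde{\nu}_E}$ by Lemma~\ref{mea-spa-sub}. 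In view of the identity \eqref{periodic-formula1}--\eqref{the-bilinear-synthesis}, summing the pointwise-in-$k$ limits produces exactly $S_\varphi(c)$.

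The main obstacle is upgrading the pointwise-in-$k$ convergence to convergence of $\sum_k P_N(\varphi T_{rk}\chi)$ in $E$: I need a uniform-in-$N$ dominating estimate $\|T_{rk}[S_\chi(c)\cdot G^{(N)}_k]\|_E\leq C a_k$ with $\sum_k a_k<\infty$. Naively bounding $\|G^{(N)}_k\|_{\mathcal{F}L^1_{\widetilde{\nu}_E}}$ is problematic because the F\'ejer kernel is not bounded on $\R^n$; the fix is a Ces\`aro analogue of Lemma~\ref{mea-spa-sub}, which exploits the smoothness of the Schwartz-class factor $\psi\psi_1\in\mathcal{D}$ (its Fourier transform decays rapidly) to convert the pointwise discrete sum over $\Lambda^\perp$ into an integral estimate that survives intact after insertion of the $F_N$-weight. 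Coupled with the estimate $\sum_k\omega_E(rk)\|(\psi T_{-rk}\varphi)\psi_1\|_{\mathcal{F}L^1_{\widetilde{\nu}_E}}\leq C\|\varphi\|_W$ already contained in \eqref{est-for-partt1}, this produces the required summable dominant, and dominated convergence in $k$ yields $P_N(\varphi)\to S_\varphi(c)$ in $E$ (respectively, in the weak-$\ast$ topology when $E$ is a DTMIB, where the integrals inside the argument are interpreted as Pettis integrals via Lemma~\ref{weak-Bochner}).
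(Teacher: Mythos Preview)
Your strategy can likely be pushed through, but it reinvents a great deal of machinery that is already available. The paper's proof is far shorter because it exploits a simple structural observation you have overlooked: the $N$-th C\'esaro partial sum of the series $\sum_{\lambda}c_{\lambda}T^B_{\lambda}\varphi$ is nothing other than $S_{\varphi}(c^{(N)})$, where $c^{(N)}\in c_{00}(\Lambda)$ is the sequence with entries $c^{(N)}_{A_\Lambda m}=\bigl(1-\tfrac{|m_1|}{N}\bigr)\cdots\bigl(1-\tfrac{|m_n|}{N}\bigr)c_{A_\Lambda m}$ for $|m_j|<N$ and zero otherwise. (For $c'\in c_{00}(\Lambda)$ one has $S_\varphi(c')=\sum_\lambda c'_\lambda T^B_\lambda\varphi$ by the density argument at the start of Proposition~\ref{lemma-synthesis}.) With this in hand, the TMIB case is immediate: Theorem~\ref{cesaro} says precisely that $S_\chi(c^{(N)})\to S_\chi(c)$ in $E$, i.e.\ $c^{(N)}\to c$ in $E^B_d(\Lambda)$, and the continuity of $c\mapsto S_\varphi(c)$ established in Proposition~\ref{lemma-synthesis} then gives $S_\varphi(c^{(N)})\to S_\varphi(c)$. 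For the DTMIB case one invokes the duality identity \eqref{change-for-element-dual}: for each $g\in E_0$ there is $h\in E_0$ with $\langle S_\varphi(c^{(N)}),g\rangle=\langle S_\chi(c^{(N)}),h\rangle$, and the right-hand side converges to $\langle S_\chi(c),h\rangle=\langle S_\varphi(c),g\rangle$ by Theorem~\ref{cesaro}.

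By contrast, your route decomposes $\varphi$ via the partition of unity, re-runs the F\'ejer-kernel computation of Theorem~\ref{cesaro} for each piece $\varphi T_{rk}\chi$, and then has to justify the interchange of the $k$-sum with the limit $N\to\infty$ by proving a new uniform-in-$N$ estimate (your ``C\'esaro analogue of Lemma~\ref{mea-spa-sub}''). This last step is where the real work lies in your plan, and while the rapid decay of $\widehat{\psi\psi_1}$ does make such an estimate plausible (periodise over $\Lambda^\perp$ and use that $\int F_N=1$ on a fundamental domain), it is a nontrivial lemma that the paper never needs. The paper's trick buys you the result essentially for free once Proposition~\ref{lemma-synthesis} and Theorem~\ref{cesaro} are in place; yours would give a second, independent proof at the cost of an additional technical estimate.
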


\begin{proof} Since $\SSS(\R^n)$ is dense in $W(\mathcal{F}L^1_{\widetilde{\nu}_E},L^1_{\omega_E})$, Proposition \ref{lemma-synthesis} yields that \eqref{the-bilinear-mapping-syn-fom} holds true for all $c\in c_{00}(\Lambda)$ and $\varphi\in W(\mathcal{F}L^1_{\widetilde{\nu}_E},L^1_{\omega_E})$. Let $c\in E^B_d(\Lambda)$ be arbitrary. We define $c^{(N)}\in c_{00}(\Lambda)$, $N \in \Z_+$, by
$$
c^{(N)}_{A_{\Lambda}m}=
\left\{
\begin{array}{l}
\left(1-\frac{|m_1|}{N}\right)\cdots \left(1-\frac{|m_n|}{N}\right)c_{A_{\Lambda}m},\quad \mbox{if}\,\, |m_j|<N,\\
0,\quad \mbox{otherwise.}
\end{array}
\right.
$$
If $E$ is a TMIB, Proposition \ref{lemma-synthesis} and Theorem \ref{cesaro} imply that $S_{\varphi}(c^{(N)})$ converges to $S_{\varphi}(c)$ in $E$, which completes the proof in this case. Now suppose that $E$ is a DTMIB with $E=E'_0$, where $E_0$ is a TMIB. Let $\chi\in \DD(U)\backslash\{0\}$ be as in the second part of  Proposition \ref{lemma-synthesis}. Then, for every $g\in E_0$ there is $h\in E_0$ satisfying
\begin{align*}
&\left\langle \sum_{|m_j|<N} \left(1-\frac{|m_1|}{N}\right)\cdots \left(1-\frac{|m_n|}{N}\right)c_{A_{\Lambda}m}T^B_{A_{\Lambda}m}\varphi,g\right\rangle\\
&=\left\langle \sum_{|m_j|<N} \left(1-\frac{|m_1|}{N}\right)\cdots \left(1-\frac{|m_n|}{N}\right)c_{A_{\Lambda}m}T^B_{A_{\Lambda}m}\chi,h\right\rangle.
\end{align*}
Theorem \ref{cesaro} yields that the right-hand side of the above identity tends to $\langle S_{\chi}(c),h\rangle = \langle S_{\varphi}(c),g\rangle$. This completes the proof.
\end{proof}

\subsection{Duality and stability under completed tensor products of TMIB}\label{tensor} In this subsection, we determine the dual of the discrete space associated to a TMIB and study the discrete space associated to the completed tensor product of  two TMIB.
\begin{proposition}\label{dual-seq}
Let $E$ be a TMIB. The strong dual of $E^B_d(\Lambda)$ may be topologically identified with $(E')^{-B}_d(\Lambda)$ via the dual pairing
$$
\langle c', c \rangle = \sum_{\lambda \in \Lambda} c'_\lambda c_\lambda, \qquad c' \in (E')^{-B}_d(\Lambda),\, c \in E^B_d(\Lambda).
$$
Furthermore, the series $\sum_{\lambda \in \Lambda} c'_\lambda c_\lambda$ is C\'esaro summable in $\C$.
\end{proposition}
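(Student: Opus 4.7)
The plan is to realise the map $\Phi\colon c'\mapsto \langle c',\cdot\rangle$ as an isomorphism $(E')^{-B}_d(\Lambda)\to (E^B_d(\Lambda))'$ by combining the synthesis/sampling machinery of Corollary \ref{complemented} with the C\'esaro summability of Theorem \ref{cesaro}. The starting observation is an orthogonality relation: since $\{\lambda+U\}_{\lambda\in\Lambda}$ is pairwise disjoint, one can choose $\chi,\chi'\in\DD(U)\backslash\{0\}$ with $\int_{\R^n}\chi(t)\chi'(t)\dt=1$, and then
\[
\int_{\R^n} T^B_\lambda\chi(t)\,T^{-B}_\mu\chi'(t)\dt=\delta_{\lambda\mu},\qquad \lambda,\mu\in\Lambda,
\]
because for $\lambda\neq\mu$ the two support translates are disjoint, while for $\lambda=\mu$ the phase factors $e^{\mp 2\pi i B\lambda\cdot(t-\lambda)}$ cancel.

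To show that $\Phi$ is well-defined and continuous, fix $c\in E^B_d(\Lambda)$ and $c'\in (E')^{-B}_d(\Lambda)$, so that $S_\chi(c)\in E$ and $S_{\chi'}(c')\in E'$. By Theorem \ref{cesaro}, the C\'esaro partial sums $\sigma_N(c):=\sum_{|m_j|<N}\prod_j(1-|m_j|/N)\,c_{A_\Lambda m}\,T^B_{A_\Lambda m}\chi$ converge to $S_\chi(c)$ in $E$, whence pairing with $S_{\chi'}(c')\in E'$ gives $\langle \sigma_N(c),S_{\chi'}(c')\rangle\to\langle S_\chi(c),S_{\chi'}(c')\rangle$. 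Combining the orthogonality above with the $\SSS'$-convergent representation $S_{\chi'}(c')=\sum_\mu c'_\mu T^{-B}_\mu\chi'$ provided by Lemma \ref{sequence-mother}$(ii)$, I obtain $\langle T^B_\lambda\chi,S_{\chi'}(c')\rangle=c'_\lambda$, and hence the C\'esaro sum $\sum_\lambda c'_\lambda c_\lambda$ exists with value $\langle S_\chi(c),S_{\chi'}(c')\rangle$. The bound $|\Phi(c')(c)|\leq \|c\|_{E^B_{d,\chi}(\Lambda)}\|c'\|_{(E')^{-B}_{d,\chi'}(\Lambda)}$ is then immediate from the definition of the norms, and injectivity of $\Phi$ follows by testing against the standard basis vectors $e_\lambda\in c_{00}(\Lambda)\subset \mathcal{S}_{d}(\Lambda)\subset E^B_d(\Lambda)$ (Proposition \ref{embedding-sequences}), for which $\Phi(c')(e_\lambda)=c'_\lambda$.

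For surjectivity I exploit the complemented subspace structure from Corollary \ref{complemented}: pick $\psi\in\DD(U)$ with $\alpha:=(\theta_B(\psi),\overline{\chi})_{L^2}\neq 0$, so that $\alpha^{-1}R_{\check\psi}\colon E\to E^B_d(\Lambda)$ is a continuous left inverse of $S_\chi$. Given $L\in (E^B_d(\Lambda))'$, the functional $\widetilde L:=\alpha^{-1}L\circ R_{\check\psi}$ belongs to $E'$ and satisfies $L(c)=\widetilde L(S_\chi(c))=\langle S_\chi(c),\widetilde L\rangle$ for all $c$. Applying Proposition \ref{sampling} to the DTMIB $E'$ with matrix $-B$ and window $\theta_B\check\chi\in\DD(\R^n)$, set $c':=R_{\theta_B\check\chi}(\widetilde L)\in(E')^{-B}_d(\Lambda)$; using the identity $\theta_{-B}((\theta_B\check\chi)^\vee)=\theta_{-B}\theta_B\chi=\chi$, a direct unfolding of the definition of $\ast_{-B}$ gives $c'_\lambda=\langle \widetilde L, T^B_\lambda\chi\rangle$. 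The C\'esaro computation from the previous paragraph, applied now to $\widetilde L\in E'$ in place of $S_{\chi'}(c')$, yields $L(c)=\Phi(c')(c)$, so $\Phi$ is surjective, and the topological identification follows from the open mapping theorem. The main technical obstacle is that unconditional convergence is genuinely unavailable (Proposition \ref{negative}), so the pairing has to be handled in the C\'esaro sense throughout; this is precisely what forces Theorem \ref{cesaro} to be used at the critical passage to the limit.
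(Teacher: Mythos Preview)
Your proof is correct and follows essentially the same strategy as the paper's: both establish the biorthogonality $\int T^B_\lambda\chi\,T^{-B}_\mu\chi' = \delta_{\lambda\mu}$, invoke Theorem \ref{cesaro} to pass from $c_{00}(\Lambda)$ to $E^B_d(\Lambda)$ via C\'esaro convergence, and use the sampling operator for surjectivity. The only difference is organisational: for surjectivity the paper works directly with the transpose ${}^tR_\psi\colon (E^B_d(\Lambda))'\to E'$ and checks by an explicit pairing that ${}^tR_\psi(x')=\sum_\lambda c'_\lambda T^{-B}_\lambda\chi$, whereas you first extend $L$ to $\widetilde L\in E'$ via the projection of Corollary \ref{complemented} and then sample $\widetilde L$ via Proposition \ref{sampling} applied to the DTMIB $E'$; both routes produce the same $c'$ and are equally valid.
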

\begin{proof}
Let $\chi, \psi \in \mathcal{D}(U) \backslash \{0\}$ be such that $\int_{\R^d} \chi(x)\psi(x) dx = 1$. Note that
$$
 \sum_{\lambda \in \Lambda} c'_\lambda c_\lambda = \left\langle \sum_{\lambda \in \Lambda} c'_\lambda T^{-B}_\lambda \psi,  \sum_{\lambda \in \Lambda} c_\lambda T^{B}_\lambda \chi \right\rangle, \qquad  c' \in (E')^{-B}_d(\Lambda),\,  c \in c_{00}(\Lambda).
$$
Hence, Theorem \ref{cesaro} implies that the mapping
$$
(E')^{-B}_d(\Lambda) \rightarrow (E^B_d(\Lambda))'_b, \, c' \mapsto \left( c \mapsto  \sum_{\lambda \in \Lambda} c'_\lambda c_\lambda \right),
$$
is well-defined and continuous, and that the series $\sum_{\lambda \in \Lambda} c'_\lambda c_\lambda$ is C\'esaro summable in $\C$. This mapping is clearly injective. We now show that it is also surjective; the result then follows from the open mapping theorem. Pick $\chi  \in \mathcal{D}(U) \backslash \{0\}$ such that $\check{\chi}\in \DD(U)$ and set $\psi = \theta_{-B}(\check{\chi}) \in \mathcal{D}(U) \backslash \{0\}$. Let $x' \in  (E^B_d(\Lambda))'$ be arbitrary. There is $c' \in \C^\Lambda$ such that
$$
\langle x', c \rangle =  \sum_{\lambda \in \Lambda} c'_\lambda c_\lambda, \qquad c \in c_{00}(\Lambda).
$$
Since the space $c_{00}(\Lambda)$ is dense in $E^B_d(\Lambda)$ (Corollary \ref{cesaro-cor}), it suffices to show that $c' \in (E')^{-B}_d(\Lambda)$. Consider the continuous linear mapping $R_{\psi} : E \rightarrow E^{B}_d(\Lambda)$ from Proposition \ref{sampling} and denote its transpose by ${}^tR_{\psi}$. For all $\varphi \in \mathcal{D}(\R^n)$ it holds that
\begin{align*}
\langle {}^tR_{\psi}(x'), \varphi \rangle  &= \sum_{\lambda \in \Lambda} c'_\lambda \varphi \ast_B \psi(\lambda) \\
&=  \sum_{\lambda \in \Lambda} c'_\lambda \int_{\R^n} \varphi(x) T^{-B}_{\lambda}\chi(x) \dx \\
&= \int_{\R^n} \left ( \sum_{\lambda \in \Lambda} c'_\lambda T^{-B}_\lambda \chi \right)(x) \varphi(x) dx.
\end{align*}
Hence, $ \sum_{\lambda \in \Lambda} c'_\lambda T^{-B}_\lambda \chi =  {}^tR_{\psi}(x') \in E'$ and, thus, $c' \in (E')^{-B}_d(\Lambda)$.
\end{proof}

Our next goal is to show that the completed tensor product of the discrete spaces associated to two TMIB is canonically isomorphic to the discrete space associated to the completed tensor product of the two TMIB.

\begin{proposition}\label{thmforTenspr}
Let $E_j$ be a TMIB on $\R^{n_j}$, let $B_j$ be a real-valued $n_j\times n_j$-matrix, and let $\Lambda_j$ be a lattice in $\R^{n_j}$ for $j = 1,2$. Let $\tau$ denote either $\pi$ or $\epsilon$. Then, $(E_1)^{B_1}_d(\Lambda_1) \widehat{\otimes}_{\tau} (E_2)^{B_2}_d(\Lambda_2)$ is canonically isomorphic to $(E_1 \widehat{\otimes}_{\tau} E_2)^{B_1 \oplus B_2}_d(\Lambda_1 \times \Lambda_2)$.
\end{proposition}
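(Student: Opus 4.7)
The plan is to realise the desired identification via mutually inverse continuous linear maps built from the synthesis and sampling operators of Corollary \ref{complemented}, exploiting the fact that all the relevant constructions tensor-split.

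Pick $\chi_j, \psi_j \in \mathcal{D}(U_j) \setminus \{0\}$ with $c_j := (\theta_{B_j}(\psi_j), \overline{\chi}_j)_{L^2} \neq 0$ for $j = 1, 2$, where $U_j \subset \R^{n_j}$ is a bounded open neighbourhood of the origin with pairwise disjoint $\Lambda_j$-translates. Then $U_1 \times U_2$ is admissible for $\Lambda_1 \times \Lambda_2$, and since $\theta_{B_1 \oplus B_2}(\psi_1 \otimes \psi_2) = \theta_{B_1}(\psi_1) \otimes \theta_{B_2}(\psi_2)$, the tensors $\chi_1 \otimes \chi_2, \psi_1 \otimes \psi_2 \in \mathcal{D}(U_1 \times U_2) \setminus \{0\}$ satisfy $(\theta_{B_1 \oplus B_2}(\psi_1 \otimes \psi_2), \overline{\chi_1 \otimes \chi_2})_{L^2} = c_1 c_2 \neq 0$. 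Write $F_j := (E_j)^{B_j}_d(\Lambda_j)$ and $F_{12} := (E_1 \widehat{\otimes}_\tau E_2)^{B_1 \oplus B_2}_d(\Lambda_1 \times \Lambda_2)$, and let $S_j, R_j, S_{12}, R_{12}$ denote the corresponding synthesis and sampling operators; Corollary \ref{complemented} then gives $R_j S_j = c_j \operatorname{id}_{F_j}$ and $R_{12} S_{12} = c_1 c_2 \operatorname{id}_{F_{12}}$.

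The heart of the argument is the verification of the following two tensor-compatibility identities:
\begin{align*}
S_{12}\bigl((c^{(1)}_{\lambda_1} c^{(2)}_{\lambda_2})_{(\lambda_1,\lambda_2)}\bigr) &= S_1(c^{(1)}) \otimes S_2(c^{(2)}), \qquad c^{(j)} \in F_j, \\
R_{12}(e_1 \otimes e_2) &= \bigl(R_1(e_1)_{\lambda_1} R_2(e_2)_{\lambda_2}\bigr)_{(\lambda_1,\lambda_2)}, \qquad e_j \in E_j.
\end{align*}
The first follows from the factorisation $T^{B_1 \oplus B_2}_{(\lambda_1, \lambda_2)}(\chi_1 \otimes \chi_2) = T^{B_1}_{\lambda_1}\chi_1 \otimes T^{B_2}_{\lambda_2}\chi_2$ by testing against pure tensors $\varphi_1 \otimes \varphi_2 \in \mathcal{S}(\R^{n_1+n_2})$ and invoking Lemma \ref{sequence-mother} together with Fubini to make sense of and rearrange the absolutely convergent double series in $\mathcal{S}'(\R^{n_1+n_2})$. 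The second follows from the direct computation $(e_1 \otimes e_2) \ast_{B_1 \oplus B_2}(\check{\psi}_1 \otimes \check{\psi}_2)(x_1, x_2) = (e_1 \ast_{B_1} \check{\psi}_1)(x_1) \cdot (e_2 \ast_{B_2} \check{\psi}_2)(x_2)$, which uses the splittings of $T^{-(B_1 \oplus B_2)}$ and $\theta_{B_1 \oplus B_2}$ together with the defining formula \eqref{def-two}.

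To conclude, define
\[
\Phi := (c_1 c_2)^{-1} R_{12} \circ (S_1 \widehat{\otimes}_\tau S_2) \colon F_1 \widehat{\otimes}_\tau F_2 \to F_{12}, \quad \Psi := (c_1 c_2)^{-1} (R_1 \widehat{\otimes}_\tau R_2) \circ S_{12} \colon F_{12} \to F_1 \widehat{\otimes}_\tau F_2,
\]
both continuous by functoriality of $\widehat{\otimes}_\tau$ for $\tau \in \{\pi, \epsilon\}$. Using the second core identity one verifies that $\Phi$ acts on a pure tensor $c^{(1)} \otimes c^{(2)} \in F_1 \otimes F_2$ exactly as the reindexing $c^{(1)} \otimes c^{(2)} \mapsto (c^{(1)}_{\lambda_1} c^{(2)}_{\lambda_2})_{(\lambda_1, \lambda_2)}$; combined with the first core identity and density of $F_1 \otimes F_2$ in $F_1 \widehat{\otimes}_\tau F_2$ this yields $S_{12} \circ \Phi = S_1 \widehat{\otimes}_\tau S_2$. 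Similarly the second identity upgrades, by density of $E_1 \otimes E_2$ in $E_1 \widehat{\otimes}_\tau E_2$, to $R_{12} = \Phi \circ (R_1 \widehat{\otimes}_\tau R_2)$. Hence
\[
\Psi \circ \Phi = (c_1 c_2)^{-1}\bigl((R_1 S_1) \widehat{\otimes}_\tau (R_2 S_2)\bigr) = \operatorname{id}_{F_1 \widehat{\otimes}_\tau F_2}, \qquad \Phi \circ \Psi = (c_1 c_2)^{-1} R_{12} \circ S_{12} = \operatorname{id}_{F_{12}},
\]
so $\Phi$ is the desired topological isomorphism. The main technical obstacle is the careful proof of the two core identities, and in particular a precise distributional interpretation of the double series defining $S_{12}$, which is provided by Lemma \ref{sequence-mother} together with the continuity of the tensor product map from $\mathcal{S}(\R^{n_1}) \otimes \mathcal{S}(\R^{n_2})$ into $\mathcal{S}(\R^{n_1+n_2})$.
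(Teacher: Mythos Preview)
Your argument is correct, and it follows a genuinely different route from the paper's. The paper proceeds by showing that $S_{\chi_1}\widehat{\otimes}_\tau S_{\chi_2}$ is a topological embedding into $E_1\widehat{\otimes}_\tau E_2$ whose range is exactly $S_\chi\bigl((E_1\widehat{\otimes}_\tau E_2)^{B_1\oplus B_2}_d(\Lambda_1\times\Lambda_2)\bigr)$; this requires treating $\tau=\pi$ and $\tau=\epsilon$ separately (for $\pi$ one appeals to the complemented-subspace criterion, \cite[Proposition~2.4]{ryan}, for $\epsilon$ it is immediate from the definition), and then invokes the density of $c_{00}(\Lambda_1\times\Lambda_2)$ in $(E_1\widehat{\otimes}_\tau E_2)^{B_1\oplus B_2}_d$ furnished by Corollary~\ref{cesaro-cor} to identify the range. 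You instead build the inverse explicitly as $\Psi=(c_1c_2)^{-1}(R_1\widehat{\otimes}_\tau R_2)\circ S_{12}$ and verify $\Phi\Psi=\operatorname{id}$, $\Psi\Phi=\operatorname{id}$ by reducing to the tensor identities for $S_{12}$ and $R_{12}$ on pure tensors and extending by density of the algebraic tensor products. This has the advantage of being uniform in $\tau$ (you never need to know that $\widehat{\otimes}_\pi$ preserves embeddings of complemented subspaces) and of bypassing Corollary~\ref{cesaro-cor} altogether; the paper's approach, on the other hand, makes the structural reason for the result more transparent by identifying the image of $S_{\chi_1}\widehat{\otimes}_\tau S_{\chi_2}$ directly.
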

\begin{proof}
Set $n=n_1+n_2$, $B=B_1\oplus B_2$, and $\Lambda = \Lambda_1 \times \Lambda_2$. Choose a bounded open neighbourhood $U_j$ of the origin in $\R^{n_j}$ such that the families of sets $\{ \lambda + U_j \, | \, \lambda \in \Lambda_j \}$, $j = 1,2$, are pairwise disjoint. Set $U = U_1 \times U_2$. Choose $\chi_j \in \mathcal{D}(U_j) \backslash \{0\}$ such that $\check{\chi}_j\in\DD(U_j)$, $j = 1,2$, and set $\chi = \chi_1 \otimes \chi_2 \in \mathcal{D}(U) \backslash \{0\}$. Denote by $\iota: (E_1)^{B_1}_d(\Lambda_1) \otimes_{\tau} (E_2)^{B_2}_d(\Lambda_2) \rightarrow (E_1 \widehat{\otimes}_{\tau} E_2)^B_d(\Lambda)$ the canonical inclusion mapping. We need to show that this mapping extends to a topological isomorphism from $(E_1)^{B_1}_d(\Lambda_1) \widehat{\otimes}_{\tau} (E_2)^{B_2}_d(\Lambda_2)$ onto $(E_1 \widehat{\otimes}_{\tau} E_2)^B_d(\Lambda)$. By the identity $S_{\chi_1} \otimes S_{\chi_2} = S_\chi \circ \iota$ and Corollary \ref{complemented}, it suffices to show that the mapping
$$
S_{\chi_1} \widehat{\otimes}_{\tau} S_{\chi_2}:  (E_1)^{B_1}_d(\Lambda_1) \widehat{\otimes}_{\tau} (E_2)^{B_2}_d(\Lambda_2) \rightarrow E_1 \widehat{\otimes}_{\tau} E_2
$$
is a topological embedding with range equal to $S_\chi((E_1 \hat{\otimes}_{\tau} E_2)^B_d(\Lambda))$. The mappings $S_{\chi_j}$, $j = 1,2$, are topological embeddings. Hence, by definition of the $\epsilon$-topology, the mapping $S_{\chi_1} \widehat{\otimes}_{\epsilon} S_{\chi_2}$ is a topological embedding as well (cf.\ \cite[p.\ 47]{ryan}). For the $\pi$-topology, Corollary \ref{complemented} and \cite[Proposition 2.4]{ryan} imply that also the mapping $S_{\chi_1} \widehat{\otimes}_{\pi} S_{\chi_2}$ is a topological embedding. The identity $S_{\chi_1} \otimes S_{\chi_2} = S_\chi \circ \iota$ and the fact that $S_\chi((E_1 \widehat{\otimes}_{\tau} E_2)^B_d(\Lambda))$ is closed in $E_1 \widehat{\otimes}_{\tau} E_2$ imply that the range of $S_{\chi_1} \widehat{\otimes}_{\tau} S_{\chi_2}$ is included in
$S_\chi((E_1 \widehat{\otimes}_{\tau} E_2)^B_d(\Lambda))$. As the space $c_{00}(\Lambda) = c_{00}(\Lambda_1) \otimes c_{00}(\Lambda_2) \subset  (E_1)^{B_1}_d(\Lambda_1) \otimes  (E_2)^{B_2}_d(\Lambda_2)$ is dense in $(E_1 \widehat{\otimes}_{\tau} E_2)^B_d(\Lambda)$ (Corollary \ref{cesaro-cor}) and $S_{\chi_1} \widehat{\otimes}_{\tau} S_{\chi_2}$ is a topological embedding, we conclude that the range of $S_{\chi_1} \widehat{\otimes}_{\tau} S_{\chi_2}$ is equal to $S_\chi((E_1 \widehat{\otimes}_{\tau} E_2)^B_d(\Lambda))$.
\end{proof}


\subsection{Examples} \label{sect-examples}
As explained in the introduction, the $2n \times 2n$-matrix $B_0$ from Definition \ref{def-B0} is the most important for our purposes. In this subsection, we  determine the discrete space associated to various TMIB and DTMIB with respect to  $B_0$. We start with the spaces considered in Example \ref{examples-TMIB}$(iii)$.

Let $\Lambda$ be a lattice in $\R^n$ and let $B$ be a real-valued $n\times n$ matrix. For $c\in \C^{\Lambda}$ we set $\theta_B(c)=(c_{\lambda}e^{2\pi i B\lambda\cdot\lambda})_{\lambda\in\Lambda}$. Given a Banach space $X\subset \C^{\Lambda}$, we define the Banach space $\theta_B X:=\{c\in \C^{\Lambda}\, |\, \theta_{-B}(c)\in X\}$ with norm $\|c\|_{\theta_B X}:=\|\theta_{-B}(c)\|_X$.
\begin{proposition}\label{mixed-norm-TMIB}
{Let $E$ be a TMIB on $\R^n$ and let $w$ be a polynomially bounded weight function on $\R^n$. Let $\Lambda_1$ and $\Lambda_2$ be two lattices in $\R^n$. }
\begin{itemize}
\item[$(i)$] It holds that
\begin{align*}
& (L^p_w(\R^n_\xi;E_x))^{B_0}_d (\Lambda_{1,x} \times \Lambda_{2,\xi}) = \ell^p_w(\Lambda_{2}; E^0_d(\Lambda_{1})), \qquad 1 \leq p < \infty, \\
&(C_{0,w}(\R^n_\xi;E_x))^{B_0}_d(\Lambda_{1,x} \times \Lambda_{2,\xi})= c_{0,w}(\Lambda_2;E^0_d(\Lambda_1)),
\end{align*}
topologically.
\item[$(ii)$] Suppose that $\nu_E =1$. Then,
\begin{align*}
&(L^p_w(\R^n_x;E_\xi))^{B_0}_d (\Lambda_{1,x} \times \Lambda_{2,\xi}) = \theta_{-B_0}\ell^p_w(\Lambda_1;E^0_d(\Lambda_2)), \qquad 1 \leq p < \infty, \\
&(C_{0,w}(\R^n_x;E_\xi))^{B_0}_d(\Lambda_{1,x} \times \Lambda_{2,\xi})= \theta_{-B_0}c_{0,w}(\Lambda_1;E^0_d(\Lambda_2)),
\end{align*}
topologically.
\end{itemize}
\end{proposition}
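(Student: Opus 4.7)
The plan is to work with a product window $\chi=\chi_1\otimes\chi_2$, with $\chi_j\in\DD(U_j)\setminus\{0\}$ on a product neighbourhood $U=U_1\times U_2$ (legitimate by Theorem \ref{basic-sequence}), and exploit the fibre decomposition coming from the identity
\begin{equation*}
T^{B_0}_{(\lambda_1,\lambda_2)}(\chi_1\otimes\chi_2)(t,\eta)=T_{\lambda_1}\chi_1(t)\cdot T_{\lambda_2}(M_{-\lambda_1}\chi_2)(\eta),
\end{equation*}
which follows from Definition \ref{def-B0} via $e^{-2\pi i\lambda_1(\eta-\lambda_2)}\chi_2(\eta-\lambda_2)=T_{\lambda_2}(M_{-\lambda_1}\chi_2)(\eta)$. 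The disjointness of $\{\lambda_j+U_j\}_{\lambda_j\in\Lambda_j}$ then forces at most one nonzero contribution in each fibre of $S_\chi(c)$ from the outer lattice, reducing the problem to a fibre-wise $E$-norm estimate combined with a summation in the outer lattice.

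For part $(i)$ I would fix $\eta$ and let $\lambda_2(\eta)\in\Lambda_2$ be the (unique) lattice point with $\xi:=\eta-\lambda_2(\eta)\in U_2$. Rewriting $e^{-2\pi i\lambda_1\xi}T_{\lambda_1}\chi_1=M_{-\xi}T_{\lambda_1}(M_\xi\chi_1)$ gives
\begin{equation*}
S_\chi(c)(\cdot,\eta)=\chi_2(\xi)\,M_{-\xi}\sum_{\lambda_1\in\Lambda_1}c_{\lambda_1,\lambda_2(\eta)}\,T_{\lambda_1}(M_\xi\chi_1).
\end{equation*}
The family $\{M_\xi\chi_1:\xi\in\overline{U_2}\}$ is bounded in $\DD(U_1)$ and bounded below on $\{|\chi_1|>0\}$, so Lemma \ref{bound-bel} yields a two-sided bound for $\|\sum_{\lambda_1}c_{\lambda_1,\lambda_2(\eta)}T_{\lambda_1}(M_\xi\chi_1)\|_E$ in terms of $\|(c_{\lambda_1,\lambda_2(\eta)})_{\lambda_1}\|_{E^0_d(\Lambda_1)}$, uniformly in $\xi\in\overline{U_2}$. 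Together with the uniform estimates $\|M_{-\xi}\|_{\mathcal L(E)}\asymp 1$ and $w(\lambda_2+\xi)\asymp w(\lambda_2)$ on $\overline{U_2}$ (polynomial boundedness of $w$ and $\nu_E$), integration over $\eta\in\lambda_2+U_2$ followed by summation in $\lambda_2$ produces the desired equivalence with $\ell^p_w(\Lambda_2;E^0_d(\Lambda_1))$. The $C_{0,w}$ variant follows by replacing the integral with a supremum and invoking a routine $\varepsilon$-argument.

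For part $(ii)$ I would fix $t\in\lambda_1+U_1$ and set $y=t-\lambda_1$. The commutation $T_{\lambda_2}M_{-\lambda_1}=e^{2\pi i\lambda_1\cdot\lambda_2}M_{-\lambda_1}T_{\lambda_2}$ delivers
\begin{equation*}
S_\chi(c)(t,\cdot)=\chi_1(y)\,M_{-\lambda_1}\!\!\sum_{\lambda_2\in\Lambda_2}c_{\lambda_1,\lambda_2}\,e^{2\pi i\lambda_1\cdot\lambda_2}\,T_{\lambda_2}\chi_2.
\end{equation*}
Here the modulation $M_{-\lambda_1}$ is indexed by the \emph{unbounded} $\lambda_1\in\Lambda_1$, and this is the hard part of the proof. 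Lemma \ref{bound-bel} is no longer applicable, since $\{M_{-\lambda_1}\chi_2:\lambda_1\in\Lambda_1\}$ is unbounded in $\DD(U_2)$, and any polynomial-in-$\lambda_1$ estimate on $\|M_{-\lambda_1}\|_{\mathcal L(E)}$ would contaminate the $\ell^p_w$-summation in a way that cannot be absorbed by $w$. The hypothesis $\nu_E=1$ is precisely what trivialises this step: $M_{-\lambda_1}$ acts as an isometry on $E$, whence
\begin{equation*}
\|S_\chi(c)(t,\cdot)\|_E=|\chi_1(y)|\cdot\|(c_{\lambda_1,\lambda_2}e^{2\pi i\lambda_1\cdot\lambda_2})_{\lambda_2}\|_{E^0_d(\Lambda_2)}.
\end{equation*}
Since $B_0\lambda\cdot\lambda=\lambda_1\cdot\lambda_2$ for $\lambda=(\lambda_1,\lambda_2)$, the phase factor is exactly the $\theta_{B_0}$-twist of $c$, so integrating in $t\in\lambda_1+U_1$ and then summing in $\lambda_1$ identifies the norm with that of $\theta_{-B_0}\ell^p_w(\Lambda_1;E^0_d(\Lambda_2))$. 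The $C_{0,w}$ case is handled by the same fibre estimate as before.
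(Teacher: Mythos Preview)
Your proposal is correct and follows essentially the same route as the paper: product window $\chi=\chi_1\otimes\chi_2$, the fibre decomposition via disjointness of the $\lambda_j+U_j$, and Lemma \ref{bound-bel} applied to the bounded family $\{M_\xi\chi_1:\xi\in\overline{U_2}\}$ for part~$(i)$, while part~$(ii)$ uses $\nu_E=1$ to turn $M_{-\lambda_1}$ into an isometry and identifies the phase $e^{2\pi i\lambda_1\cdot\lambda_2}$ with $\theta_{B_0}$. The only stylistic difference is that the paper first reduces to $c\in c_{00}(\Lambda)$ via the density given by Corollary \ref{cesaro-cor}, which spares one from tracking measurability and a.e.\ considerations in the Bochner space $L^p_w(E)$; your direct argument is equally valid once you note that $S_\chi(c)$ is locally a finite sum of smooth compactly supported pieces, so the fibre evaluations make sense.
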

\begin{proof}
We only show the statements for $L^p_w$ as the proofs for $C_{0,w}$ are similar. Set $\Lambda=\Lambda_1\times \Lambda_2$. Choose a bounded open neighbourhood $U$ of the origin such that the families of sets $\{ \lambda_j + U \, | \, \lambda_j \in \Lambda_j \}$, $j= 1,2$,  are pairwise disjoint. Fix $\chi_j\in \DD(U)$ with $\chi_j(0) \neq 0$, $j=1,2$, and set $\chi=\chi_1\otimes \chi_2\in \DD(U\times U)\backslash\{0\}$.
\\ \\
$(i)$ Since $c_{00}(\Lambda)$  is dense in both $(L^p_w(\R^n_\xi;E_x))^{B_0}_d (\Lambda_1 \times \Lambda_2)$ and $\ell^p_w(\Lambda_2; E^0_d(\Lambda_1))$ (Corollary \ref{cesaro-cor}), it suffices to show that these spaces induce the same topology on $c_{00}(\Lambda)$.  For all $c=(c_{\lambda_1,\lambda_2})_{(\lambda_1,\lambda_2)\in\Lambda}\in c_{00}(\Lambda)$ it holds that
\begin{align*}
\| S_{\chi}(c) \|_{L^p_w(\R^n_\xi;E_x)}^p &=  \int_{\R^n} \left\|\sum_{\lambda_2\in\Lambda_2} T_{\lambda_2}\chi_2(\xi) \sum_{\lambda_1\in\Lambda_1}c_{\lambda_1,\lambda_2}e^{-2\pi i \lambda_1(\xi-\lambda_2)}T_{\lambda_1}\chi_1\right\|_E^p w(\xi)^p \dxi \\
&=  \int_{\R^n} \sum_{\lambda_2\in\Lambda_2} \left\| \sum_{\lambda_1\in\Lambda_1}c_{\lambda_1,\lambda_2}e^{-2\pi i \lambda_1(\xi-\lambda_2)}T_{\lambda_1}\chi_1 \right\|_E^p |T_{\lambda_2}\chi_2(\xi)|^p w(\xi)^p \dxi \\
&=  \int_{\R^n} \sum_{\lambda_2\in\Lambda_2} \left\| M_{-(\xi- \lambda_2)} \sum_{\lambda_1\in\Lambda_1}c_{\lambda_1,\lambda_2}T_{\lambda_1}  M_{\xi- \lambda_2}\chi_1 \right\|_E^p   |T_{\lambda_2}\chi_2(\xi)|^p w(\xi)^p \dxi.
\end{align*}
Since the set $\{ M_\eta \chi_1 \, | \, \eta \in U \}$ is bounded in $\mathcal{D}(U)$ and $\chi_1 \neq 0$, Lemma \ref{bound-bel} implies that there is $C> 0$ such that, for all $\eta \in U$ and $c \in c_{00}(\Lambda)$,
$$
C^{-1}\left\| \sum_{\lambda_1\in\Lambda_1}c_{\lambda_1,\lambda_2}T_{\lambda_1}  \chi_1\right\|_E \leq \left\| \sum_{\lambda_1\in\Lambda_1}c_{\lambda_1,\lambda_2}T_{\lambda_1}  M_{\eta}\chi_1\right\|_E \leq C \left\|\sum_{\lambda_1\in\Lambda_1} c_{\lambda_1,\lambda_2}T_{\lambda_1} \chi_1 \right\|_E.
$$
Hence,
\begin{align*}
\| S_{\chi}(c) \|_{L^p_w(\R^n_\xi;E_x)}^p &\leq \sum_{\lambda_2\in\Lambda_2} \int_{\R^n} \left\|  \sum_{\lambda_1\in\Lambda_1}c_{\lambda_1,\lambda_2}T_{\lambda_1}  M_{\xi- \lambda_2}\chi_1\right\|_E^p |T_{\lambda_2}(\chi_2 \nu_E)(\xi)|^p w(\xi)^p \dxi \\
&\leq C^p \sum_{\lambda_2\in\Lambda_2} \left\|\sum_{\lambda_1\in\Lambda_1}c_{\lambda_1,\lambda_2}T_{\lambda_1} \chi_1\right\|_E^p \int_{U}|\chi_2(\xi)|^p\nu_E(\xi)^p w(\xi+\lambda_2)^p \dxi\\
&\leq C'\sum_{\lambda_2\in\Lambda_2} \left\|\sum_{\lambda_1\in\Lambda_1}c_{\lambda_1,\lambda_2}T_{\lambda_1} \chi_1\right\|_E^p w(\lambda_2)^p = C' \| c \|^p_{\ell^p_w(\Lambda_{2}; E^0_d(\Lambda_{1}))}
\end{align*}
for all $c \in c_{00}(\Lambda)$. Next, choose an open neighbourhood $V$ of $0$ such that $ \inf_{\xi \in  V} |\chi_2(\xi)| > 0$. Then,
\begin{align*}
\| S_{\chi}(c) \|_{L^p_w(\R^n_\xi;E_x)}^p  & \geq \sum_{\lambda_2\in\Lambda_2} \int_{\R^n} \left\|\sum_{\lambda_1\in\Lambda_1} c_{\lambda_1,\lambda_2}T_{\lambda_1}  M_{\xi- \lambda_2}\chi_1\right\|_E^p |T_{\lambda_2}(\chi_2/ \check\nu_E)(\xi)|^p w(\xi)^p \dxi\\
&\geq C^{-p}\sum_{\lambda_2\in\Lambda_2} \left\|\sum_{\lambda_1\in\Lambda_1} c_{\lambda_1,\lambda_2}T_{\lambda_1} \chi_1\right\|_E^p  \int_{V}|\chi_2(\xi)|^p \check\nu_E(\xi)^{-p} w(\xi+\lambda_2)^p \dxi\\
&\geq C'^{-1} \sum_{\lambda_2\in\Lambda_2}\left\|\sum_{\lambda_1\in\Lambda_1} c_{\lambda_1,\lambda_2}T_{\lambda_1} \chi_1\right\|_E^p w(\lambda_2)^p = C'^{-1} \| c \|^p_{\ell^p_w(\Lambda_{2}; E^0_d(\Lambda_{1}))}
\end{align*}
for all  $c \in c_{00}(\Lambda)$. This shows the result.
\\ \\
$(ii)$ As in part $(i)$, it suffices to show that the spaces  $(L^p_w(\R^n_x;E_\xi))^{B_0}_d (\Lambda_{1,x} \times \Lambda_{2,\xi})$ and $\theta_{-B_0}\ell^p_w(\Lambda_1;E^0_d(\Lambda_2))$ induce the same topology on $c_{00}(\Lambda)$. Let $c\in c_{00}(\Lambda)$ be arbitrary and set $\tilde{c}=\theta_{B_0}(c)$. Then,
$$
S_{\chi}(c)=\sum_{\lambda_1\in\Lambda_1}T_{\lambda_1}\chi_1\otimes \left(M_{-\lambda_1}\sum_{\lambda_2\in\Lambda_2}\tilde{c}_{\lambda_1,\lambda_2} T_{\lambda_2}\chi_2\right)
$$
and thus
$$
\|S_{\chi}(c)\|^p_{L^p_w(\R^n_x;E_\xi)}=\sum_{\lambda_1\in\Lambda_1} \|T_{\lambda_1}\chi_1\|^p_{L^p_w} \left\|M_{-\lambda_1}\sum_{\lambda_2\in\Lambda_2}\tilde{c}_{\lambda_1,\lambda_2} T_{\lambda_2}\chi_2\right\|^p_E.
$$
We infer that
$$
C^{-1} \|\tilde{c}\|^p_{\ell^p_w(\Lambda_1;E^0_d(\Lambda_2))} \leq \|S_{\chi}(c)\|_{L^p_w(\R^n_x;E_\xi)} \leq C \|\tilde{c}\|^p_{\ell^p_w(\Lambda_1;E^0_d(\Lambda_2))},
$$
from which the result follows.
\end{proof}

\begin{remark}\label{mixed-norm-TMIB-remark}
If $E$, $w$, $\Lambda_1$ and $\Lambda_2$ are as in Proposition \ref{mixed-norm-TMIB}$(i)$, the exact same argument as in its proof shows that
$$
(L^p_w(\R^n_\xi;E_x))^{-B_0}_d (\Lambda_{1,x} \times \Lambda_{2,\xi}) = \ell^p_w(\Lambda_2; E^0_d(\Lambda_1)), \qquad 1\leq p<\infty,
$$
topologically.
\end{remark}

\begin{proposition}
Let $E$ be a DTMIB with the Radon-Nikod\'ym property and let $w$ be a polynomially bounded weight function on $\R^n$. Let $\Lambda_1$ and $\Lambda_2$ be two lattices in $\R^n$.
\begin{itemize}
\item[$(i)$] It holds that
$$
(L^p_w(\R^n_\xi;E_x))^{B_0}_d (\Lambda_{1,x} \times \Lambda_{2,\xi}) = \ell^p_w(\Lambda_{2}; E^0_d(\Lambda_{1})), \qquad 1 < p \leq \infty,
$$
 topologically.
 \item[$(ii)$] Suppose that $\nu_E= 1$.  Then,
\begin{gather*}
(L^p_w(\R^n_x;E_\xi))^{B_0}_d (\Lambda_{1,x} \times \Lambda_{2,\xi}) = \theta_{-B_0}\ell^p_w(\Lambda_1;E^0_d(\Lambda_2)), \qquad 1< p \leq \infty,
\end{gather*}
topologically.
\end{itemize}
 \end{proposition}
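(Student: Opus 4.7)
Write $E = (E_0)'$ for some TMIB $E_0$. Since $E = (E_0)'$ has the Radon--Nikod\'ym property, Example~\ref{examples-TMIB}(iii) yields, for $1 < p \leq \infty$ with H\"older conjugate $q \in [1,\infty)$,
$$
L^p_w(\R^n_\xi;E_x) = (L^q_{1/w}(\R^n_\xi;(E_0)_x))', \qquad L^p_w(\R^n_x;E_\xi) = (L^q_{1/w}(\R^n_x;(E_0)_\xi))',
$$
where the right-hand sides are TMIB on $\R^{2n}$ (because $1 \leq q < \infty$). The plan for both (i) and (ii) is identical: apply Proposition~\ref{dual-seq} to pass from the discrete space of $L^p_w(E)$ to the dual of the discrete space of the predual TMIB; invoke the previous proposition (part (i) via Remark~\ref{mixed-norm-TMIB-remark} for the $-B_0$ version, part (ii) via the same computation as in its proof with the sign of $B_0$ reversed) to rewrite that predual discrete space as a weighted Bochner-$\ell^q$ of a discrete space associated to $E_0$; dualise the vector-valued $\ell^q$; and apply Proposition~\ref{dual-seq} once more inside the target to recover $E^0_d$.

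\textbf{Execution of (i).} The chain of identifications reads
\begin{align*}
(L^p_w(\R^n_\xi;E_x))^{B_0}_d(\Lambda_1\times\Lambda_2)
&= \bigl((L^q_{1/w}(\R^n_\xi;(E_0)_x))^{-B_0}_d(\Lambda_1\times\Lambda_2)\bigr)' \\
&= \bigl(\ell^q_{1/w}(\Lambda_2;(E_0)^0_d(\Lambda_1))\bigr)' \\
&= \ell^p_w(\Lambda_2;((E_0)^0_d(\Lambda_1))') \\
&= \ell^p_w(\Lambda_2;E^0_d(\Lambda_1)),
\end{align*}
using Proposition~\ref{dual-seq} in the first line, Remark~\ref{mixed-norm-TMIB-remark} in the second, standard duality of Bochner sequence spaces (which for $1 \leq q < \infty$ holds for any Banach target since the measure is purely atomic) in the third, and Proposition~\ref{dual-seq} once more in the fourth. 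Agreement of the pairings reduces to the observation that, on $c_{00}(\Lambda_1\times\Lambda_2)$, all of them act as $(c,c') \mapsto \sum_\lambda c_\lambda c'_\lambda$, and $c_{00}$ is dense in the relevant TMIB-associated discrete space by Corollary~\ref{cesaro-cor}.

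\textbf{Execution of (ii) and the main difficulty.} Part (ii) proceeds in the same fashion, but first one needs the $-B_0$-analogue of part (ii) of the previous proposition. Repeating the computation there with the opposite sign of $B_0$ gives
$$
T^{-B_0}_{(\lambda_1,\lambda_2)}(\chi_1\otimes\chi_2) = e^{-2\pi i \lambda_1\cdot\lambda_2}\, T_{\lambda_1}\chi_1 \otimes M_{\lambda_1}T_{\lambda_2}\chi_2,
$$
and using $\nu_{E_0} = \nu_E = 1$ one obtains
$$
(L^q_{1/w}(\R^n_x;(E_0)_\xi))^{-B_0}_d(\Lambda_1\times\Lambda_2) = \theta_{B_0}\ell^q_{1/w}(\Lambda_1;(E_0)^0_d(\Lambda_2)).
$$
The remaining steps mirror those of (i), after one observes that $\theta_B$ is an isometric bijection on any weighted (vector-valued) sequence space and that, under the pairing $\langle c, c'\rangle = \sum_\lambda c_\lambda c'_\lambda$, the dual of $\theta_{B_0}X$ is naturally $\theta_{-B_0}X'$. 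The only real subtlety throughout is the careful bookkeeping of the phase factors $e^{\pm 2\pi i \lambda_1\cdot\lambda_2}$ and the verification that every isomorphism arises from the canonical pairing of Proposition~\ref{dual-seq}; once this is checked on $c_{00}(\Lambda_1\times\Lambda_2)$, the identification extends by continuity and density.
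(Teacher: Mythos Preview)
Your proof is correct and follows essentially the same route as the paper: dualise $L^p_w(E)$ to the predual TMIB $L^q_{1/w}(E_0)$, apply Proposition~\ref{dual-seq}, use Remark~\ref{mixed-norm-TMIB-remark} (resp.\ its $-B_0$-analogue) to identify the predual discrete space as a vector-valued $\ell^q$, dualise again, and apply Proposition~\ref{dual-seq} at the inner level. The only difference is in the justification of $(\ell^q_{1/w}(\Lambda_2;(E_0)^0_d(\Lambda_1)))' = \ell^p_w(\Lambda_2;E^0_d(\Lambda_1))$: the paper verifies that $E^0_d(\Lambda_1)$ inherits the Radon--Nikod\'ym property from $E$ via Corollary~\ref{complemented} and then quotes the general Bochner-duality theorem, while you observe (correctly) that for counting measure the duality $(\ell^q(X))' = \ell^p(X')$, $1\le q<\infty$, holds for every Banach space $X$, so no RNP check is needed at this step.
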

\begin{proof}
We only show $(i)$ as $(ii)$ can be treated similarly. Suppose that $E = E'_0$, where $E_0$ is a TMIB. Let $q$ be the H\"older conjugate index to $p$. As $E$ satisfies the Radon-Nikod\'ym property, we have that
 $$
L^p_w(\R^n;E) = (L^q_{1/w}(\R^n;E_0))'.
 $$
 Proposition \ref{dual-seq} yields that $E^0_d(\Lambda_{1}) = ((E_0)^0_d(\Lambda_1))'$.
Since, by Corollary \ref{complemented}, $E^0_d(\Lambda_{1})$ also satisfies the Radon-Nikod\'ym property, we  have that
$$
\ell^p_w(\Lambda_{2}; E^0_d(\Lambda_{1})) =  (\ell^q_{1/w}(\Lambda_{2}; (E_0)^0_d(\Lambda_{1})))'.
$$
The result now follows from Proposition \ref{dual-seq} and \ref{mixed-norm-TMIB-remark}.
\end{proof}

Next, we discuss completed tensor products of two TMIB on $\R^n$.  Note that $B_0$ is not the direct sum of two $n \times n$-matrices and therefore this matrix is not covered by Proposition \ref{thmforTenspr}. It would be interesting to determine the discrete space associated to the completed tensor product of two TMIB with respect to $B_0$ but even for most of the $L^p$-spaces we do not know how to do this:

\begin{problem}\label{tensor-problem}
Let $\Lambda_1$ and $\Lambda_2$ be lattices in $\R^n$. Let $\tau$ denote either $\pi$ or $\epsilon$. Give an explicit description of $(L^{p_1} \widehat{\otimes}_{\tau} L^{p_2})^{B_0}_d(\Lambda_{1} \times \Lambda_{2})$ for $1 \leq p_1, p_2 < \infty$.
\end{problem}

For $\tau = \pi$ and $p_1 = 1$ we have the following (trivial) answer to Problem \ref{tensor-problem} (cf.\ \cite[Section 2.3]{ryan}):
$$
(L^1 \widehat{\otimes}_\pi L^{p_2})^{B_0}_d(\Lambda_{1} \times \Lambda_{2}) = (L^1(L^{p_2}))^{B_0}_d(\Lambda_{1} \times \Lambda_{2}) = \ell^1(\Lambda_1;\ell^{p_2}(\Lambda_2)),
$$
where the second equality follows from Example \ref{examples-sequences}$(i)$.

We now provide an answer to Problem \ref{tensor-problem} for $p_2=2$ and $p_1$ varying in a certain range. In fact, we are able to show the following more general result.

\begin{proposition}\label{seq-spa-for-the-pi-prod}
Let $\Lambda_1$ and $\Lambda_2$ be two lattices in $\R^n$, let $w_1$ and $w_2$ be two polynomially bounded weight functions on $\R^n$, and let $1 \leq p_1, p_2 < \infty$. Then,
\begin{itemize}
\item[$(i)$] $(L^{p_1}_{w_1}\widehat{\otimes}_{\pi} \mathcal{F}L^{p_2}_{w_2})^{B_0}_d(\Lambda_1\times \Lambda_2)=\theta_{-B_0}\ell^1_{w_1w_2}(\Lambda_1;(\mathcal{F}L^{p_2})^0_d(\Lambda_2))$  if $p^{-1}_1+p^{-1}_2 \geq  1$,
\item[$(ii)$] $(L^{p_1}_{w_1}\widehat{\otimes}_{\epsilon} \mathcal{F}L^{p_2}_{w_2})^{B_0}_d(\Lambda_1\times \Lambda_2)=\theta_{-B_0}c_{0,w_1w_2}(\Lambda_1;(\mathcal{F}L^{p_2})^0_d(\Lambda_2))$ if $p^{-1}_1+p^{-1}_2 \leq  1$,
\end{itemize}
topologically.
\end{proposition}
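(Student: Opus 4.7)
The plan is to use the product window $\chi = \chi_1 \otimes \chi_2$ with $\chi_j \in \mathcal{D}(U_j) \setminus \{0\}$, for which a direct computation of $T^{B_0}_{(\lambda_1,\lambda_2)}(\chi_1 \otimes \chi_2)$ rearranges into
\[
  S_\chi(c)(t,\eta) = \sum_{\lambda_1 \in \Lambda_1} T_{\lambda_1}\chi_1(t) \otimes M_{-\lambda_1} S_{\chi_2}(\tilde c_{\lambda_1,\cdot})(\eta), \qquad \tilde c := \theta_{B_0}(c),
\]
so that $\tilde c_{\lambda_1,\lambda_2} = c_{\lambda_1,\lambda_2}\, e^{2\pi i \lambda_1 \cdot \lambda_2}$. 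Because the $T_{\lambda_1}\chi_1$ have pairwise disjoint supports $\lambda_1 + U_1$, one has $\|T_{\lambda_1}\chi_1\|_{L^{p_1}_{w_1}} \asymp w_1(\lambda_1)$. Writing $\mathcal{F}^{-1} S_{\chi_2}(\tilde c_{\lambda_1,\cdot}) = \mathcal{F}^{-1}(\chi_2) \cdot P_{\lambda_1}$ with $P_{\lambda_1}$ a $\Lambda_2^\perp$-periodic function of $L^{p_2}(\R^n/\Lambda_2^\perp)$-norm equivalent to $\|\tilde c_{\lambda_1,\cdot}\|_{(\mathcal{F}L^{p_2})^0_d(\Lambda_2)}$ (by Proposition~\ref{Fourier-coeff-seq} and Corollary~\ref{cor-for-equ-wight-notwei-four-spa}, so the inner norm is weight-independent), and using that $M_{-\lambda_1}$ becomes $T_{-\lambda_1}$ under inverse Fourier so that the Schwartz envelope $\mathcal{F}^{-1}\chi_2$ is translated to be concentrated near $-\lambda_1$, one obtains
\[
  \|M_{-\lambda_1} S_{\chi_2}(\tilde c_{\lambda_1,\cdot})\|_{\mathcal{F}L^{p_2}_{w_2}} \asymp w_2(\lambda_1)\, \|\tilde c_{\lambda_1,\cdot}\|_{(\mathcal{F}L^{p_2})^0_d(\Lambda_2)}.
\]
These two estimates are the workhorses of the proof.

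For part (i), the projective-tensor upper bound $\|S_\chi(c)\|_{L^{p_1}_{w_1}\widehat{\otimes}_\pi\mathcal{F}L^{p_2}_{w_2}} \leq C \sum_{\lambda_1} w_1(\lambda_1)w_2(\lambda_1) \|\tilde c_{\lambda_1,\cdot}\|_{(\mathcal{F}L^{p_2})^0_d(\Lambda_2)}$ is immediate from the triangle inequality of the $\pi$-norm applied to the decomposition above, and holds without any restriction on $p_1, p_2$. For the converse, reduce to finitely supported $c$ via Corollary~\ref{cesaro-cor}, and via Proposition~\ref{dual-seq} pick near-optimal duals $d^{\lambda_1} \in (\mathcal{F}L^{q_2})^0_d(\Lambda_2)$ with $\|d^{\lambda_1}\| \leq 1$ and $\langle d^{\lambda_1}, \tilde c_{\lambda_1,\cdot}\rangle$ close to $\|\tilde c_{\lambda_1,\cdot}\|_{(\mathcal{F}L^{p_2})^0_d}$. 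Fix $\phi \in \mathcal{D}(U_1)$, $\tilde\chi_2 \in \mathcal{D}(U_2)$ with $\int \chi_1 \phi = \int \chi_2 \tilde\chi_2 = 1$, and unimodular $\alpha_{\lambda_1}$, and define
\[
  Tf := \sum_{\lambda_1} \alpha_{\lambda_1}\, w_1(\lambda_1) w_2(\lambda_1)\, (f \ast \check\phi)(\lambda_1)\, M_{-\lambda_1} S_{\tilde\chi_2}(d^{\lambda_1}).
\]
Evaluation of the bilinear form $(f,g) \mapsto \langle Tf, g\rangle$ on $S_\chi(c)$ yields $\approx \sum_{\lambda_1} w_1(\lambda_1) w_2(\lambda_1) \|\tilde c_{\lambda_1,\cdot}\|_{(\mathcal{F}L^{p_2})^0_d}$, thanks to the biorthogonality from the disjoint-support normalisation $\int T_{\lambda_1}\chi_1 \cdot T_{\lambda_1'}\phi = \delta_{\lambda_1,\lambda_1'}$ in the first variable and the analogous biorthogonality (coming from the dual pairing on $(\mathcal{F}L^{p_2})^0_d$ via Proposition~\ref{dual-seq}) in the second. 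The main obstacle is the operator-norm bound $\|T\|_{\mathcal{L}(L^{p_1}_{w_1}, \mathcal{F}L^{q_2}_{1/w_2})} \leq C$: after inverse Fourier, each summand of $\mathcal{F}^{-1}(Tf)$ is a uniform Schwartz envelope $\mathcal{F}^{-1}\tilde\chi_2$ translated by $-\lambda_1$ times a $\Lambda_2^\perp$-periodic function of unit $L^{q_2}(\R^n/\Lambda_2^\perp)$-norm, so the $L^{q_2}_{1/w_2}$-norm of $\mathcal{F}^{-1}(Tf)$ is controlled by the $\ell^{q_2}_{1/(w_1 w_2)}$-norm of the scalar coefficients. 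Combined with the Wiener amalgam sampling inequality $\|((f\ast\check\phi)(\lambda))_\lambda\|_{\ell^{p_1}_{w_1}} \leq C\|f\|_{L^{p_1}_{w_1}}$ and the decisive embedding $\ell^{p_1} \hookrightarrow \ell^{q_2}$, valid precisely when $p_1 \leq q_2$ (i.e., $p_1^{-1} + p_2^{-1} \geq 1$), this gives $\|T\| \leq C$ and closes the lower bound.

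Part (ii) is handled by a dual version of the same scheme. The lower bound is unconditional: for each fixed $\lambda_1^\star \in \Lambda_1$, the rank-one bilinear form
\[
  B^{(\lambda_1^\star)}(f, g) := w_1(\lambda_1^\star) w_2(\lambda_1^\star)\, (f \ast \check\phi)(\lambda_1^\star)\, \langle g, M_{-\lambda_1^\star} S_{\tilde\chi_2}(d^{\lambda_1^\star})\rangle
\]
has uniformly bounded norm on $L^{p_1}_{w_1} \times \mathcal{F}L^{p_2}_{w_2}$ by direct H\"older estimates (the weight factor $w_1(\lambda_1^\star) w_2(\lambda_1^\star)$ cancels with the weights in the bounds $|(f\ast\check\phi)(\lambda_1^\star)| \lesssim w_1(\lambda_1^\star)^{-1}\|f\|_{L^{p_1}_{w_1}}$ and $\|M_{-\lambda_1^\star} S_{\tilde\chi_2}(d^{\lambda_1^\star})\|_{\mathcal{F}L^{q_2}_{1/w_2}} \lesssim w_2(\lambda_1^\star)^{-1}$), and rank-one bounded bilinear forms always belong to the dual of the injective tensor product; evaluation on $S_\chi(c)$ extracts $\approx w_1(\lambda_1^\star)w_2(\lambda_1^\star) \|\tilde c_{\lambda_1^\star,\cdot}\|_{(\mathcal{F}L^{p_2})^0_d}$, whence $\sup_{\lambda_1} w_1(\lambda_1)w_2(\lambda_1)\|\tilde c_{\lambda_1,\cdot}\|_{(\mathcal{F}L^{p_2})^0_d} \leq C \|S_\chi(c)\|_{L^{p_1}_{w_1}\widehat{\otimes}_\epsilon\mathcal{F}L^{p_2}_{w_2}}$, with the $c_0$-decay at infinity obtained from the density of finitely supported sequences (Corollary~\ref{cesaro-cor}). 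The hypothesis $p_1^{-1} + p_2^{-1} \leq 1$ now enters the matching upper bound: testing the $\epsilon$-norm against product functionals $\phi \otimes \psi$ with $\phi \in L^{q_1}_{1/w_1}$ and $\psi \in \mathcal{F}L^{q_2}_{1/w_2}$ of unit norm, and performing the analogous Schwartz-envelope analysis on the inverse Fourier side of $\psi$, one obtains an $\ell^{q_2}$-summability bound for the coefficients $\langle M_{-\lambda_1} S_{\chi_2}(\tilde c_{\lambda_1,\cdot}), \psi \rangle$ against the $\sup$-norm of the $\|\tilde c_{\lambda_1,\cdot}\|$'s; this combines, via H\"older and the Wiener amalgam $\ell^{q_1}$-sampling bound on $\langle T_{\lambda_1}\chi_1, \phi\rangle$, with the embedding $\ell^{q_2} \hookrightarrow \ell^{p_1}$ (valid precisely when $q_2 \leq p_1$, i.e., $p_1^{-1} + p_2^{-1} \leq 1$) to yield the sup-type upper bound. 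In both parts the hard step is the operator-norm estimate, built on Wiener amalgam sampling together with the relevant $\ell^{p_1}$--$\ell^{q_2}$ embedding dictated by the exponent condition.
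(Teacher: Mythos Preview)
Your strategy is sound and the argument goes through once a few bookkeeping issues are repaired, but it is genuinely different from the paper's proof. Two small corrections are needed. First, in the definition of $T$ the modulation should be $M_{+\lambda_1}$ rather than $M_{-\lambda_1}$: under the bilinear $\mathcal{S}'$-pairing one has $\langle M_{\xi}a,b\rangle=\langle a,M_{\xi}b\rangle$, so $\langle M_{\lambda_1}S_{\tilde\chi_2}(d^{\lambda_1}),\,M_{-\lambda_1}S_{\chi_2}(\tilde c_{\lambda_1,\cdot})\rangle=\langle S_{\tilde\chi_2}(d^{\lambda_1}),S_{\chi_2}(\tilde c_{\lambda_1,\cdot})\rangle$, which is what you want; with $M_{-\lambda_1}$ a spurious $M_{-2\lambda_1}$ remains. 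Second, the dual of $\mathcal{F}L^{p_2}_{w_2}$ under the $\mathcal{S}'$-pairing is $\mathcal{F}L^{q_2}_{1/\check w_2}$, not $\mathcal{F}L^{q_2}_{1/w_2}$; this matters because after $M_{+\lambda_1}$ the Schwartz envelope is centred at $-\lambda_1$, and $1/\check w_2(-\lambda_1)=1/w_2(\lambda_1)$ is exactly the factor that cancels the $w_2(\lambda_1)$ you inserted. With these fixes your operator-norm estimate for $T$ follows from a H\"older splitting of the Schwartz envelope $\Psi=\mathcal F^{-1}\tilde\chi_2$ (write $|\Psi|=|\Psi|^{1/q_2}|\Psi|^{1/p_2}$, use $\sup_\xi\sum_{\lambda_1}|\Psi(\xi+\lambda_1)|<\infty$, then decompose over $\Lambda_2^\perp$-cells to exploit $\|Q_{\lambda_1}\|_{L^{q_2}}\leq C$), giving $\|Tf\|_{(\mathcal{F}L^{p_2}_{w_2})'}\leq C\|((f\ast\check\phi)(\lambda_1))\|_{\ell^{q_2}_{w_1}}$, and then $\ell^{p_1}_{w_1}\hookrightarrow\ell^{q_2}_{w_1}$ closes it.

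The paper takes a quite different route for the hard inclusions. Rather than building a near-norming operator in $(L^{p_1}_{w_1}\widehat\otimes_\pi\mathcal{F}L^{p_2}_{w_2})'=\mathcal{L}(L^{p_1}_{w_1},(\mathcal{F}L^{p_2}_{w_2})')$, it introduces the auxiliary window $\psi_1(x,\xi)=e^{-2\pi i x\cdot\xi}\psi(x)\psi(\xi)$ and uses the factorisation $R_{\check\psi_2}=R_{\psi_1}\circ(\,\cdot\,\#\,\psi_1)$ together with Corollary~\ref{complemented}. The exponent condition then enters through Young's inequality in the elementary estimate $\|(f\otimes g)\ast_{B_0}\psi_1\|_{L^1_{w_1w_2}(\mathcal{F}L^{p_2})}\leq C\|f\|_{L^{p_1}_{w_1}}\|g\|_{\mathcal{F}L^{p_2}_{w_2}}$ (valid for $q_2\geq p_1$), which immediately transports $L^{p_1}_{w_1}\widehat\otimes_\pi\mathcal{F}L^{p_2}_{w_2}$ into $L^1_{w_1w_2}(\mathcal{F}L^{p_2})$; one then invokes Propositions~\ref{sampling} and~\ref{mixed-norm-TMIB} to land in the target sequence space. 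The $\epsilon$-case is handled analogously, first passing through $C_{0,w_1}(\mathcal{F}L^{p_2}_{w_2})=C_{0,w_1}\widehat\otimes_\epsilon\mathcal{F}L^{p_2}_{w_2}$. Your approach is more hands-on and avoids the twisted-convolution machinery entirely, at the price of a slightly more delicate direct estimate; the paper's approach is more structural, reducing everything to the already-established Propositions~\ref{sampling} and~\ref{mixed-norm-TMIB} after a single smoothing step. Both identify the exponent restriction with the same $\ell^{p_1}$--$\ell^{q_2}$ phenomenon.
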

\begin{proof} Set $\Lambda=\Lambda_1\times\Lambda_2$. Let $U\subset \R^n$ be a bounded open neighbourhood of the origin such that the families $\{\lambda_j+U\, |\, \lambda_j\in \Lambda_j\}$, $j=1,2$, are pairwise disjoint. Choose $A_j, \kappa_j > 0$ such that $w_j(x+y) \leq A_jw_j(x)(1+|y|)^{\kappa_j}$, $j=1,2$, for all $x,y\in\R^n$. We write $q_j$ for the H\"older conjugate index to $p_j$, $j = 1,2$. By the closed graph theorem, it suffices to show that the identities in $(i)$ and $(ii)$ hold algebraically.\\
\\
\noindent $(i)$ We first show that $\theta_{-B_0}\ell^1_{w_1w_2}(\Lambda_1;(\mathcal{F}L^{p_2})^0_d(\Lambda_2)) \subseteq (L^{p_1}_{w_1}\widehat{\otimes}_{\pi} \mathcal{F}L^{p_2}_{w_2})^{B_0}_d(\Lambda)$. Pick $\chi_1,\chi_2\in\DD(U)\backslash\{0\}$ and set $\chi=\chi_1\otimes\chi_2\in\DD(U\times U)\backslash\{0\}$. Let $c \in \theta_{-B_0}\ell^1_{w_1w_2}(\Lambda_1;(\mathcal{F}L^{p_2})^0_d(\Lambda_2))$ be arbitrary and set $\tilde{c}=\theta_{B_0}(c)\in \ell^1_{w_1w_2}(\Lambda_1;(\mathcal{F}L^{p_2})^0_d(\Lambda_2))$. We have that
\begin{equation}\label{equ-ex-for-map-ser}
S_{\chi}(c)=\sum_{\lambda_1\in\Lambda_1}T_{\lambda_1}\chi_1 \otimes \left(M_{-\lambda_1}\sum_{\lambda_2\in\Lambda_2} \tilde{c}_{\lambda_1,\lambda_2}T_{\lambda_2}\chi_2\right).
\end{equation}
Corollary \ref{cor-for-equ-wight-notwei-four-spa} implies that, for $\lambda_1\in\Lambda_1$ fixed, $(\tilde{c}_{\lambda_1,\lambda_2})_{\lambda_2\in\Lambda_2}\in (\mathcal{F}L^{p_2}_{w_2})^0_d(\Lambda_2)$ and that
\begin{align*}
\left\|M_{-\lambda_1}\sum_{\lambda_2\in\Lambda_2} \tilde{c}_{\lambda_1,\lambda_2} T_{\lambda_2}\chi_2\right\|_{\mathcal{F}L^{p_2}_{w_2}} &\leq A_2w_2(\lambda_1) \|(\tilde{c}_{\lambda_1,\lambda_2})_{\lambda_2\in\Lambda_2} \|_{(\mathcal{F}L^{p_2}_{(1+|\,\cdot \,|)^{\kappa_2}})^0_d(\Lambda_2)}\\
&\leq Cw_2(\lambda_1) \|(\tilde{c}_{\lambda_1,\lambda_2})_{\lambda_2\in\Lambda_2} \|_{(\mathcal{F}L^{p_2})^0_d(\Lambda_2)}.
\end{align*}
We obtain that the series in the right-hand side of \eqref{equ-ex-for-map-ser} (over $\Lambda_1$) is absolutely summable in $L^{p_1}_{w_1}\widehat{\otimes}_{\pi} \mathcal{F}L^{p_2}_{w_2}$. This shows the desired inclusion. Next, we prove that $(L^{p_1}_{w_1}\widehat{\otimes}_{\pi} \mathcal{F}L^{p_2}_{w_2})^{B_0}_d(\Lambda)\subseteq \theta_{-B_0}\ell^1_{w_1w_2}(\Lambda_1;(\mathcal{F}L^{p_2})^0_d(\Lambda_2))$. Let $r>0$ be such that $[-2r,2r]^n\subset U$. Pick $\psi\in\DD_{[-r,r]^n}\backslash\{0\}$ and set
$$
\psi_1(x,\xi)=e^{-2\pi i x\cdot\xi}\psi(x)\psi(\xi)\quad \mbox{and}\quad \psi_2=(\psi_1\ast_{B_0}\psi_1)\check.
$$
We choose $\psi$ so  that $\psi_2$ is not the zero function. Then, $\psi_1\in\DD_{[-r,r]^{2n}}\backslash\{0\}$ and $\psi_2\in\DD(U\times U)\backslash\{0\}$. Furthermore, choose $\chi\in \DD(U\times U)$ such that $(\theta_{B_0}(\psi_2),\overline{\chi})_{L^2}=1$. Corollary \ref{complemented} implies that
\begin{equation}\label{equ-for-equ-of-seq-s}
c=R_{\check{\psi}_2}(S_{\chi}(c))\in R_{\check{\psi}_2}(L^{p_1}_{w_1}\widehat{\otimes}_{\pi} \mathcal{F}L^{p_2}_{w_2}),\qquad c\in(L^{p_1}_{w_1}\widehat{\otimes}_{\pi} \mathcal{F}L^{p_2}_{w_2})^{B_0}_d(\Lambda).
\end{equation}
We claim that
\begin{equation}\label{equ-for-the-claim-of-s}
F\ast_{B_0}\psi_1\in L^1_{w_1w_2}(\mathcal{F}L^{p_2}),\qquad F\in L^{p_1}_{w_1} \widehat{\otimes}_{\pi} \mathcal{F}L^{p_2}_{w_2}.
\end{equation}
Before we prove \eqref{equ-for-the-claim-of-s}, let us show how it entails the result. For all $ F\in L^{p_1}_{w_1} \widehat{\otimes}_{\pi} \mathcal{F}L^{p_2}_{w_2}$, we have that
$$
R_{\check{\psi}_2}(F)=(F\ast_{B_0}(\psi_1\ast_{B_0}\psi_1)(\lambda))_{\lambda\in\Lambda} =((F\ast_{B_0}\psi_1)\ast_{B_0}\psi_1(\lambda))_{\lambda\in\Lambda}=R_{\psi_1}(F\ast_{B_0} \psi_1).
$$
Hence, in view of  \eqref{equ-for-equ-of-seq-s} and \eqref{equ-for-the-claim-of-s}, the desired inclusion follows from Proposition \ref{sampling} and Proposition \ref{mixed-norm-TMIB}$(ii)$. It remains to prove \eqref{equ-for-the-claim-of-s}. Let $F\in L^{p_1}_{w_1}\widehat{\otimes}_{\pi} \mathcal{F}L^{p_2}_{w_2}$ be arbitrary. Then,
\begin{equation}\label{ser-rep-for-capf}
F=\sum_{j=0}^{\infty}a_j f_j\otimes g_j,
\end{equation}
where $f_j,g_j\in\SSS(\R^n)$ are such that $(f_j)_{j\in\N}$ is bounded in $L^{p_1}_{w_1}$ and $(g_j)_{j\in\N}$ is bounded in $\mathcal{F}L^{p_2}_{w_2}$, and $a_j\in \C$ are such that $\sum_{j=0}^{\infty}|a_j|<\infty$ (cf.\ \cite[Proposition 2.8]{ryan}). For all $f,g\in\SSS(\R^n)$ it holds that
\begin{align}
(f\otimes g)\ast_{B_0}\psi_1(t,\eta)&=\iint_{\R^{2n}} f(x)g(\xi) \psi(t-x)\psi(\eta-\xi) e^{-2\pi i t\cdot(\eta-\xi)}\dx\dxi \label{equ-for-con-with-gff}\\
&=f*\psi(t) g*(M_{-t}\psi)(\eta).\nonumber
\end{align}
We estimate as follows
\begin{align*}
\|&(f\otimes g)\ast_{B_0}\psi_1\|_{L^1_{w_1w_2}(\mathcal{F}L^{p_2})}\\
&= \int_{\R^n}|f*\psi(t)|w_1(t)w_2(t)\|(\mathcal{F}^{-1}g)(T_t\mathcal{F}^{-1} \psi)\|_{L^{p_2}}\dt\\
&\leq A_2\|f*\psi\|_{L^{q_2}_{w_1}} \left(\iint_{\R^{2n}}|\mathcal{F}^{-1}g(\xi)|^{p_2}w_2(\xi)^{p_2} |\mathcal{F}^{-1}\psi(\xi-t)|^{p_2}(1+|t-\xi|)^{\kappa_2 p_2}\dt\dxi\right)^{1/p_2}\\
&= A_2\|f*\psi\|_{L^{q_2}_{w_1}} \|g\|_{\mathcal{F}L^{p_2}_{w_2}}\|\psi\|_{\mathcal{F}L^{p_2}_{(1+|\, \cdot \,|)^{\kappa_2}}} \\
&\leq C \|f\|_{L^{p_1}_{w_1}} \|g\|_{\mathcal{F}L^{p_2}_{w_2}},
\end{align*}
where the last inequality follows from  Young's inequality (note that $q_2\geq p_1$). The representation \eqref{ser-rep-for-capf} and the above estimate yield that $F\ast_{B_0}\psi_1\in L^1_{w_1w_2}(\mathcal{F}L^{p_2})$.
\\ \\
\noindent $(ii)$ The assumption on $p_1$ and $p_2$ implies that $1 < p_1, p_2 < \infty$ and thus also $1 < q_1, q_2 < \infty$.  First we prove that $\theta_{-B_0}c_{0,w_1w_2}(\Lambda_1;(\mathcal{F}L^{p_2})^0_d(\Lambda_2)) \subseteq (L^{p_1}_{w_1}\widehat{\otimes}_{\epsilon} \mathcal{F}L^{p_2}_{w_2})^{B_0}_d(\Lambda)$. Choose $\chi_1,\tilde{\chi}_2\in\DD(U)\backslash\{0\}$ such that $\chi_2= \tilde{\chi}_2\ast \tilde{\chi}_2\in \DD(U)\backslash\{0\}$ and set $\chi=\chi_1\otimes\chi_2\in\DD(U\times U)\backslash\{0\}$. Let $c\in \theta_{-B_0}c_{0,w_1w_2}(\Lambda_1;(\mathcal{F}L^{p_2})^0_d(\Lambda_2))$ be arbitrary and set $\tilde{c}=\theta_{B_0}(c)$. Then, the representation \eqref{equ-ex-for-map-ser} holds true and, as in part $(i)$, Corollary \ref{cor-for-equ-wight-notwei-four-spa} implies that
$$
g_{\lambda_1} = M_{-\lambda_1}\sum_{\lambda_2\in\Lambda_2} \tilde{c}_{\lambda_1,\lambda_2}T_{\lambda_2}\chi_2 \in \mathcal{F}L^{p_2}_{w_2},\quad \lambda_1\in\Lambda_1.
$$
We now show that the series in the right-hand side of \eqref{equ-ex-for-map-ser} (over $\Lambda_1$) is unconditionally convergent in $L^{p_1}_{w_1}\widehat{\otimes}_{\epsilon} \mathcal{F}L^{p_2}_{w_2}$. Denote by $K_1$ and $K_2$ the closed unit balls in $L^{q_1}_{1/w_1}=(L^{p_1}_{w_1})'$ and $\mathcal{F}L^{q_2}_{1/\check{w}_2}=(\mathcal{F}L^{p_2}_{w_2})'$, respectively. Set
$$
A_3 =\sup_{x\in U}(1+|x|)^{\kappa_1},\quad A_4=\left(\sup_{x\in\R^n} \sum_{\lambda_1\in\Lambda_1} |\widehat{\tilde{\chi}}_2(x+\lambda_1)|^{q_2} (1+|x+\lambda_1|)^{\kappa_2q_2}\right)^{1/q_2}.
$$
Let $\varepsilon>0$ be arbitrary. As $\tilde{c}\in c_{0,w_1w_2}(\Lambda_1;(\mathcal{F}L^{p_2})^0_d(\Lambda_2))$, there is a finite subset $\Lambda^{(0)}_1$ of $\Lambda_1$ such that, for all $\lambda_1\in\Lambda_1\backslash\Lambda^{(0)}_1$,
$$
w_1(\lambda_1)w_2(\lambda_1)\left\|\sum_{\lambda_2\in\Lambda_2} \tilde{c}_{\lambda_1,\lambda_2} T_{\lambda_2}\tilde{\chi}_2\right\|_{\mathcal{F}L^{p_2}}\leq (2\|\chi_1\|_{L^{p_1}}A_1A_2 A_3A_4)^{-1}\varepsilon =:\varepsilon_1.
$$
For any $\Lambda_1^{(0)}\subseteq \Lambda'_1,\Lambda''_1\subset \Lambda_1$, $\Lambda'_1$ and $\Lambda''_1$ finite, and $f_1\in K_1$, $f_2\in K_2$, we have that
\begin{align*}
&\left|\left\langle f_1\otimes f_2, \sum_{\lambda_1\in \Lambda'_1} T_{\lambda_1}\chi_1\otimes g_{\lambda_1}- \sum_{\lambda_1\in \Lambda''_1} T_{\lambda_1}\chi_1\otimes g_{\lambda_1} \right\rangle\right|\\
&\leq \sum_{\lambda_1\in \Lambda'_1\backslash \Lambda^{(0)}_1} |\langle f_1,T_{\lambda_1}\chi_1\rangle| |\langle f_2, g_{\lambda_1}\rangle| + \sum_{\lambda_1\in \Lambda''_1\backslash \Lambda^{(0)}_1} |\langle f_1,T_{\lambda_1}\chi_1\rangle| |\langle f_2, g_{\lambda_1}\rangle|.
\end{align*}
Denote these sums by $I'$ and $I''$, respectively. We estimate $I'$ as follows
\begin{align*}
I'&\leq \|\chi_1\|_{L^{p_1}}\sum_{\lambda_1\in \Lambda'_1\backslash \Lambda^{(0)}_1} \|f_1\|_{L^{q_1}(\lambda_1+U)} \|(M_{-\lambda_1}f_2)\ast\check{\tilde{\chi}}_2\|_{\mathcal{F}L^{q_2}} \left\|\sum_{\lambda_2\in\Lambda_2} \tilde{c}_{\lambda_1,\lambda_2} T_{\lambda_2}\tilde{\chi}_2\right\|_{\mathcal{F}L^{p_2}}\\
&\leq \varepsilon_1\|\chi_1\|_{L^{p_1}}\sum_{\lambda_1\in \Lambda'_1\backslash \Lambda^{(0)}_1} \frac{\|f_1\|_{L^{q_1}(\lambda_1+U)}}{w_1(\lambda_1)}\cdot \frac{\|\mathcal{F}^{-1}f_2 T_{-\lambda_1}\widehat{\tilde{\chi}}_2\|_{L^{q_2}}}{w_2(\lambda_1)}\\
&\leq \varepsilon_1\|\chi_1\|_{L^{p_1}}\left(\sum_{\lambda_1\in \Lambda_1} \frac{\|f_1\|^{p_2}_{L^{q_1}(\lambda_1+U)}}{w_1(\lambda_1)^{p_2}}\right)^{1/p_2} \left(\sum_{\lambda_1\in \Lambda_1} \frac{\|\mathcal{F}^{-1}f_2 T_{-\lambda_1}\widehat{\tilde{\chi}}_2\|^{q_2}_{L^{q_2}}} {w_2(\lambda_1)^{q_2}}\right)^{1/q_2}.
\end{align*}
Since $q_1\leq p_2$, we infer that
\begin{align*}
\left(\sum_{\lambda_1\in \Lambda_1} \frac{\|f_1\|^{p_2}_{L^{q_1}(\lambda_1+U)}}{w_1(\lambda_1)^{p_2}}\right)^{1/p_2} &\leq A_1A_3\left(\sum_{\lambda_1\in \Lambda_1}\|f_1\|^{p_2}_{L^{q_1}_{1/w_1}(\lambda_1+U)}\right)^{1/p_2} \\
&\leq A_1 A_3\|f_1\|_{L^{q_1}_{1/w_1}} \leq A_1A_3.
\end{align*}
Furthermore,
\begin{align*}
&\sum_{\lambda_1\in \Lambda_1} \frac{\|\mathcal{F}^{-1}f_2 T_{-\lambda_1}\widehat{\tilde{\chi}}_2\|^{q_2}_{L^{q_2}}} {w_2(\lambda_1)^{q_2}}\\
&\leq A^{q_2}_2 \int_{\R^n} \frac{|\mathcal{F}^{-1}f_2(\xi)|^{q_2}}{\check{w}_2(\xi)^{q_2}} \sum_{\lambda_1\in \Lambda_1}|\widehat{\tilde{\chi}}_2(\xi+\lambda_1)|^{q_2} (1+|\xi+\lambda_1|)^{\kappa_2q_2}\dxi  \\
&\leq A^{q_2}_2A^{q_2}_4\|f_2\|^{q_2}_{\mathcal{F}L^{q_2}_{1/\check{w}_2}} \leq A^{q_2}_2A^{q_2}_4.
\end{align*}
Plugging these bounds into the above estimate for $I'$, we deduce that $I'\leq \varepsilon/2$. Analogously, we find that $I''\leq \varepsilon/2$. Hence,
$$
\sup_{f_1\in K_1}\sup_{f_2\in K_2}\left|\left\langle f_1\otimes f_2, \sum_{\lambda_1\in \Lambda'_1} T_{\lambda_1}\chi_1\otimes g_{\lambda_1}- \sum_{\lambda_1\in \Lambda''_1} T_{\lambda_1}\chi_1\otimes g_{\lambda_1} \right\rangle\right|\leq \varepsilon,
$$
from which the statement and therefore also the desired inclusion follows. Finally, we prove that
$(L^{p_1}_{w_1}\widehat{\otimes}_{\epsilon} \mathcal{F}L^{p_2}_{w_2})^{B_0}_d(\Lambda)\subseteq  \theta_{-B_0}c_{0,w_1w_2}(\Lambda_1;(\mathcal{F}L^{p_2})^0_d(\Lambda_2))$. Let $\psi$, $\psi_1$ and $\psi_2$ be as in the second part of the proof of part $(i)$. Pick $\tilde{\chi}_1,\chi_2\in\DD(U)\backslash\{0\}$ such that $\chi_1=\tilde{\chi}_1\ast\tilde{\chi}_1\ast\tilde{\chi}_1\in\DD(U)\backslash\{0\}$ and set $\chi=\chi_1\otimes \chi_2\in\DD(U\times U)\backslash\{0\}$. Choose $\tilde{\chi}_1$ and $\chi_2$ such that $(\theta_{B_0}(\psi_2),\overline{\chi})_{L^2}=1$. Corollary \ref{complemented} implies that
\begin{equation}\label{equ-for-equ-of-seq-s1}
c=R_{\check{\psi}_2}(S_{\chi}(c)),\qquad c\in(L^{p_1}_{w_1}\widehat{\otimes}_{\epsilon} \mathcal{F}L^{p_2}_{w_2})^{B_0}_d(\Lambda).
\end{equation}
Arguing as in the proof of part $(i)$, we see that it suffices to show that
\begin{equation}\label{equ-for-prov-eqpps}
S_{\chi}(c)\ast_{B_0}\psi_1\in C_{0,w_1w_2}(\mathcal{F}L^{p_2}),\qquad c\in (L^{p_1}_{w_1}\widehat{\otimes}_{\epsilon} \mathcal{F}L^{p_2}_{w_2})^{B_0}_d(\Lambda).
\end{equation}
Without loss of generality, we may assume that $w_1$ and $w_2$ are continuous. Then, (cf. \cite[Section 3.1]{ryan})
\begin{equation}\label{eps-prod-repr}
C_{0,w_1}(\mathcal{F}L^{p_2}_{w_2}) = C_{0,w_1} \widehat{\otimes}_\epsilon \mathcal{F}L^{p_2}_{w_2}.
\end{equation}
Given a continuous polynomially bounded weight function $w$ on $\R^n$, we denote by $J_w$ the isometrical isomorphism $J_w: C_0\rightarrow C_{0,w}, \, \varphi \mapsto \varphi/w$. Then, ${}^tJ_w: (C_{0,w})'\rightarrow \mathcal{M}^1$ is an isometrical isomorphism. Set $\tilde{\chi}=\tilde{\chi}_1\otimes\chi_2\in\DD(U\times U)\backslash\{0\}$. Let $c\in c_{00}(\Lambda)$ be arbitrary. For all $f_1\in (C_{0,w_1})'$ and $f_2\in (\mathcal{F}L^{p_2}_{w_2})'$, it holds that
\begin{equation}\label{est-for-change-of-spa-to-inf}
\langle f_1\otimes f_2, S_{\chi}(c)\rangle=\langle (f_1\ast\check{\tilde{\chi}}_1\ast\check{\tilde{\chi}}_1)\otimes f_2, S_{\tilde{\chi}}(c)\rangle.
\end{equation}
By Young's inequality, we infer that
\begin{align*}
&\|(f_1\ast\check{\tilde{\chi}}_1)\ast\check{\tilde{\chi}}_1\|_{L^{q_1}_{1/w_1}} \leq A_1 \|\check{\tilde{\chi}}_1\|_{L^{q_1}_{(1+ |\, \cdot \, |)^{\kappa_1}}} \|f_1\ast\check{\tilde{\chi}}_1\|_{L^1_{1/w_1}} \\
&\leq  A^2_1 \|\check{\tilde{\chi}}_1\|_{L^{1}_{(1+ |\, \cdot \, |)^{\kappa_1}}} \|\check{\tilde{\chi}}_1\|_{L^{q_1}_{(1+ |\, \cdot \, |)^{\kappa_1}}} \| {}^tJ_{w_1} f_1\|_{\mathcal{M}^1} =  A^2_1 \|\check{\tilde{\chi}}_1\|_{L^{1}_{(1+ |\, \cdot \, |)^{\kappa_1}}} \|\check{\tilde{\chi}}_1\|_{L^{q_1}_{(1+ |\, \cdot \, |)^{\kappa_1}}}  \|f_1\|_{(C_{0,w_1})'}.
\end{align*}
Hence, in view of \eqref{eps-prod-repr}, \eqref{est-for-change-of-spa-to-inf} and the fact that $c_{00}(\Lambda)$ is dense in $(L^{p_1}_{w_1}\widehat{\otimes}_{\epsilon} \mathcal{F}L^{p_2}_{w_2})^{B_0}_d(\Lambda)$ (Corollary \ref{cesaro-cor}), we deduce that the mapping
$$
(L^{p_1}_{w_1}\widehat{\otimes}_{\epsilon} \mathcal{F}L^{p_2}_{w_2})^{B_0}_d(\Lambda)\rightarrow C_{0,w_1}( \mathcal{F}L^{p_2}_{w_2}),\, c\mapsto S_{\chi}(c),
$$
is well-defined and continuous. Consequently, to prove \eqref{equ-for-prov-eqpps}, it suffices to show that  the mapping
 $$
C_{0,w_1}(\mathcal{F}L^{p_2}_{w_2}) \rightarrow C_{0,w_1w_2}(\mathcal{F}L^{p_2}),\, F\mapsto F\ast_{B_0}\psi_1,
$$
is well-defined and continuous. Let $F \in \SSS(\R^{2n})$ be arbitrary. Note that (cf.\ \eqref{equ-for-con-with-gff})
\begin{align*}
\operatorname{id}\widehat{\otimes}\mathcal{F}^{-1}(F\ast_{B_0}\psi_1)(t,\eta)&= \iint_{\R^{2n}} F(x,\xi)\psi(t-x)\widehat{\psi}(t-\eta)e^{2\pi i \xi\cdot \eta} \dx\dxi\\
&=\widehat{\psi}(t-\eta)\int_{\R^n} \operatorname{id}\widehat{\otimes}\mathcal{F}^{-1}(F)(x,\eta) \psi(t-x)\dx.
\end{align*}
We infer that
\begin{align*}
& \| F \ast_{B_0} \psi_1 \|_{L^\infty_{w_1w_2}(\mathcal{F}L^{p_2})} \\
&\leq \sup_{t \in \R^n} w_1(t)w_2(t)\left ( \int_{\R^n} |\widehat{\psi}(t-\eta)|^{p_2} \left(\int_{\R^n}|\operatorname{id}\widehat{\otimes}\mathcal{F}^{-1}(F)(x,\eta)| |\psi(t-x)|\dx \right)^{p_2}  {\rm d}\eta\right)^{1/p_2}\\
&\leq A_1A_2\|\psi\|_{L^{\infty}_{(1+|\, \cdot \,|)^{\kappa_1}}}\|\widehat{\psi}\|_{L^{\infty}_{(1+|\, \cdot \,|)^{\kappa_2}}} \times \\
&\phantom{\leq} \sup_{t \in \R^n} \left ( \int_{\R^n} \left(\int_{t-U}|\operatorname{id}\widehat{\otimes}\mathcal{F}^{-1}(F)(x,\eta)|w_1(x) \dx \right)^{p_2}  w_2(\eta)^{p_2} {\rm d}\eta\right)^{1/p_2}\\
&\leq A_1A_2 |U|^{1/q_2}\|\psi\|_{L^{\infty}_{(1+|\, \cdot \,|)^{\kappa_1}}}\|\widehat{\psi}\|_{L^{\infty}_{(1+|\, \cdot \,|)^{\kappa_2}}} \times \\
&\phantom{\leq} \sup_{t \in \R^n} \left ( \int_{\R^n} \int_{t-U}|\operatorname{id}\widehat{\otimes}\mathcal{F}^{-1}(F)(x,\eta)|^{p_2}w_1(x)^{p_2}   w_2(\eta)^{p_2} \dx {\rm d}\eta\right)^{1/p_2}\\
&\leq A_1A_2 |U|\|\psi\|_{L^{\infty}_{(1+|\, \cdot \,|)^{\kappa_1}}}\|\widehat{\psi}\|_{L^{\infty}_{(1+|\, \cdot \,|)^{\kappa_2}}} \| \operatorname{id}\widehat{\otimes}\mathcal{F}^{-1}(F)\|_{L^\infty_{w_1}(L^{p_2}_{w_2})}\\
&= A_1A_2 |U|\|\psi\|_{L^{\infty}_{(1+|\, \cdot \,|)^{\kappa_1}}}\|\widehat{\psi}\|_{L^{\infty}_{(1+|\, \cdot \,|)^{\kappa_2}}} \| F\|_{L^\infty_{w_1}(\mathcal{F}L^{p_2}_{w_2})}.
\end{align*}
The statement now follows from the density of $\SSS(\R^{2n})$  in $C_{0,w_1}(\mathcal{F}L^{p_2}_{w_2})$.
\end{proof}

\begin{corollary}
Let $\Lambda_1$ and $\Lambda_2$ be two lattices in $\R^n$ and let $w$ be a polynomially bounded weight function on $\R^n$. Then,
\begin{itemize}
\item[$(i)$] $(L^p_w\widehat{\otimes}_{\pi} L^2)^{B_0}_d(\Lambda_1\times\Lambda_2)=\ell^1_w(\Lambda_1;\ell^2(\Lambda_2))$ if $1 \leq p \leq 2$,
\item[$(ii)$] $(L^p_w\widehat{\otimes}_{\epsilon} L^2)^{B_0}_d(\Lambda_1\times \Lambda_2)= c_{0,w}(\Lambda_1;\ell^2(\Lambda_2))$ if $2 \leq p < \infty$,
\end{itemize}
topologically.
\end{corollary}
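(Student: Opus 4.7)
The plan is to deduce both identities as direct corollaries of Proposition \ref{seq-spa-for-the-pi-prod} by specialising the second factor. Take $p_1 = p$, $w_1 = w$, $p_2 = 2$, and $w_2 \equiv 1$. The condition $p_1^{-1} + p_2^{-1} \geq 1$ in part $(i)$ of Proposition \ref{seq-spa-for-the-pi-prod} becomes $1/p + 1/2 \geq 1$, equivalent to $p \leq 2$, which matches the hypothesis of $(i)$. The condition $p_1^{-1} + p_2^{-1} \leq 1$ in part $(ii)$ of Proposition \ref{seq-spa-for-the-pi-prod} becomes $1/p + 1/2 \leq 1$, equivalent to $p \geq 2$, which matches the hypothesis of $(ii)$.

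Two trivial identifications convert the resulting right-hand sides into the desired form. First, by Plancherel's theorem the Fourier transform is an isometric automorphism of $L^2$, so $L^2 = \mathcal{F} L^2$ isometrically (as subspaces of $\SSS'(\R^n)$). This allows the replacement of $L^2$ by $\mathcal{F}L^2$ on the left-hand side, putting Proposition \ref{seq-spa-for-the-pi-prod} in play. Second, $L^2$ is a solid TMIB, so Example \ref{examples-sequences}$(i)$ gives
\[
(\mathcal{F}L^2)^0_d(\Lambda_2) = (L^2)^0_d(\Lambda_2) = (L^2)_d(\Lambda_2) = \ell^2(\Lambda_2)
\]
topologically.

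Finally, the operator $\theta_{-B_0}$ appearing in Proposition \ref{seq-spa-for-the-pi-prod} multiplies each entry of a sequence $c \in \C^{\Lambda_1 \times \Lambda_2}$ by the unimodular factor $e^{-2\pi i B_0 \lambda \cdot \lambda}$. Since the norms on both $\ell^1_w(\Lambda_1; \ell^2(\Lambda_2))$ and $c_{0,w}(\Lambda_1; \ell^2(\Lambda_2))$ depend only on the moduli $|c_{\lambda_1,\lambda_2}|$, we have $\theta_{-B_0} \ell^1_w(\Lambda_1; \ell^2(\Lambda_2)) = \ell^1_w(\Lambda_1; \ell^2(\Lambda_2))$ and $\theta_{-B_0} c_{0,w}(\Lambda_1; \ell^2(\Lambda_2)) = c_{0,w}(\Lambda_1; \ell^2(\Lambda_2))$ isometrically. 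Substituting these identifications into Proposition \ref{seq-spa-for-the-pi-prod} yields $(i)$ and $(ii)$ respectively.

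There is no real obstacle here; the corollary is genuinely just a convenient specialisation of the preceding proposition, with the only points to verify being the Plancherel identification $L^2 = \mathcal{F}L^2$ and the remark that unimodular factors do not affect $\ell^1$- or $c_0$-type norms.
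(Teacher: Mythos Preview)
Your proposal is correct and matches exactly what the paper intends: the corollary is stated without proof, as an immediate specialisation of Proposition~\ref{seq-spa-for-the-pi-prod} with $p_2=2$, $w_2\equiv 1$, using Plancherel to identify $L^2=\mathcal{F}L^2$, Example~\ref{examples-sequences}$(i)$ to get $(\mathcal{F}L^2)^0_d(\Lambda_2)=\ell^2(\Lambda_2)$, and the observation that $\theta_{-B_0}$ acts by unimodular multipliers and hence leaves the solid discrete spaces unchanged.
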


\section{Gabor frame characterisations of modulation spaces defined via TMIB and DTMIB}\label{sect-Gabor}
\subsection{The short-time Fourier transform and Gabor frames on $\mathcal{S'}(\R^n)$} \label{sect-STFT} We start with a brief discussion of the short-time Fourier transform (STFT) and Gabor frames on $L^2(\R^n)$; we refer to the book \cite{Grochenig} for more information. Recall that for $z= (x,\xi) \in \R^{2n}$ we write $\pi(z) = M_\xi T_x$. The STFT of $f \in L^2(\R^n)$ with respect to a window $\psi \in L^2(\R^n)$ is defined as
$$
V_\psi f(x,\xi) :=  (f, \pi(x,\xi)\psi)_{L^2} = \int_{\R^n} f(t) \overline{\psi(t-x)}e^{-2\pi i \xi \cdot t} \dt.
$$
Then, $V_\psi f \in L^2(\R^{2n})\cap C(\R^{2n})$ and the following orthogonality relation holds
\begin{equation}
\label{ortho}
(V_\psi f, V_{\gamma} \varphi)_{L^2} =  (f, \varphi)_{L^2}(\gamma,\psi)_{L^2},
\end{equation}
where also $\varphi, \gamma \in L^2(\R^n)$. Furthermore, it holds that
\begin{equation}
V_\psi ( \pi(x,\xi) f) = T^\sigma_{(x,\xi)} V_\psi f.
\label{shift-eq-1}
\end{equation}
Let $\psi, \gamma \in  L^2(\R^n)$ be such that $(\gamma,\psi)_{L^2} \neq 0$. The equations \eqref{ortho} and \eqref{shift-eq-1} imply the reproducing formula
\begin{equation}
V_{\varphi} f =  \frac{1}{(\gamma,\psi)_{L^2} }  V_{\psi} f \# V_{\varphi} \gamma,
\label{reproducing-1}
\end{equation}
where $f,\varphi \in L^2(\R^n)$.

Next, we discuss Gabor frames. Fix a lattice $\Lambda$  in $\R^{2n}$. Let $\psi \in L^2(\R^n)$ and suppose that the \emph{analysis operator}
$$
C_\psi:  L^2(\R^n) \rightarrow \ell^2(\Lambda), \, f \mapsto (V_\psi f(\lambda))_{\lambda \in \Lambda},
$$
is continuous; this is e.g.\  the case if $\psi \in W(L^\infty,L^1)$ \cite[Corollary 6.2.3]{Grochenig}. The adjoint operator of $C_\psi$, called the \emph{synthesis operator}, is given by
$$
D_\psi: \ell^2(\Lambda) \rightarrow L^2(\R^n), \, c \mapsto \sum_{\lambda \in \Lambda} c_\lambda \pi(\lambda) \psi,
$$
and the series $\sum_{\lambda \in \Lambda} c_\lambda \pi(\lambda)\psi$ converges unconditionally in $L^2(\R^n)$. Let $\psi, \gamma \in  L^2(\R^n)$ be windows such that   $C_\psi$ and  $C_\gamma$ are continuous. We define
$$
S_{\psi,\gamma} := D_\gamma \circ C_\psi: L^2(\R^n) \rightarrow L^2(\R^n)
$$
and call $(\psi,\gamma)$ a \emph{pair of dual windows} on $\Lambda$ if $S_{\psi,\gamma} = \operatorname{id}_{L^2(\R^n)}$. In such a case, also $S_{\gamma, \psi} = \operatorname{id}_{L^2(\R^n)}$ and thus
$$
f = \sum_{\lambda \in \Lambda} V_\psi f(\lambda) \pi(\lambda) \gamma = \sum_{\lambda \in \Lambda} V_\gamma f(\lambda) \pi(\lambda) \psi, \qquad f \in L^2(\R^n),
$$
where both series converge unconditionally in $L^2(\R^n)$.

Given a window $\psi \in L^2(\R^n)$, the set of time-frequency shifts
$$
\mathcal{G}(\Lambda, \psi) := \{ \pi(\lambda) \psi \, | \, \lambda \in \Lambda \}
$$
 is called a \emph{Gabor frame} if there are $A,B > 0$ such that
$$
A \| f \|_{L^2} \leq \| (V_\psi f (\lambda))_{\lambda \in \Lambda} \|_{\ell^2(\Lambda)}  \leq B \| f \|_{L^2}, \qquad f \in L^2(\R^n).
$$
Then, $S = S_{\psi,\psi}$ is a bounded positive invertible linear operator on $L^2(\R^n)$. Set $\gamma^\circ = S^{-1} \psi \in L^2(\R^n)$. Since $S$ and $\pi$ commute on $\Lambda$, $(\psi, \gamma^\circ)$ is a pair of dual windows on $\Lambda$. We call $\gamma^\circ$ the \emph{canonical dual window} on $\Lambda$ of $\psi$.

We now discuss the STFT and Gabor frames on  $\mathcal{S}'(\R^n)$ (cf.\ \cite[Section 11.2]{Grochenig} and  \cite{K-P-S-V}). Let $\psi \in \mathcal{S}(\R^n)$. Then, the mapping
$V_\psi : \mathcal{S}(\R^n) \rightarrow \mathcal{S}(\R^{2n})$ is continuous. The STFT of $f \in \mathcal{S}'(\R^n)$ with respect to $\psi$ is defined as
\begin{equation}
V_\psi f(x,\xi) := \langle f,\pi(x,-\xi)  \overline{\psi} \rangle, \qquad (x,\xi) \in \R^{2n}.
\label{def-STFT-dual}
\end{equation}
Then, $V_\psi f \in C(\R^{2n})$ and $\| V_\psi f\|_{L^{\infty}_{(1+| \, \cdot \, |)^{-N} }}<\infty$ for some $N \in \N$. If $A \subset  \mathcal{S}'(\R^n)$  is bounded, then the previous estimate holds uniformly for $f \in A$. Since $\mathcal{S}'(\R^n)$ is bornological, this implies that the mapping $V_\psi : \mathcal{S}'(\R^n) \rightarrow \mathcal{S}'(\R^{2n})$ is continuous. Let $\psi, \gamma \in  \mathcal{S}(\R^n)$ be such that $(\gamma,\psi)_{L^2} \neq 0$. As $L^2(\R^n)$ is dense in $\mathcal{S}'(\R^n)$, \eqref{ortho} implies that
\begin{equation}
\langle f, \varphi \rangle = \frac{1}{(\gamma,\psi)_{L^2}}\iint_{\R^{2n}} V_\psi f(x,\xi) V_{\overline{\gamma}} \varphi(x,-\xi) \dx\dxi, \qquad \varphi \in \mathcal{S}(\R^n),
\label{inversion-dual}
\end{equation}
whereas \eqref{reproducing-1} yields that
\begin{equation}
V_{\varphi} f =  \frac{1}{(\gamma,\psi)_{L^2} }  V_{\psi} f \# V_{\varphi}\gamma, \qquad  f \in \mathcal{S}'(\R^n),\, \varphi\in\SSS(\R^n).
\label{reproducing-2}
\end{equation}
Clearly, \eqref{shift-eq-1} remains true for $f\in\SSS'(\R^n)$ and $\psi\in\SSS(\R^n)$.\\
\indent Finally, we discuss Gabor frames on $\mathcal{S}'(\R^n)$. Let $\psi \in  \mathcal{S}(\R^n)$. The mappings
$$
C_\psi:  \mathcal{S}'(\R^n) \rightarrow \mathcal{S}'_d(\Lambda) , \, f \mapsto (V_\psi f(\lambda))_{\lambda \in \Lambda},
$$
and
$$
D_\psi: \mathcal{S}'_d(\Lambda) \rightarrow \mathcal{S}'(\R^n), \, c \mapsto \sum_{\lambda \in \Lambda} c_\lambda \pi(\lambda) \psi,
$$
are well-defined and continuous, and the series $\sum_{\lambda \in \Lambda} c_\lambda \pi(\lambda)\psi$ is absolutely summable in $\mathcal{S}'(\R^n)$. Let $\psi, \gamma \in  \mathcal{S}(\R^n)$  be such that $(\psi, \gamma)$ is a pair of dual windows on $\Lambda$. Then,
$$
f = \sum_{\lambda \in \Lambda} V_\psi f(\lambda) \pi(\lambda) \gamma = \sum_{\lambda \in \Lambda} V_\gamma f(\lambda) \pi(\lambda) \psi, \qquad f \in \mathcal{S}'(\R^n),
$$
where both series are absolutely summable in $\mathcal{S}'(\R^n)$.

\subsection{Continuity of the Gabor frame operators on modulation spaces associated to TMIB and DTMIB} Fix a TMIB or a DTMIB $F$ on $\R^{2n}$.
We start by defining the modulation space associated to $F$ \cite{D-P-P-V}.
\begin{definition}
Let $\psi \in \mathcal{S}(\R^n)\backslash \{0\}$. We define the modulation space associated to $F$ as
$$
\mathcal{M}^F := \{ f \in \mathcal{S}'(\R^n) \, | \, V_\psi f \in F \}
$$
and endow it with the norm $\|f \|_{\mathcal{M}^F} := \| V_\psi f \|_F$.
\end{definition}
We sometimes employ the alternative notation $\mathcal{M}[F]$ for $\mathcal{M}^F$. The space $\mathcal{M}^F$ is a Banach space whose definition is independent of the window $\psi\in\SSS(\R^n)\backslash\{0\}$ and different non-zero windows induce equivalent norms on $\mathcal{M}^F$ \cite[Corollary 4.5 and  Corollary 4.6]{D-P-P-V}. Furthermore, if $F$ is a TMIB, then $\mathcal{M}^F$ is a TMIB \cite[Theorem 4.8$(i)$]{D-P-P-V}. We define
$$
\check{F}_2 := \{ f \in \mathcal{S}'(\R^{2n}) \, | \, \check{f}_2(x,\xi) := f(x,-\xi) \in F\}
$$
and endow it with the norm $\| f \|_{\check{F}_2} := \| \check{f}_2\|_F$. It is clear that $\check{F}_2$ is again a TMIB (DTMIB). The following duality result holds.
\begin{proposition} \label{modulation-duality} \cite[Theorem 4.8$(iii)$]{D-P-P-V}
Suppose that $F$ is a TMIB. Then, $\mathcal{M}^{F'} = (\mathcal{M}^{\check{F}_2})'$. Moreover, for $\psi, \gamma \in \mathcal{S}(\R^n)$ with  $(\gamma, \psi)_{L^2} \neq 0$, it holds that (cf.\ \eqref{inversion-dual})
$$
\langle f, g \rangle = \frac{1}{(\gamma, \psi)_{L^2}} \langle V_\psi f(x,\xi), V_{\overline{\gamma}} g(x,-\xi) \rangle, \qquad f \in \mathcal{M}^{F'},\, g \in  \mathcal{M}^{\check{F}_2}.
$$
\end{proposition}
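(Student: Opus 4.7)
Fix $\psi,\gamma\in\SSS(\R^n)$ with $(\gamma,\psi)_{L^2}\neq 0$. Since $F$ is a TMIB, the reflection $\Phi\mapsto\check\Phi_2$ is an isometric isomorphism $F\to\check F_2$, so $\check F_2$ and hence $\mathcal{M}^{\check F_2}$ are TMIB. Both $V_\psi:\mathcal{M}^{F'}\to F'$ and $V_{\overline\gamma}:\mathcal{M}^{\check F_2}\to\check F_2$ are continuous. Writing $h_g(x,\xi):=V_{\overline\gamma}g(x,-\xi)\in F$ for $g\in\mathcal{M}^{\check F_2}$, the form
\[
 B(f,g):=\frac{1}{(\gamma,\psi)_{L^2}}\langle V_\psi f,h_g\rangle_{F',F}
\]
is continuous on $\mathcal{M}^{F'}\times\mathcal{M}^{\check F_2}$ and induces a continuous linear map $T:\mathcal{M}^{F'}\to(\mathcal{M}^{\check F_2})'$, $T(f)(g):=B(f,g)$.

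\textbf{Agreement on Schwartz functions and injectivity.} For $g\in\SSS(\R^n)\subset\mathcal{M}^{\check F_2}$ the associated $h_g$ lies in $\SSS(\R^{2n})$, so the $(F',F)$-pairing reduces to the $(\SSS',\SSS)$-pairing, and \eqref{inversion-dual} applied to $f\in\mathcal{M}^{F'}\hookrightarrow\SSS'(\R^n)$ gives $B(f,g)=\langle f,g\rangle_{\SSS',\SSS}$. Consequently, $T(f)=0$ forces $f$ to annihilate $\SSS(\R^n)$, so $f=0$ and $T$ is injective.

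\textbf{Surjectivity --- the main obstacle.} Let $L\in(\mathcal{M}^{\check F_2})'$. The continuous embedding $\SSS(\R^n)\hookrightarrow\mathcal{M}^{\check F_2}$ (available since $\check F_2$ is a TMIB) defines $f\in\SSS'(\R^n)$ via $\langle f,\varphi\rangle:=L(\varphi)$; the task is to show $V_\psi f\in F'$. Unfolding the $\SSS'$-definition of the STFT and applying Fubini yields, for every $\Phi\in\SSS(\R^{2n})$,
\[
 \langle V_\psi f,\Phi\rangle_{\SSS',\SSS}=L\bigl(V_{\overline\psi}^{\,\ast}\check\Phi_2\bigr),\qquad V_{\overline\psi}^{\,\ast}G:=\iint G(x,\xi)\,\pi(x,\xi)\overline\psi\,\dx\dxi.
\]
The crux is to bound $\|V_{\overline\psi}^{\,\ast}\check\Phi_2\|_{\mathcal{M}^{\check F_2}}$ by $\|\Phi\|_F=\|\check\Phi_2\|_{\check F_2}$. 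Via the reproducing identity $V_{\overline\psi}(V_{\overline\psi}^{\,\ast}G)=G\#V_{\overline\psi}(\overline\psi)$ this reduces to showing that twisted convolution by the fixed Schwartz function $V_{\overline\psi}(\overline\psi)\in\SSS(\R^{2n})$ is bounded on $\check F_2$ --- a statement that is genuine analytic content in the non-solid setting. It can be obtained by discretising the convolution integral via a partition of unity subordinate to a lattice and invoking the synthesis-type bound of Proposition \ref{lemma-synthesis}, together with the amalgam-space inclusion of Corollary \ref{inclusion-amalgam-E}. Granted this bound, $|\langle V_\psi f,\Phi\rangle|\le C\|L\|\,\|\Phi\|_F$ on $\SSS(\R^{2n})$ and hence, by density, on all of $F$, so $V_\psi f\in F'$. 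Finally $T(f)=L$ on $\SSS(\R^n)$ by construction and hence, by density of $\SSS$ in the TMIB $\mathcal{M}^{\check F_2}$, on all of $\mathcal{M}^{\check F_2}$; the open mapping theorem then promotes $T$ to a topological isomorphism.
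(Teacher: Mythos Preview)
The paper does not give its own proof of this proposition; it is quoted verbatim from \cite[Theorem 4.8$(iii)$]{D-P-P-V}. So there is no ``paper's proof'' to compare against, only the cited reference.

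Your sketch is essentially correct and follows the natural strategy (define the pairing via the STFT, check it restricts to the $\mathcal{S}'\times\mathcal{S}$ pairing, then recover surjectivity by pulling back a functional and bounding the STFT). The one place where you overcomplicate matters is the step you flag as ``the main obstacle'': boundedness of the twisted convolution $G\mapsto G\#\Psi$ on a TMIB for a fixed $\Psi\in\mathcal{S}(\R^{2n})$. This does not require discretising via a partition of unity or invoking Proposition~\ref{lemma-synthesis} and Corollary~\ref{inclusion-amalgam-E}. It follows in one line from the integral representation
\[
G\#\Psi=\iint_{\R^{2n}}\Psi(x,\xi)\,T^{B_0^t}_{(x,\xi)}G\,\dx\dxi
\]
together with $\|T^{B_0^t}_{(x,\xi)}\|_{\mathcal{L}(\check F_2)}\le\rho^{B_0^t}_{\check F_2}(x,\xi)$ being polynomially bounded and $\Psi$ being Schwartz; this is precisely the estimate \eqref{bound-twisted-F} that the paper writes down (for $F$) in the remark following Proposition~\ref{modulation-duality}. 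With that simplification your argument goes through cleanly: the bound $|\langle V_\psi f,\Phi\rangle|\le C\|L\|\,\|\Phi\|_F$ on $\mathcal{S}(\R^{2n})$ extends by density to $F$, giving $V_\psi f\in F'$, and density of $\mathcal{S}(\R^n)$ in the TMIB $\mathcal{M}^{\check F_2}$ finishes surjectivity.
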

\noindent Consequently, $\mathcal{M}^F$ is a DTMIB if $F$ is so.

\begin{remark}
The identity \eqref{shift-eq-1} implies that
$$
\| \pi(x,\xi) \|_{\mathcal{L}(\mathcal{M}^F)} \leq \rho^{B_0}_F(x,\xi), \qquad (x,\xi) \in \R^{2n}.
$$
Hence, \cite[Theorem 12.1.9]{Grochenig}  gives the  continuous inclusion
\begin{equation}
\label{inclusion-rho}
\mathcal{M}^{1,1}_{\rho^{B_0}_F} \subseteq \mathcal{M}^F,
\end{equation}
which improves \cite[Corollary 4.11]{D-P-P-V}.
\end{remark}

For the main result of this article we need to enlarge the class of windows for the STFT of the elements of $\mathcal{M}^F$ in such a way that its range consists of continuous functions on $\R^{2n}$. Given a Banach space $X \subset \mathcal{S}'(\R^n)$, we define the Banach space $\overline{X} :=\{f\in\SSS'(\R^n)\,|\, \overline{f}\in X\}$ with norm $\|f\|_{\overline{X}}:=\|\overline{f}\|_X$. Assume that $F$ is a TMIB. For $f \in \mathcal{M}^F$ and $\psi \in \overline{\mathcal{M}[(F')\check{}_2]}$ we define
$$
V_\psi f(x,\xi) := {}_{\mathcal{M}^F}\langle f,\pi(x,-\xi)  \overline{\psi} \rangle_{\mathcal{M}^{(F')\check{}_2}}.
$$
Similarly, for $f \in \mathcal{M}^{F'}$ and $\psi \in \overline{\mathcal{M}[\check{F}_2]}$ we define
$$
V_\psi f(x,\xi) := {}_{\mathcal{M}^{F'}}\langle f,\pi(x,-\xi)  \overline{\psi} \rangle_{\mathcal{M}^{\check{F}_2}}.
$$
Obviously, these definitions coincide with the one given in \eqref{def-STFT-dual} if $\psi \in \mathcal{S}(\R^n)$. Since $(T^{B_0}_{(x,-\xi)} G)\check{}_2 = T^{-B_0}_{(x,\xi)} \check{G}_2$ for all $G \in \mathcal{S}'(\R^{2n})$, Proposition \ref{modulation-duality} together with \eqref{shift-eq-1} imply that the sesquilinear mappings
$$
\mathcal{M}^F\times \overline{\mathcal{M}[(F')\check{}_2]} \rightarrow C_{1/\check{\rho}^{B_0}_F}(\R^{2n}),\, (f,\psi)\mapsto V_{\psi} f
$$
and
$$
 \mathcal{M}^{F'}\times \overline{\mathcal{M}[\check{F}_2]} \rightarrow C_{1/\check{\rho}^{B_0}_{F'}}(\R^{2n}),\, (f,\psi)\mapsto V_{\psi} f
$$
are well-defined and continuous. Now suppose again that $F$ is either a TMIB or a DTMIB. Since
\begin{gather}
\overline{V_\psi f} = (V_{\overline{\psi}} \overline{f})\check{}_2,\qquad f \in \mathcal{S}'(\R^n),\,\psi \in \mathcal{S}(\R^n),\label{short-timr-conj-sec-variab}\\
\overline{W(\mathcal{F}L^1_{\check{\widetilde{\nu}}_F}, L^1_{\check{\omega}_F})} = W(\mathcal{F}L^1_{\widetilde{\nu}_F}, L^1_{\check{\omega}_F}),\label{conjugate-amalgam}
\end{gather}
Corollary \ref{inclusion-amalgam-E} implies that $\mathcal{M}[W(\mathcal{F}L^1_{\check{\widetilde{\nu}}_F}, L^1_{\check{\omega}_F})] \subset  \overline{\mathcal{M}[(F')\check{}_2]}$ continuously  if $F$ is a TMIB and $\mathcal{M}[W(\mathcal{F}L^1_{\check{\widetilde{\nu}}_F}, L^1_{\check{\omega}_F})] \subset  \overline{\mathcal{M}[(F_0)\check{}_2]}$ continuously if $F$ is a DTMIB with  $F = F'_0$, where $F_0$ is a TMIB. Hence, the sesquilinear mapping
\begin{equation}\label{continu-map-extens-f}
\mathcal{M}^F\times \mathcal{M}[W(\mathcal{F}L^1_{\check{\widetilde{\nu}}_F}, L^1_{\check{\omega}_F})] \rightarrow C_{1/\check{\rho}^{B_0}_F}(\R^{2n}),\, (f,\psi)\mapsto V_{\psi} f
\end{equation}
is well-defined and continuous.

\begin{remark} Although we will not need this, we would like to point out that it is  also possible to enlarge the class of windows for the STFT of the elements in $\mathcal{M}^F$ in such a way that its range is in $F$:
\begin{proposition}
The sesquilinear mapping $\mathcal{M}^F\times \SSS(\R^n)\rightarrow F$, $(f,\psi)\mapsto V_{\psi} f$, uniquely extends to a continuous sesquilinear mapping $\mathcal{M}^F\times M^{1,1}_{\check{\rho}^{B^t_0}_F}\rightarrow F$, $(f,\psi)\mapsto V_{\psi} f$.
\end{proposition}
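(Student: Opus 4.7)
The plan is to exploit the reproducing formula \eqref{reproducing-2}. Fix $\gamma\in\SSS(\R^n)$ with $\|\gamma\|_{L^2}=1$. For $f\in\mathcal{M}^F$ and $\varphi\in\SSS(\R^n)$, formula \eqref{reproducing-2} yields
$$
V_\varphi f = V_\gamma f \,\#\, V_\varphi \gamma = V_\varphi \gamma \ast_{B_0^t} V_\gamma f,
$$
where the second equality is the identity $G\ast_{B_0} H = H \ast_{B_0^t} G$ listed right after Definition \ref{def-B0}. By definition of $\mathcal{M}^F$, $V_\gamma f\in F$ and $\|V_\gamma f\|_F = \|f\|_{\mathcal{M}^F}$.

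The heart of the argument is to recognise the resulting twisted convolution as an $F$-valued integral
$$
V_\varphi \gamma \ast_{B_0^t} V_\gamma f = \int_{\R^{2n}} V_\varphi \gamma(z)\, T^{B_0^t}_z V_\gamma f\, \dz,
$$
interpreted as a Bochner integral when $F$ is a TMIB and as a Pettis integral with respect to the weak-$\ast$ topology on $F$ when $F$ is a DTMIB (\emph{cf}.\ Lemma \ref{weak-Bochner}). Measurability of the integrand comes from \eqref{twisted-orbit}, and the bound \eqref{twisted-wf} gives $\|T^{B_0^t}_z V_\gamma f\|_F \leq \rho^{B_0^t}_F(z)\|V_\gamma f\|_F$, so the integral converges absolutely as soon as $V_\varphi \gamma \in L^1_{\rho^{B_0^t}_F}(\R^{2n})$, in which case
$$
\|V_\varphi f\|_F \leq \|V_\varphi \gamma\|_{L^1_{\rho^{B_0^t}_F}}\|V_\gamma f\|_F.
$$

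Finally, the standard identity $|V_\varphi \gamma(z)| = |V_\gamma \varphi(-z)|$ gives $\|V_\varphi \gamma\|_{L^1_{\rho^{B_0^t}_F}} = \|V_\gamma \varphi\|_{L^1_{\check{\rho}^{B_0^t}_F}}$, and the latter is, by definition of the modulation norm with window $\gamma$, equivalent to $\|\varphi\|_{M^{1,1}_{\check{\rho}^{B_0^t}_F}}$. Combining everything,
$$
\|V_\varphi f\|_F \leq C\, \|\varphi\|_{M^{1,1}_{\check{\rho}^{B_0^t}_F}}\|f\|_{\mathcal{M}^F}, \qquad f\in\mathcal{M}^F,\ \varphi\in\SSS(\R^n).
$$
Since $\SSS(\R^n)$ is dense in $M^{1,1}_{\check{\rho}^{B_0^t}_F}$, the initial sesquilinear map extends uniquely to a continuous sesquilinear map $\mathcal{M}^F \times M^{1,1}_{\check{\rho}^{B_0^t}_F} \to F$. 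The main technical point is the Pettis integrability in the DTMIB setting, which is handled by the weak-$\ast$ continuity of $z\mapsto T^{B_0^t}_z V_\gamma f$ together with Lemma \ref{weak-Bochner}; the TMIB case reduces to a routine Bochner integration argument.
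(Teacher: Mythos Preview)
Your proof is correct and follows essentially the same approach as the paper's: both fix a Gaussian-type window $\gamma$, invoke the reproducing formula $V_\varphi f = V_\gamma f \,\#\, V_\varphi\gamma$, reinterpret the twisted convolution as an $F$-valued (Bochner or weak-$\ast$ Pettis) integral to obtain the bound $\|V_\varphi f\|_F \le \|V_\gamma f\|_F\,\|V_\varphi\gamma\|_{L^1_{\rho^{B_0^t}_F}}$, rewrite the latter norm via the STFT symmetry $|V_\varphi\gamma(z)|=|V_\gamma\varphi(-z)|$ as $\|V_\gamma\varphi\|_{L^1_{\check\rho^{B_0^t}_F}}$, and conclude by density of $\SSS(\R^n)$ in $M^{1,1}_{\check\rho^{B_0^t}_F}$. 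The only cosmetic difference is that the paper records the identity $V_\psi\gamma=(\theta_{-B_0}(\overline{V_\gamma\psi}))\check{}$ explicitly, whereas you use the equivalent modulus identity directly.
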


\begin{proof}
For all $G \in \mathcal{S}'(\R^{2n})$ and $\Phi \in  \mathcal{S}(\R^{2n})$, it holds that
\begin{equation}
G \# \Phi = \iint_{\R^{2n}} \Phi(x,\xi) T^{B^t_0}_{(x,\xi)} G \dx \dxi,
\end{equation}
where the integral should be interpreted as an $\mathcal{S}'(\R^n)$-valued Pettis integral with respect to the weak-$\ast$ topology on $\mathcal{S}'(\R^n)$. If $G \in F$, then the above integral exists as an $F$-valued Bochner integral if $F$ is a TMIB and as an $F$-valued Pettis integral if $F$ is a DTMIB. Consequently, $G \# \Phi  \in F$ and
\begin{equation}
\| G \# \Phi \|_F  \leq \|G\|_F \|\Phi\|_{L^1_{\rho^{B^t_0}_F }}, \qquad G \in F,\, \Phi  \in  \mathcal{S}(\R^{2n}).
\label{bound-twisted-F}
\end{equation}
Now fix $\gamma \in \mathcal{S}(\R^n)$ with $\|\gamma\|_{L^2} = 1$. Let $f \in \mathcal{M}^F$ and $\psi \in \mathcal{S}(\R^n)$ be arbitrary. Note that $V_{\psi}\gamma= (\theta_{-B_0}(\overline{V_{\gamma}\psi}))\check{}$. Hence, the reproducing formula \eqref{reproducing-2} and \eqref{bound-twisted-F} yield that
$$
\| V_\psi f \|_F = \| V_\gamma f \# V_\psi \gamma \|_F \leq \|  V_\gamma f \|_{F}  \|V_\psi \gamma \|_{L^1_{\rho^{B^t_0}_F }} = \|  V_\gamma f \|_{F}   \| V_\gamma \psi \|_{L^1_{\check{\rho}^{B^t_0}_F }},
$$
whence the result follows from the density of $\SSS(\R^n)$ in $M^{1,1}_{\check{\rho}^{B^t_0}_F}$.
\end{proof}

\end{remark}
Fix a lattice $\Lambda$ in $\R^{2n}$ and a bounded open neighbourhood $U$ of the origin in $\R^{2n}$ such that the family of sets $\{ \lambda + U \, | \, \lambda \in \Lambda \}$ is pairwise disjoint. We are ready to establish the continuity of the analysis and synthesis operators on $\mathcal{M}^F$. Recall that $F^\sigma_d(\Lambda) = F^{B_0}_d(\Lambda)$.

\begin{theorem}\label{analysis-coorbit} ${}$
\begin{itemize}
\item[$(i)$] Let $\psi \in  \mathcal{M}[W(\mathcal{F}L^1_{\check{\widetilde{\nu}}_F}, L^1_{\check{\omega}_F})]$. The mapping $C_{\psi} :  \mathcal{M}^F \rightarrow F^\sigma_{d}(\Lambda)$ is well-defined and continuous.
\item[$(ii)$] Let $\psi \in  \mathcal{M}[W(\mathcal{F}L^1_{\widetilde{\nu}_F}, L^1_{\omega_F})]$. For each $c\in F^{\sigma}_d(\Lambda)$ the series $\sum_{\lambda \in \Lambda} c_\lambda \pi(\lambda)\psi$ is C\'esaro summable in $\mathcal{M}^F$ if $F$ is a TMIB and C\'esaro summable with respect to the weak-$\ast$ topology on $\mathcal{M}^F$ if $F$ is a DTMIB (cf.\ Proposition \ref{modulation-duality}). Furthermore, the mapping $D_{\psi} :  F^\sigma_{d}(\Lambda) \rightarrow \mathcal{M}^F$ is well-defined and continuous.
\end{itemize}
\end{theorem}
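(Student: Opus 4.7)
The proof will exploit the general principle that the STFT converts the analysis and synthesis operators for Gabor frames into the twisted-convolution sampling operator (Proposition \ref{sampling}) and the twisted-translation synthesis operator (Proposition \ref{lemma-synthesis}), respectively. This reduces everything to the results on $F^{\sigma}_d(\Lambda) = F^{B_0}_d(\Lambda)$ developed in Section \ref{sect-seq}.

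For part $(i)$, I would fix an auxiliary $\gamma \in \SSS(\R^n)$ with $\|\gamma\|_{L^2}=1$. For $\psi\in\SSS(\R^n)$ the reproducing formula \eqref{reproducing-2}, used after interchanging the roles of $\psi$ and $\varphi$, gives
\begin{equation}
V_\psi f = V_\gamma f \, \# \, V_\psi \gamma, \qquad f\in \SSS'(\R^n).
\label{plan-rep}
\end{equation}
Since $W(\mathcal{F}L^1_{\check{\widetilde{\nu}}_F}, L^1_{\check{\omega}_F})$ is a TMIB, its associated modulation space contains $\SSS(\R^n)$ densely, so I would extend \eqref{plan-rep} to all $\psi \in \mathcal{M}[W(\mathcal{F}L^1_{\check{\widetilde{\nu}}_F}, L^1_{\check{\omega}_F})]$ by continuity: the LHS depends continuously on $\psi$ by \eqref{continu-map-extens-f}, while the RHS inherits continuity from the twisted convolution on $F \times (\theta_{-B_0}W(\mathcal{F}L^1_{\widetilde{\nu}_F},L^1_{\check{\omega}_F}))\check{}\ $. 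The direct calculation
$$\check{V_\psi \gamma}(x,\xi) = e^{-2\pi i x\cdot \xi}\,\overline{V_\gamma\psi(x,\xi)}, \qquad \text{hence}\quad \theta_{B_0}(\check{V_\psi \gamma}) = \overline{V_\gamma\psi},$$
combined with \eqref{conjugate-amalgam}, converts the hypothesis $V_\gamma\psi \in W(\mathcal{F}L^1_{\check{\widetilde{\nu}}_F},L^1_{\check{\omega}_F})$ into the membership $V_\psi\gamma\in(\theta_{-B_0}W(\mathcal{F}L^1_{\widetilde{\nu}_F},L^1_{\check{\omega}_F}))\check{}\ $ needed to invoke Proposition \ref{sampling}. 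Sampling both sides of \eqref{plan-rep} at $\Lambda$ then gives exactly $C_\psi(f)\in F^\sigma_d(\Lambda)$ together with a continuity bound of the form $\|C_\psi(f)\|_{F^\sigma_d(\Lambda)}\leq C\|V_\psi\gamma\|\cdot\|f\|_{\mathcal{M}^F}$.

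For part $(ii)$, fix again $\gamma\in\SSS(\R^n)$ with $\|\gamma\|_{L^2}=1$; by hypothesis $V_\gamma\psi \in W(\mathcal{F}L^1_{\widetilde{\nu}_F}, L^1_{\omega_F})$, which is the target space in Proposition \ref{lemma-synthesis}. Since $\psi\in \mathcal{M}^F$ (Corollary \ref{inclusion-amalgam-E} applied inside the modulation-space definition), the finite C\'esaro partial sums $C_N(c) = \sum_{|m_j|<N}\prod_j(1-|m_j|/N)c_{A_\Lambda m}\pi(A_\Lambda m)\psi$ of $\sum_\lambda c_\lambda \pi(\lambda)\psi$ are well-defined elements of $\mathcal{M}^F$, and \eqref{shift-eq-1} yields
$$V_\gamma(C_N(c)) = \sum_{|m_j|<N}\prod_{j}(1-|m_j|/N)\,c_{A_\Lambda m}\,T^{B_0}_{A_\Lambda m} V_\gamma\psi,$$
i.e.\ the C\'esaro partial sum of $\sum_\lambda c_\lambda T^{B_0}_\lambda V_\gamma\psi$. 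By Corollary \ref{equ-for-bilinear-mapping-sum}, this converges to $S_{V_\gamma\psi}(c)\in F$ in norm (TMIB case) or weakly-$\ast$ (DTMIB case), with the bound coming from Proposition \ref{lemma-synthesis}. Because $V_\gamma$ embeds $\mathcal{M}^F$ topologically into $F$ in the TMIB case, and compatibly with the weak-$\ast$ structures via Proposition \ref{modulation-duality} in the DTMIB case, this lifts to a unique element $D_\psi(c)\in\mathcal{M}^F$ with $V_\gamma D_\psi(c) = S_{V_\gamma\psi}(c)$, to which $C_N(c)$ converges in the appropriate topology, giving the required continuity of $D_\psi$.

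The main technical obstacle is the bookkeeping required to extend the reproducing formula \eqref{plan-rep} to general $\psi$ and, in particular, to identify the precise space containing $V_\psi\gamma$: the appearance of both the twist $\theta_{B_0}$ and the complex conjugation explains why the hypotheses on $\psi$ in (i) and (ii) involve, respectively, $W(\mathcal{F}L^1_{\check{\widetilde{\nu}}_F}, L^1_{\check{\omega}_F})$ and $W(\mathcal{F}L^1_{\widetilde{\nu}_F}, L^1_{\omega_F})$ rather than a single common class. A secondary point requiring care is the DTMIB case, where $\SSS(\R^n)$ need not be dense in $\mathcal{M}^F$, so the density arguments and the convergence in part $(ii)$ must be handled via the weak-$\ast$ topology induced by the duality $\mathcal{M}^F = (\mathcal{M}^{\check{F}_2})'$.
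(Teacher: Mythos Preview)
Your proposal is correct and follows essentially the same approach as the paper: for $(i)$ you fix $\gamma\in\SSS(\R^n)$ with $\|\gamma\|_{L^2}=1$, extend the reproducing formula $V_\psi f = V_\gamma f \,\#\, V_\psi\gamma$ to general $\psi$ by density using \eqref{continu-map-extens-f} and \eqref{amalgam-twisted-conv}, identify $V_\psi\gamma \in (\theta_{-B_0}W(\mathcal{F}L^1_{\widetilde{\nu}_F},L^1_{\check{\omega}_F}))\check{}$ via \eqref{conjugate-amalgam}, and apply Proposition \ref{sampling}; for $(ii)$ you transfer the problem to $F$ via \eqref{shift-eq-1} and invoke Proposition \ref{lemma-synthesis} and Corollary \ref{equ-for-bilinear-mapping-sum}, handling the DTMIB case through Proposition \ref{modulation-duality}. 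This is exactly the paper's argument, with your version spelling out more explicitly the lift from $F$ back to $\mathcal{M}^F$ in part $(ii)$.
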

\begin{proof} $(i)$ Let $f \in \mathcal{M}^F$ be arbitrary. As $V_{\psi} f$ is continuous, we can evaluate it  at $\lambda\in\Lambda$. Pick $\gamma \in \mathcal{S}(\R^n)$ such that $\|\gamma\|_{L^2}=1$. Note that, by \eqref{conjugate-amalgam},
$$
V_{\psi}\gamma= (\theta_{-B_0}(\overline{V_{\gamma}\psi}))\check{}  \in (\theta_{-B_0}W(\mathcal{F}L^1_{\widetilde{\nu}_F}, L^1_{\check{\omega}_F}))\check{}.
$$
Since $\mathcal{S}(\R^n)$ is dense in $\mathcal{M}[W(\mathcal{F}L^1_{\check{\widetilde{\nu}}_F}, L^1_{\check{\omega}_F})]$, the reproducing formula \eqref{reproducing-2} and the continuity of the mappings in \eqref{amalgam-twisted-conv} and \eqref{continu-map-extens-f} imply that $V_\psi f = V_\gamma f \# V_\psi \gamma$. Hence, the result follows from  Proposition \ref{sampling}.\\
$(ii)$ In view of \eqref{shift-eq-1}, this is a consequence of Proposition \ref{lemma-synthesis} and Corollary \ref{equ-for-bilinear-mapping-sum} (and Proposition \ref{modulation-duality} if $F$ is a DTMIB).
\end{proof}

\begin{corollary}\label{cor-for-rep11}
Let $\psi \in \mathcal{M}[W(\mathcal{F}L^1_{\check{\widetilde{\nu}}_F}, L^1_{\check{\omega}_F})]\cap L^2$ and $\gamma \in\mathcal{M}[W(\mathcal{F}L^1_{\widetilde{\nu}_F}, L^1_{\omega_F})]\cap L^2$ be such that $(\psi,\gamma)$ is a pair of dual windows on $\Lambda$. Then,
\begin{equation}\label{equ-for-tms}
f = \sum_{\lambda \in \Lambda} V_{\psi} f(\lambda) \pi(\lambda) \gamma, \qquad f \in \mathcal{M}^F,
\end{equation}
where the series is C\'esaro summable in $\mathcal{M}^F$ if $F$ is a TMIB and C\'esaro summable with respect to the weak-$\ast$ topology on $\mathcal{M}^F$ if $F$ is a DTMIB. Furthermore, there are $A,B > 0$ such that
$$
A\| f \|_{\mathcal{M}^F} \leq \| (V_{\psi} f(\lambda))_{\lambda \in \Lambda} \|_{F^\sigma_{d}(\Lambda)}  \leq B \| f \|_{\mathcal{M}^F}, \qquad f \in \mathcal{M}^F.
$$
\end{corollary}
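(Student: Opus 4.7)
The upper frame bound is immediate: Theorem~\ref{analysis-coorbit}$(i)$ applied to $\psi$ yields a constant $B$ with $\|C_\psi f\|_{F^\sigma_d(\Lambda)}\le B\|f\|_{\mathcal{M}^F}$. The substantive content of the corollary is the reconstruction identity $f=D_\gamma(C_\psi f)$ in $\mathcal{M}^F$; once this is in hand, \eqref{equ-for-tms} (together with its C\'esaro summability) is exactly Theorem~\ref{analysis-coorbit}$(ii)$ applied to $c=C_\psi f$, and the lower bound follows from the continuity of $D_\gamma$, since $\|f\|_{\mathcal{M}^F}=\|D_\gamma(C_\psi f)\|_{\mathcal{M}^F}\le C\|C_\psi f\|_{F^\sigma_d(\Lambda)}$.

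To prove $D_\gamma(C_\psi f)=f$, I would first treat $f\in\SSS(\R^n)\subset L^2$. Since $(\psi,\gamma)$ is a pair of dual windows on $\Lambda$ in $L^2$, the series $\sum_{\lambda}V_\psi f(\lambda)\pi(\lambda)\gamma$ converges unconditionally to $f$ in $L^2$; hence its C\'esaro partial sums $g_N$ converge to $f$ in $L^2$, and in particular in $\sigma(\SSS',\SSS)$. On the other hand, Theorem~\ref{analysis-coorbit}$(ii)$ asserts that the same $g_N$ converge to $D_\gamma(C_\psi f)$ either in the norm of $\mathcal{M}^F$ (TMIB case) or in the weak-$\ast$ topology of $\mathcal{M}^F$ (DTMIB case); both variants imply convergence in $\sigma(\SSS',\SSS)$, the second because by Proposition~\ref{modulation-duality} the predual $\mathcal{M}^{(F_0)\check{}_2}$ contains $\SSS(\R^n)$. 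Uniqueness of limits then gives $D_\gamma(C_\psi f)=f$ in $\SSS'(\R^n)$, and therefore in $\mathcal{M}^F$.

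In the TMIB case the extension to arbitrary $f\in\mathcal{M}^F$ is routine: $\SSS(\R^n)$ is norm-dense in the TMIB $\mathcal{M}^F$ and $D_\gamma\circ C_\psi$ is a bounded endomorphism of $\mathcal{M}^F$ by Theorem~\ref{analysis-coorbit}, so the identity passes to the closure.

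The main obstacle is the DTMIB case $F=F'_0$, in which $\SSS(\R^n)$ is generally not norm-dense in $\mathcal{M}^F$. The plan is to use weak-$\ast$ density: because $\mathcal{M}^{(F_0)\check{}_2}$ is a TMIB, $\SSS(\R^n)$ is norm-dense in it, and the bipolar theorem yields that $\SSS(\R^n)$ is weak-$\ast$ dense in $\mathcal{M}^F=(\mathcal{M}^{(F_0)\check{}_2})'$. It then remains to show that $D_\gamma\circ C_\psi$ is weak-$\ast$-to-weak-$\ast$ continuous on $\mathcal{M}^F$. I would do this by exhibiting an explicit predual operator, pairing with an arbitrary $h\in\mathcal{M}^{(F_0)\check{}_2}$ and rewriting $\langle D_\gamma(C_\psi f),h\rangle$ as $\langle f,h\rangle$ by means of the already-established TMIB reconstruction of $h$ with the roles of $\psi$ and $\gamma$ interchanged. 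The delicate book-keeping is to verify that $(\gamma,\psi)$ fulfils the window hypotheses required on the predual, which uses the weight identities $\omega_{F_0}=\check{\omega}_F$ and $\nu_{F_0}=\nu_F$ together with the conjugation rule \eqref{conjugate-amalgam}. Once weak-$\ast$ continuity is secured, the $\SSS$-case identity extends from the weak-$\ast$ dense subspace $\SSS(\R^n)$ to all of $\mathcal{M}^F$, completing the proof.
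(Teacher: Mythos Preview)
Your proposal is correct and follows essentially the same route as the paper. Both arguments first verify $D_\gamma\circ C_\psi=\operatorname{id}$ on $\SSS(\R^n)$ (the paper asserts this in one line, you justify it via the $L^2$ reconstruction), extend to the TMIB case by norm density, and then handle the DTMIB case by duality: you phrase this as weak-$\ast$--to--weak-$\ast$ continuity obtained from a predual operator (the TMIB reconstruction of $h$ with the roles of $\psi$ and $\gamma$ interchanged), while the paper carries out the same computation concretely by testing against $\chi\in\SSS(\R^n)$, conjugating via \eqref{short-timr-conj-sec-variab}, and invoking the TMIB case. The ``delicate book-keeping'' you allude to is exactly what the conjugations in the paper's displayed formula encode.
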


\begin{proof} Note that $D_{\gamma}\circ C_{\psi}$ restricts to the identity on $\SSS(\R^n)$. Hence, if $F$ is a TMIB, the result follows from the density of $\SSS(\R^n)$ in $\mathcal{M}^F$ and Theorem \ref{analysis-coorbit}. Assume now that $F$ is a DTMIB. Theorem \ref{analysis-coorbit} and \eqref{short-timr-conj-sec-variab} imply that for all $\chi\in\SSS(\R^n)$ and $f\in\mathcal{M}^F$
$$
\langle D_{\gamma}C_{\psi}(f),\chi\rangle =\lim_{N\rightarrow\infty} \overline{\left\langle \overline{f}, \sum_{|m_j|<N}\left(1-\frac{|m_1|}{N}\right)\cdots \left(1-\frac{|m_{2n}|}{N}\right) V_{\gamma}\overline{\chi}(A_{\Lambda}m)\pi(A_{\Lambda} m)\psi\right\rangle},
$$
whence the claim follows from the part of the corollary about TMIB and Theorem \ref{analysis-coorbit}.
\end{proof}
We now give two remarks about the window classes employed in Theorem \ref{analysis-coorbit} and Corollary \ref{cor-for-rep11}.
\begin{remark}
Let $\omega$ and $\nu$ be submultiplicative polynomially bounded weight functions on $\R^{2n}$ and set $X=W(\mathcal{F}L^1_{\nu},L^1_{\omega})$. Then, $\omega_X(x,\xi) \leq C\omega(x,\xi)$ and $\nu_X(x,\xi) \leq C\nu(x,\xi)$. Hence, \eqref{twisted-wf} and \eqref{inclusion-rho} gives the inclusions
\begin{equation}
\label{mod-am-1}
M^{1,1}_{\check{\sigma}_F}\subseteq \mathcal{M}[W(\mathcal{F}L^1_{\check{\widetilde{\nu}}_F}, L^1_{\check{\omega}_F})]\quad \mbox{and}\quad M^{1,1}_{\sigma_F}\subseteq \mathcal{M}[W(\mathcal{F}L^1_{\widetilde{\nu}_F}, L^1_{\omega_F})],
\end{equation}
where $\sigma_F(x,\xi) = \omega_F(x,\xi)\tilde{\nu}_F(0,x)$. If $\nu_F(0, \, \cdot \,) = 1$, the above inequality and the inclusion $W(\mathcal{F}L^1, L^1_{\omega_F}) \subseteq  W(L^\infty, L^1_{\omega_F}) \subset  L^1_{\omega_F}$ imply that
\begin{equation}
\label{mod-am-2}
M^{1,1}_{\check{\omega}_F} = \mathcal{M}[W(\mathcal{F}L^1, L^1_{\check{\omega}_F})]\quad \mbox{and} \quad M^{1,1}_{\omega_F} = \mathcal{M}[W(\mathcal{F}L^1, L^1_{\omega_F})].
\end{equation}
By \eqref{mod-am-1}, we can take $\psi \in M^{1,1}_{\check{\sigma}_F}$ in Theorem \ref{analysis-coorbit}$(i)$ and $\psi \in M^{1,1}_{\sigma_F}$ in Theorem \ref{analysis-coorbit}$(ii)$; a similar statement holds for Corollary \ref{cor-for-rep11}. As mentioned in the introduction, if $F$ is solid, Theorem \ref{analysis-coorbit} and Corollary \ref{cor-for-rep11}  are known to hold true for the window class $M^{1,1}_{\max\{\omega_F,\check{\omega}_F\}}$ \cite{F-G1, Grochenig1991, Grochenig}. The equalities in \eqref{mod-am-2} imply that this remains valid for the larger class of TMIB and DTMIB $F$ for which $\nu_F(0, \, \cdot \,) = 1$; e.g $F =  E_1 \widehat{\otimes}_\tau E_{2}$,  $\tau = \pi$ or $\epsilon$, where $E_1$ is a TMIB on $\R^n$ and $E_2$ is a solid TMIB on $\R^n$, satisfy $\nu_F(0, \, \cdot \,) = 1$.
\end{remark}

\begin{remark}\label{windows-for-the special-latticeon-s}
For each $\varphi \in W(L^{\infty},L^1) \backslash \{ 0\}$, the system $\mathcal{G}(\varphi, a\Z^n \times b \Z^n)$ is a Gabor frame for $a,b > 0$ small enough \cite[Theorem 6.5.1]{Grochenig}. If $\varphi(x) = 2^{n/4} e^{-\pi x \cdot x}$ is the Gaussian, $\mathcal{G}(\varphi, a\Z^n \times b \Z^n)$ is a Gabor frame if and only if $ab < 1$ (cf.\ \cite[Theorem 7.5.3]{Grochenig}). If $\varphi \in \mathcal{S}(\R^n)$ and $\mathcal{G}(\varphi, a \Z^n \times b\Z^n)$ is a Gabor frame, then the canonical dual window $\gamma^0 = S^{-1}\varphi$ on $a \Z^n \times b\Z^n$ also belongs to $\mathcal{S}(\R^n)$ \cite[Corollary 13.5.4]{Grochenig} (see \cite{G-L} for a more refined version of this result).
\end{remark}

We end this article by giving two applications of Corollary \ref{cor-for-rep11}. The next result and various related statements were recently shown in \cite{f-p-p} via different methods.
\begin{corollary}\label{cor-mod-tensor}
Let $w_1$ and $w_2$ be two polynomially bounded weight functions on $\R^n$ and let $1 \leq p_1, p_2 < \infty$. Then,
\begin{itemize}
\item[$(i)$] $\mathcal{M}[L^{p_1}_{w_1}\widehat{\otimes}_{\pi} \mathcal{F}L^{p_2}_{w_2}] = \mathcal{M}[L^1_{w_1w_2}( \mathcal{F}L^{p_2})] =  W(L^{p_2},L^1_{w_1w_2})$  if $p^{-1}_1+p^{-1}_2 \geq  1$,
\item[$(ii)$] $\mathcal{M}[L^{p_1}_{w_1}\widehat{\otimes}_{\epsilon} \mathcal{F}L^{p_2}_{w_2}] =   \mathcal{M}[C_{0,w_1w_2}( \mathcal{F}L^{p_2})] = W(L^{p_2},C_{0,w_1w_2})$ if $p^{-1}_1+p^{-1}_2 \leq  1$,
\end{itemize}
topologically.
\end{corollary}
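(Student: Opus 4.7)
The plan is to combine the discrete-space identifications in Propositions \ref{seq-spa-for-the-pi-prod} and \ref{mixed-norm-TMIB}$(ii)$ with the Gabor frame characterisation from Corollary \ref{cor-for-rep11}. I will handle case $(i)$ in detail; case $(ii)$ is entirely parallel after replacing the $\pi$-topology by $\epsilon$, $\ell^1_{w_1w_2}$ by $c_{0,w_1w_2}$, and $L^1_{w_1w_2}(\mathcal{F}L^{p_2})$ by $C_{0,w_1w_2}(\mathcal{F}L^{p_2})$.

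For the leftmost equality, set $F_1 := L^{p_1}_{w_1}\widehat{\otimes}_\pi \mathcal{F}L^{p_2}_{w_2}$ and $F_2 := L^1_{w_1w_2}(\R^n_x;\mathcal{F}L^{p_2}(\R^n_\xi))$; both are TMIB on $\R^{2n}$. Pick a product lattice $\Lambda = \Lambda_1 \times \Lambda_2 \subset \R^{2n}$ and a Schwartz window $\psi$ (for instance the Gaussian with $\Lambda$ sufficiently dense) such that $\mathcal{G}(\psi,\Lambda)$ is a Gabor frame on $L^2(\R^n)$ and the canonical dual window $\gamma^\circ$ lies in $\SSS(\R^n)$; such a choice is available by Remark \ref{windows-for-the special-latticeon-s}. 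Proposition \ref{seq-spa-for-the-pi-prod}$(i)$ identifies $(F_1)^\sigma_d(\Lambda)$ with $\theta_{-B_0}\ell^1_{w_1w_2}(\Lambda_1;(\mathcal{F}L^{p_2})^0_d(\Lambda_2))$, while Proposition \ref{mixed-norm-TMIB}$(ii)$ applied with $E=\mathcal{F}L^{p_2}$ (whose modulation weight is $\nu_E=1$), $p=1$, $w=w_1w_2$ gives exactly the same description of $(F_2)^\sigma_d(\Lambda)$, so the two discrete spaces coincide with equivalent norms. Since $\psi,\gamma^\circ\in\SSS(\R^n)$ are admissible windows for both $F_1$ and $F_2$ in the sense of Corollary \ref{cor-for-rep11} (Schwartz functions are contained in every space $\mathcal{M}[W(\mathcal{F}L^1_\nu, L^1_\omega)]$ for polynomial weights $\nu,\omega$), that corollary delivers $\|f\|_{\mathcal{M}^{F_i}}\asymp\|C_\psi f\|_{(F_i)^\sigma_d(\Lambda)}$ for $i=1,2$, together with the common reconstruction $f=D_{\gamma^\circ}C_\psi f$ in $\SSS'(\R^n)$. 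The equality of discrete norms then forces $\mathcal{M}^{F_1}=\mathcal{M}^{F_2}$ topologically.

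For the rightmost equality $\mathcal{M}^{F_2}=W(L^{p_2},L^1_{w_1w_2})$ I argue by direct computation. Fix $\psi\in\SSS(\R^n)\setminus\{0\}$; since $V_\psi f(x,\cdot)=\mathcal{F}_t(f\cdot\overline{T_x\psi})$, we have
$$
\|V_\psi f(x,\cdot)\|_{\mathcal{F}L^{p_2}}=\|f\cdot T_x\overline{\psi}\|_{L^{p_2}},
$$
and integrating in $x$ against $w_1w_2$ yields
$$
\|f\|_{\mathcal{M}^{F_2}}=\int_{\R^n}\|f\cdot T_x\overline{\psi}\|_{L^{p_2}}\,w_1(x)w_2(x)\,\dx.
$$
The right-hand side is an amalgam-space norm with local component $L^{p_2}$ and global component $L^1_{w_1w_2}$, computed with the Schwartz window $\overline{\psi}$ rather than a window in $\DD(\R^n)$. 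Standard window invariance for amalgam spaces (cf.\ Subsection \ref{sub-section-amalgam} and \cite{feich,f-p-p}), which allows windows from sufficiently regular spaces containing $\SSS(\R^n)$, identifies this norm with the usual $W(L^{p_2},L^1_{w_1w_2})$-norm up to equivalence, completing the proof of $(i)$.

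The main technical point is the exact matching of the two twisted sequence spaces produced by separate propositions, with the $\theta_{\pm B_0}$ twists and the roles of $\Lambda_1,\Lambda_2$ kept in correct correspondence with the $x$- and $\xi$-variables of the STFT; once that bookkeeping is settled, the Gabor frame theorem delivers the first equality almost for free and the second is a direct Fourier-analytic identification resting on the flexibility of amalgam-space windows.
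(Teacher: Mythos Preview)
Your proof is correct and follows exactly the same route as the paper: the first equality is obtained by matching the discrete spaces from Propositions \ref{seq-spa-for-the-pi-prod} and \ref{mixed-norm-TMIB}$(ii)$ and invoking Corollary \ref{cor-for-rep11} (with Remark \ref{windows-for-the special-latticeon-s} supplying the Schwartz dual pair), while the second equality is the direct STFT/amalgam computation the paper declares ``straightforward'' and omits. Your explicit verification that $\nu_{\mathcal{F}L^{p_2}}=1$ (needed for Proposition \ref{mixed-norm-TMIB}$(ii)$) and your treatment of the Schwartz-window amalgam norm are the only additions, and they are sound.
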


\begin{proof}
In view of Corollary \ref{cor-for-rep11} (and Remark \ref{windows-for-the special-latticeon-s}), the topological identities
\begin{gather*}
\mathcal{M}[L^{p_1}_{w_1}\widehat{\otimes}_{\pi} \mathcal{F}L^{p_2}_{w_2}] =  \mathcal{M}[L^1_{w_1w_2}( \mathcal{F}L^{p_2})]  , \qquad  p^{-1}_1+p^{-1}_2 \geq  1, \\
\mathcal{M}[L^{p_1}_{w_1}\widehat{\otimes}_{\epsilon} \mathcal{F}L^{p_2}_{w_2}] = \mathcal{M}[C_{0,w_1w_2}( \mathcal{F}L^{p_2})] , \qquad  p^{-1}_1+p^{-1}_2 \leq  1,
\end{gather*}
follow from Proposition \ref{mixed-norm-TMIB} and Proposition \ref{seq-spa-for-the-pi-prod}. The proof of the other two identities is straightforward and we omit them.
\end{proof}
Corollary \ref{cor-for-rep11} (and Remark \ref{windows-for-the special-latticeon-s}) also imply that modulation spaces defined via TMIB satisfy the sequential approximation property \cite[Chapter 43]{kothe2}; we refer to \cite{drw} for more information on approximation properties for the classical modulation spaces $M^{p,q}_w$, $1\leq p,q<\infty$.
\begin{corollary}
Let $F$ be a TMIB on $\R^{2n}$. Then, $\mathcal{M}^F$ satisfies the sequential approximation property, that is, there exists a sequence of finite rank operators $(P_n)_{n\in\N}\subset (\mathcal{M}^F)'\otimes \mathcal{M}^F$  which converges to $\operatorname{id}_{\mathcal{M}^F}$ in $\mathcal{L}_p(\mathcal{M}^F)$, where $p$ stands for the topology of uniform convergence on precompact sets.
\end{corollary}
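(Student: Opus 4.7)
The plan is to exploit the Gabor expansion from Corollary \ref{cor-for-rep11} combined with the C\'esaro summation machinery developed in Section \ref{sect-seq}. I first fix a lattice $\Lambda$ in $\R^{2n}$ and windows $\psi,\gamma\in\SSS(\R^n)$ forming a pair of dual windows on $\Lambda$; their existence is guaranteed by Remark \ref{windows-for-the special-latticeon-s} (for instance, take $\psi$ a Gaussian with a sufficiently dense $\Lambda$ and $\gamma=S^{-1}\psi$).

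For each $N\in\Z_+$ I define
\[
P_Nf := \sum_{|m_j|<N}\Bigl(1-\tfrac{|m_1|}{N}\Bigr)\cdots\Bigl(1-\tfrac{|m_{2n}|}{N}\Bigr)\, V_\psi f(A_\Lambda m)\,\pi(A_\Lambda m)\gamma,\qquad f\in\mathcal{M}^F.
\]
Each functional $f\mapsto V_\psi f(A_\Lambda m)=\langle f,\pi(A_\Lambda m)\overline{\psi}\rangle$ is continuous on $\mathcal{M}^F$ (via the continuous embedding $\mathcal{M}^F\hookrightarrow\SSS'(\R^n)$ and $\pi(A_\Lambda m)\overline{\psi}\in\SSS(\R^n)$), while $\pi(A_\Lambda m)\gamma\in\SSS(\R^n)\subset\mathcal{M}^F$, so $P_N\in(\mathcal{M}^F)'\otimes\mathcal{M}^F$ is a finite rank operator. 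Corollary \ref{cor-for-rep11} then yields $P_Nf\to f$ in $\mathcal{M}^F$ for every $f\in\mathcal{M}^F$.

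The heart of the argument is to upgrade this pointwise convergence to convergence uniform on precompact sets, for which an equicontinuity bound $\sup_N\|P_N\|_{\mathcal{L}(\mathcal{M}^F)}<\infty$ will suffice. I factor $P_N=D_\gamma\circ M_N\circ C_\psi$, where $M_N:F^\sigma_d(\Lambda)\to F^\sigma_d(\Lambda)$ sends $(c_\lambda)_\lambda$ to the sequence vanishing off $\{A_\Lambda m:|m_j|<N\}$ and equal there to $\prod_j(1-|m_j|/N)c_{A_\Lambda m}$. Since $D_\gamma$ and $C_\psi$ are continuous by Theorem \ref{analysis-coorbit}, it suffices to bound $\sup_N\|M_N\|$. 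Mimicking the identity at the core of the proof of Theorem \ref{cesaro}, I pick $\chi\in\DD(U)\setminus\{0\}$ together with $\varphi\in\DD(U)$ with $\varphi\equiv1$ on $\operatorname{supp}\chi$, obtaining
\[
S_\chi(M_Nc)=S_\chi(c)\cdot h_N,\qquad h_N:=\sum_{|m_j|<N}\prod_j\Bigl(1-\tfrac{|m_j|}{N}\Bigr)T_{A_\Lambda m}\varphi,
\]
which by the multiplication module estimate of Subsection \ref{subs-conv-mult} yields
\[
\|M_Nc\|_{F^\sigma_d(\Lambda)}\leq\|h_N\|_{\mathcal{F}\mathcal{M}^1_{\widetilde{\nu}_F}}\,\|c\|_{F^\sigma_d(\Lambda)}.
\]
A direct calculation gives $\mathcal{F}^{-1}h_N(\xi)=F_N(A_\Lambda^t\xi)\mathcal{F}^{-1}\varphi(\xi)$ with $F_N$ the non-negative $2n$-dimensional Fej\'er kernel, and a periodisation argument based on $\|F_N\|_{L^1(\R^{2n}/\Z^{2n})}=1$ together with the rapid decay of $\mathcal{F}^{-1}\varphi$ delivers $\sup_N\|h_N\|_{\mathcal{F}\mathcal{M}^1_{\widetilde{\nu}_F}}<\infty$.

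With equicontinuity in hand, a standard $\varepsilon/3$-argument over a finite net turns pointwise convergence into uniform convergence on any precompact (equivalently, relatively compact) subset of the Banach space $\mathcal{M}^F$, whence $P_N\to\operatorname{id}_{\mathcal{M}^F}$ in $\mathcal{L}_p(\mathcal{M}^F)$. The main obstacle I anticipate is the uniform bound on the C\'esaro multipliers $M_N$; once the Fej\'er kernel estimate is in place, the remainder is soft functional analysis.
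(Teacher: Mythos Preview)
Your proposal is correct and matches the paper's intended (unwritten) argument: the C\'esaro means of the Gabor expansion from Corollary~\ref{cor-for-rep11} furnish the required finite rank approximants. One simplification: the ``main obstacle'' you anticipate---the uniform bound $\sup_N\|P_N\|_{\mathcal{L}(\mathcal{M}^F)}<\infty$---is in fact automatic, since pointwise convergence $P_Nf\to f$ on the Banach space $\mathcal{M}^F$ already forces equicontinuity by the uniform boundedness principle, so your explicit Fej\'er kernel estimate (while correct) is unnecessary.
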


\end{document}